\documentclass[11pt]{amsart}
\usepackage{amsmath,amssymb,latexsym,soul,cite,mathrsfs}

\usepackage{color,enumitem,graphicx}
\usepackage[colorlinks=true,urlcolor=blue,
citecolor=red,linkcolor=blue,linktocpage,pdfpagelabels,
bookmarksnumbered,bookmarksopen]{hyperref}
\usepackage[english]{babel}

\usepackage[left=2.9cm,right=2.9cm,top=2.8cm,bottom=2.8cm]{geometry}
\usepackage[hyperpageref]{backref}

\usepackage[colorinlistoftodos]{todonotes}
\makeatletter
\providecommand\@dotsep{5}
\def\listtodoname{List of Todos}
\def\listoftodos{\@starttoc{tdo}\listtodoname}
\makeatother

\numberwithin{equation}{section}
%\pagestyle{myheadings}
% \markboth{}{} \pretolerance=10000
%\def\lb{\lambda}
%\def\var{\varepsilon}
%\def\pil{\left<}
%\def\pir{\right>}

%\def\nd{\noindent}
%\def\thend{\rule{3mm}{3mm}}
%\def\Re{\mathbb{R}}

%\newtheorem{lem}{Lemma}
%\newtheorem{prop}{Proposition}
%\newtheorem{theo}{Theorem}
%\newtheorem{coro}{Corollary}
%\newtheorem{rem}{Remark}

%\newcommand{\fim}{\hfill\rule{2mm}{2mm}}
%\newcommand{\n}{{\noindent}}
%\newcommand{p}{\displaystyle}
%\newcommand{\R}{\mbox{${\rm{I\!R}}$}}
%\newcommand{\RN}{I\!\!R^N}
%\newcommand{\N}{\mbox{${\rm{I\!N}}$}}
%\newcommand{\R}{\mathrm{I\!R\!}}
%\newcommand{\mathcal N}{\mathrm{I\!N\!}}
%\newcommand{\C}{\mbox{${\rm{C\hspace{-1.8mm}\rule{0.3mm}{2.8mm}}}$}}
%\def\n{\hspace*{0em}}
%\newcommand{}{\displaystyle}
%\def\dis{\displaystyle}
%\def\theequation{\thesection.\arabic{equation}}
%\let\Section=\section

%\def\section{\setcounter{equation}{0}\Section}

\newtheorem{theorem}{Theorem}[section]
\newtheorem{proposition}[theorem]{Proposition}
\newtheorem{lemma}[theorem]{Lemma}
\newtheorem{corollary}[theorem]{Corollary}

\newtheorem{remark}{Remark}

\title[On a  SBP system with Sublinear and critical nonlinearities]
{Schr\"odinger-Bopp-Podolsky system with\\  sublinear  and critical nonlinearities:\\
solutions at negative energy levels\\
and asymptotic behaviour}

\author[H. M. Santos Damian]{Heydy M. Santos Damian}
\author[G. Siciliano]{Gaetano Siciliano}

\address[H. M. S. Damian]{\newline\indent Departamento de Matem\'atica
\newline\indent 
Instituto de Matem\'atica e Estat\'istica
\newline\indent 
 Universidade de S\~ao Paulo 
\newline\indent 
Rua do Mat\~ao 1010,  05508-090, S\~ao Paulo, SP, Brazil }
\email{\href{mailto:santosh@ime.usp.br}{santosh@ime.usp.br}, 
}

\address[G. Siciliano]{\newline\indent Dipartimento di Matematica
\newline\indent 
 Universit\`a degli Studi di Bari Aldo Moro
\newline\indent 
Via E. Orabona 4, 70125 Bari, Italy}
\email{\href{mailto:gaetano.siciliano@uniba.it}{gaetano.siciliano@uniba.it}}

%\thanks{ H. M. Santos Damian was supported by  CNPq grant n. 140423/2020-6.
%G. Siciliano was supported by Capes, CNPq, FAPDF Edital 04/2021 - Demanda Espont\^anea, 
%Fapesp grants no. 2022/16407-1 and 2022/16097-2 (Brazil),  
%PRIN PNRR, P2022YFAJH “Linear and Nonlinear PDEs: New directions and applications”, and 
%INdAM-GNAMPA (Italy).
%}

\subjclass[2020]
{
35A15, % Variational methods applied to PDEs
35J10, % Schr?odinger equation
35J50, % Variational methods for elliptic systems
35J61, %Semilinear elliptic equations
35Q60, %PDEs in connection with optics and electromagnetic theory
}
%\date{\today}
\keywords{Elliptic systems, Krasnoselki genus, multiplicity of  solutions}

\pretolerance10000

\begin{document}

\maketitle

\begin{abstract}
 We consider the following Schr\"odinger-Bopp-Podolsky system with critical and sublinear terms 
\begin{equation*}
\begin{cases}
   - \Delta u+ u+Q(x)\phi u= \vert u\vert^4 u+ \lambda K(x)\vert u \vert^{p-1}u&\mbox{ in }\ \mathbb{R}^3 \smallskip\\
  - \Delta \phi+ a^{2}\Delta^{2} \phi = 4\pi Q(x) u^{2}& \mbox{  in }\ \mathbb{R}^3. 
\end{cases}
\end{equation*}
Here $u,\phi:\mathbb {R}^{3}\rightarrow \mathbb{R}$ are the unknowns, $Q$  and $K$ are given functions
satisfying mild assumptions,  $a\geq0, \lambda>0$ are parameters and $p\in (0,1)$. 
  We  first show existence of  infinitely many solutions at negative energy  level, including the ground state,  when the parameter $\lambda$ is small. Then
 we give  general results concerning the structure of the set of solutions.
 
We show also the behaviour of the solutions as the parameters $a,\lambda$ tend to zero.
In particular the ground states solutions tends to a ground state solution of the Schr\"odinger-Poisson system as $a$
tends to zero.
\end{abstract}
%\maketitle

\bigskip

\begin{center}
\begin{minipage}{12cm}
\tableofcontents
\end{minipage}
\end{center}

\bigskip

\section{Introduction}
In the last years a great attention has been given to elliptic systems which describe the interaction between matter
and electromagnetic field. Probably the first paper which has addressed this interaction by means of the Gauge Theories
has been by Benci and Fortunato, \cite{BF}. The cited paper describes the interaction of the Schr\"odinger equation with the
Maxwell equations of the electromagnetism, and has motivated and inspired many authors
in studying similar systems, involving other equations for the matter field (as e.g. the Klein-Gordon or Chern-Simons equation)
and even coupled with other equations for the e.m. field (as the Born-Infeld, or Bopp-Podolsky).

\medskip

In this paper we study a system which describes the matter field of the Schr\"odinger equation  
with  critical and sublinear nonlinearities
 interacting with  its own electromagnetic field in  the Bopp-Podolsky theory of electromagnetism.
This type of system, introduced in \cite{dAvGS}, has been studied under different assumptions, also in many other recent papers as 
\cite{CFPT,AS,dG,MS, H2, H, JLCS, LDLP, P, RS,SS}.

Specifically, the system we are interested here
is the following
 \begin{equation}\label{eq2} 
\displaystyle \left\{
   \begin{array}{lll}
   - \Delta u+ u+ Q(x)\phi u= \vert u\vert^4 u+ \lambda K(x)\vert u \vert^{p-1}u \medskip &\quad  \text{ in } \mathbb R^{3},
   \ \ p\in (0,1)\\
    - \Delta \phi+ a^{2}\Delta^{2} \phi = 4\pi Q(x)u^{2}  &\quad \text{ in } \mathbb R^{3}.
   \end{array}
 \right. 
\end{equation}
It can be obtained, as in \cite{dAvGS}, looking  at standing waves solutions under a purely electrostatic field in the generalized electrodynamics of Bopp-Podolsky. 
In \eqref{eq2} the unknowns are $u,\phi:\mathbb R^{3} \to \mathbb R$;
the nonlinearity is made by a critical term and a
sublinear one  controlled by a parameter $\lambda>0$;
 $a\geq0$ is also a parameter, known as the Bopp-Podolsky parameter; the nontrivial  functions $Q, K$ are given in suitable Lebesgue spaces, and in particular we require that

\smallskip

\begin{enumerate}[label=(Q\arabic*), ref=Q\arabic*, start=0]
\item\label{Q}  $Q\in L^{s}(\mathbb R^{3})$ for some
     % $s\in[3/2,+\infty]$, 
     $s\in[2,+\infty)$; or $s=+\infty$ and for every $t>0$ it holds $ \textrm{meas}\{x\in \mathbb R^{3}: |Q(x)|\geq t \}<+\infty$,     \end{enumerate}
    
\smallskip

\begin{enumerate}[label=(K\arabic*), ref=K\arabic*, start=0]
   \item\label{W} $K\in L^{\frac{2}{1-p}}(\mathbb{R}^{3})%\cap L^{\frac{6}{5-p}}(\mathbb R^{3})
   $ and 
   $\text{meas} \{ x\in \mathbb R^{3} : K(x)>0 \}>0$.
  \end{enumerate}
  
  \medskip

We will look for solutions by means of  Variational Methods. In fact the equations in the system are 
the Euler-Lagrange equations of a suitable functional defined on an Hilbert space.
So before stating our results some preliminaries are in order, especially to introduce the functional 
spaces, fix some notations and recall few known facts.

\medskip

\subsubsection*{Preliminary facts}  \ \

\medskip

In the following  $|\cdot|_{q}$ will denote the usual $L^{q}$ norm in the whole space $\mathbb R^{3}$.
On the other hand, if the set is not the whole space, let us say $\Omega\varsubsetneq \mathbb R^{3}$,
we will write $|\cdot|_{L^{q}(\Omega)}$.

Let us introduce the space
$H^{1}(\mathbb{R}^{3}) $ which is a Hilbert space equipped with the following inner product and (squared) norm:
$$\langle u,v\rangle :=\displaystyle \int_{\mathbb{R}^3 } \nabla u \nabla v+ \int_{\mathbb R^{3}}uv,
\quad \Vert u \Vert^{2}=|\nabla u|_{2}^{2} +|u|_{2}^{2}.
$$

Let $\mathcal{D}$ be the completion of $C_{c}^{\infty}(\mathbb{R}^3)$ with respect to the (squared) norm 
$\Vert.\Vert_{\mathcal{D}} ^{2}= |\nabla \varphi|_{2}^{2} +a^{2}|\Delta\varphi|_{2}^{2}$ induced by the inner product 
    $$\langle \varphi,\psi \rangle_{\mathcal{D}}:=\displaystyle \int_{\mathbb{R}^3}\nabla \varphi \nabla \psi+a^{2}\int_{\mathbb{R}^3} \Delta \varphi \Delta \psi.
$$
Then $\mathcal{D}$ is an Hilbert space continuously embedded in $D^{1,2}(\mathbb{R}^3)$ and consequently in $L^{6}(\mathbb{R}^3)$.
As proved in \cite{dAvGS} it also  embeds continuously   in $L^{\infty}(\mathbb{R}^3)$
and moreover  it is
$$\mathcal{D} = \{\phi \in D^{1,2}(\mathbb{R}^3) : \Delta \phi \in L^2(\mathbb{R}^3)\}.$$
It is easy to see that  the critical points of the  $C^{1}$ functional  
 \begin{eqnarray*}
    \Phi_{a,\lambda}(u,\phi) &=&   \frac{1}{2}\Vert u \Vert^ {2}+\frac{1}{2} \int_{\mathbb{R}^3}Q(x)\phi u^2-\frac{1}{16\pi}\vert\nabla \phi \vert^{2}_{2}-\frac{a^{2}}{16\pi}\vert \Delta \phi\vert^{2}_{2}\\
    & -&  \frac{1}{6}\int_{\mathbb{R}^3}\vert u \vert^{6} - \frac{\lambda}{p+1}\int_{\mathbb{R}^3}K(x)\vert u\vert^{p+1}
\end{eqnarray*}
in $ H^{1}(\mathbb{R}^{3}) \times \mathcal{D}$ are weak solutions of \eqref{eq2}; indeed, if $(u,\phi)\in H^{1}(\mathbb{R}^{3})\times \mathcal{D}$ is a critical point of $\Phi_{a,\lambda}$ then for every $(v,\xi)\in H^{1}(\mathbb{R}^{3})\times \mathcal D$ we have
\begin{eqnarray*}
   0&=& \partial_{u}\Phi_{a,\lambda }(u,\phi)[v]=\langle u,v\rangle +\displaystyle \int_{\mathbb{R}^3}Q(x)\phi u v - \displaystyle \int_{\mathbb{R}^3} \vert u\vert^{4}uv-\lambda \int_{\mathbb{R}^3} K(x) \vert u \vert^{p-1}u v , \\
   0&=& \partial_{\phi}\Phi_{a,\lambda }(u,\phi)[\xi]=\frac{1}{2} \int_{\mathbb{R}^3}Q(x) u^{2} \xi -\frac{1}{8\pi}\int_{\mathbb{R}^3}\nabla \phi \nabla \xi -\frac{ a^{2}}{8\pi}  \int_{\mathbb{R}^3}\Delta \phi \Delta \xi,
\end{eqnarray*}
which is exactly the weak formulation of \eqref{eq2}.

By means of a standard argument by now (introduced in \cite{BF} and used successfully
in all the subsequent literature on this subject) we can reduce ourselves to find the critical points of a functional of a single variable.
To this aim, let

$$\mathcal{K}(x) = \frac{1-e^{-\frac{\vert x \vert }{a}}}{\vert x \vert}, \ \ \ x\in \mathbb R^{3}\setminus\{0\}, a>0$$
and recall the following result given in  \cite[Lemma 3.3]{dAvGS}.

\begin{lemma}\label{lem:general}
For all $y\in \mathbb{R}^3$, $\mathcal{K}(\cdot-y)$ solves in the sense of distributions 
$$-\Delta \phi + a^{2}\Delta^2\phi = \delta_{y}.$$
Furthermore:

\begin{enumerate}
    \item[(i)] if $f \in L^1_{loc}(\mathbb{R}^3)$ and, for a.e. $x \in \mathbb{R}^3$, the map $y \in \mathbb{R}^3 \mapsto \frac{f(y)}{\vert x-y\vert}$ is summable, then $\mathcal{K}*f \in L^1_{loc}(\mathbb{R}^3)$;
    \item[(ii)] if $f \in L^p(\mathbb{R}^3)$ with $1\leq p<\frac{3}{2}$, then $\mathcal{K}*f\in L^q(\mathbb{R}^3)$ for $q \in (\frac{3p}{3-2p},+\infty ]$.
    \end{enumerate}
    In both cases, $\mathcal{K}*f$ solves
    $$-\Delta \phi +a^{2} \Delta^2\phi = f,$$
      in the sense of distributions and we have the following distributional derivatives
    $$
    \nabla(\mathcal{K}*f) = (\nabla \mathcal{K})*f \quad
    \text{ and } \quad
 \Delta(\mathcal{K} * f) = (\Delta \mathcal{K})*f \quad \text{ a.e. in } \mathbb{R}^3.
 $$
\end{lemma}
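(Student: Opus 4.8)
The strategy is to verify the fundamental-solution property first, and then bootstrap the integrability and differentiation claims from it. The key observation is that $\mathcal{K}$ is, up to a harmless constant, the Green function for the operator $-\Delta + a^{2}\Delta^{2}$ on $\mathbb{R}^{3}$, and that this can be checked by an explicit Fourier computation. Indeed, taking Fourier transforms, the equation $-\Delta\phi + a^{2}\Delta^{2}\phi = \delta_{0}$ becomes $(|\xi|^{2} + a^{2}|\xi|^{4})\widehat{\phi}(\xi) = (2\pi)^{-3/2}$, so one expects $\widehat{\phi}(\xi) = (2\pi)^{-3/2}\bigl(|\xi|^{2}(1+a^{2}|\xi|^{2})\bigr)^{-1}$. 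Using the partial-fraction identity $\frac{1}{|\xi|^{2}(1+a^{2}|\xi|^{2})} = \frac{1}{|\xi|^{2}} - \frac{a^{2}}{1+a^{2}|\xi|^{2}}$, the first term inverts to (a multiple of) $1/|x|$ and the second to (a multiple of) $e^{-|x|/a}/|x|$; matching constants gives exactly $\mathcal{K}(x) = (1-e^{-|x|/a})/|x|$. Translating by $y$ gives the claim for $\mathcal{K}(\cdot - y)$. Since this identity is already established in \cite[Lemma 3.3]{dAvGS}, I would simply quote it, or reproduce the short Fourier argument above; this first step is essentially bookkeeping.

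Next, for part (i), I would write $(\mathcal{K}*f)(x) = \int_{\mathbb{R}^{3}} \mathcal{K}(x-y) f(y)\,dy$ and use the pointwise bound $0 \le \mathcal{K}(z) \le 1/|z|$, valid for all $z\neq 0$ since $1 - e^{-|z|/a}\in[0,1)$. Thus $|(\mathcal{K}*f)(x)| \le \int |f(y)|/|x-y|\,dy$, which by hypothesis is finite for a.e.\ $x$; a Fubini argument over any ball then shows $\mathcal{K}*f \in L^{1}_{loc}$. For part (ii), I would split $\mathcal{K} = \mathcal{K}_{1} + \mathcal{K}_{2}$ with $\mathcal{K}_{1}(z) = (1-e^{-|z|/a})/|z|\cdot \mathbf{1}_{\{|z|\le 1\}}$ bounded near the origin—indeed $\mathcal{K}_{1}\in L^{r}(\mathbb{R}^{3})$ for every $r\in[1,3)$ because near $0$ one has $1-e^{-|z|/a}\sim |z|/a$ so $\mathcal{K}$ is bounded there, and $\mathcal{K}_{1}$ has compact support—while $\mathcal{K}_{2}(z) = \mathcal{K}(z)\mathbf{1}_{\{|z|>1\}}\le |z|^{-1}\mathbf{1}_{\{|z|>1\}}\in L^{r}$ for every $r>3$. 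Applying Young's convolution inequality to each piece with $f\in L^{p}$, $1\le p<3/2$, and optimizing the exponents (the binding constraint coming from $\mathcal{K}_{1}$, i.e.\ from the local singularity, which forces the Sobolev-type exponent $\tfrac{3p}{3-2p}$ as a lower endpoint and leaves the upper end open at $+\infty$ thanks to $\mathcal{K}_{2}$ being in all high $L^{r}$) yields $\mathcal{K}*f\in L^{q}(\mathbb{R}^{3})$ for $q\in\bigl(\tfrac{3p}{3-2p},+\infty\bigr]$.

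Finally, for the distributional derivative formulas and the fact that in both cases $\mathcal{K}*f$ solves $-\Delta\phi + a^{2}\Delta^{2}\phi = f$, I would argue by approximation and duality. Testing $\mathcal{K}*f$ against $\varphi\in C_{c}^{\infty}(\mathbb{R}^{3})$ and using Fubini, $\langle -\Delta(\mathcal{K}*f)+a^{2}\Delta^{2}(\mathcal{K}*f),\varphi\rangle = \int f(y)\langle \mathcal{K}(\cdot-y), -\Delta\varphi + a^{2}\Delta^{2}\varphi\rangle\,dy = \int f(y)\varphi(y)\,dy$, where the inner equality is exactly the fundamental-solution property from the first step; the Fubini interchange is justified by the local integrability proven in (i)–(ii) together with the summability hypothesis. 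The identities $\nabla(\mathcal{K}*f) = (\nabla\mathcal{K})*f$ and $\Delta(\mathcal{K}*f) = (\Delta\mathcal{K})*f$ follow once one checks that $\nabla\mathcal{K}$ and $\Delta\mathcal{K}$ are themselves locally integrable (one computes $\nabla\mathcal{K}(z) = O(|z|^{-2})$ and $\Delta\mathcal{K}(z) = O(|z|^{-1})$ near the origin away from the Dirac mass, both in $L^{1}_{loc}$), so that the convolutions make sense and differentiation passes under the integral sign by dominated convergence on compact sets; one must take care that $\Delta\mathcal{K}$ is understood in the sense that excludes the $\delta$ at the origin, which is precisely why the resulting equation is $-\Delta\phi + a^{2}\Delta^{2}\phi = f$ rather than something with spurious point masses.

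I expect the main obstacle to be the careful handling of the local singularities of $\mathcal{K}$, $\nabla\mathcal{K}$ and especially $\Delta\mathcal{K}$: one must separate the genuine $L^{1}_{loc}$ part from the Dirac contribution, and justify all the Fubini and differentiation-under-the-integral interchanges using only the stated hypotheses on $f$. Everything else—the Fourier identity, Young's inequality, the duality computation—is routine once that bookkeeping is in place.
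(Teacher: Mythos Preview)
The paper does not prove this lemma: it is simply quoted from \cite[Lemma 3.3]{dAvGS}, so there is no in-paper argument to compare against. Your outline is a faithful reconstruction of how that result is actually obtained and would work with one correction.

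In part (ii) you have the roles of the two pieces reversed. Since $1-e^{-|z|/a}\sim |z|/a$ as $|z|\to 0$, the kernel $\mathcal{K}$ is \emph{bounded} near the origin, so $\mathcal{K}_{1}=\mathcal{K}\,\mathbf{1}_{\{|z|\le 1\}}$ lies in $L^{r}(\mathbb{R}^{3})$ for every $r\in[1,\infty]$, not merely $r<3$; consequently $\mathcal{K}_{1}*f\in L^{q}$ for all $q\in[p,\infty]$ and this piece imposes no constraint. The restriction $q>\tfrac{3p}{3-2p}$ comes from the tail $\mathcal{K}_{2}=\mathcal{K}\,\mathbf{1}_{\{|z|>1\}}\sim 1/|z|$, which lies in $L^{r}$ only for $r\in(3,\infty]$: Young's inequality with $1/q=1/p+1/r-1$ and $1/r<1/3$ then forces $1/q<1/p-2/3$, i.e.\ $q>\tfrac{3p}{3-2p}$. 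So the ``Sobolev-type'' lower endpoint is dictated by the decay at infinity, not by any local singularity (there is none). With this swap your Young-inequality argument goes through exactly as stated, and the rest of your proposal---the Fourier identification of the fundamental solution, the Fubini/duality computation for the distributional equation, and the $L^{1}_{loc}$ check for $\nabla\mathcal{K}$ and $\Delta\mathcal{K}$---is correct.
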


\medskip

By the Riesz Theorem there is a  unique solution in $\mathcal{D}$ of the second equation in (\ref{eq2}) which we denote by $\phi_u^{a}$.
Moreover since  $u\in H^{1}(\mathbb{R}^3)$, we have  
 $u^2 Q\in L^1(\mathbb{R}^3)$ as soon as $Q\in L^{s}(\mathbb R^{3})$ with $s\in [3/2,+\infty]$,
 so in particular under our assumption, and then $\phi_{u}^{a}\in L^{q}(\mathbb R^{3})$ for $q\in(3,+\infty]$. Finally the solution has an explicit expression
given by
\begin{equation}\label{eq:phi}
 \phi_u^{a}(x)=(\mathcal{K}*Qu^2)(x)=\int_{\mathbb{R}^3} \frac{1-e^{-\frac{\vert x-y \vert}{a}}}{4\pi \vert x-y \vert}Q(y) u^2(y)dy
 \in \mathcal D.
\end{equation}
Some properties  involving $\phi_{u}^{a}$ 
will be needed in the sequel, so we collect them here. 
The following result is proved in \cite[Lemma 3.4]{dAvGS}
and \cite[Lemma 1.2]{SS}, except for the last two convergences in \eqref{vii:lemmaphi}
that we give below. 
We denote with  $C_{0}(\mathbb R^{3})$  the space of continuous functions vanishing at infinity.

\begin{lemma}\label{lemmaphi}
%Assume \eqref{Q}. 
For all $u\in H^1(\mathbb{R}^3)$ we have:
%\begin{enumerate}[label=(\arabic{*}), ref=\arabic{*}]
%
\begin{enumerate}[label=(\roman{*}), ref=\roman{*}]
    \item \label{i:lemmaphi} for every $y\in \mathbb{R}^3, \phi^{a}_{u(.+y)}=\phi^{a}_{u}(.+y); $ \smallskip
 %   \item \label{ii:lemmaphi} $\phi^{a}_{u}\geq0;$\smallskip
    \item \label{iii:lemmaphi}for every 
    $s \in (3,+\infty], \phi^{a}_{u}\in L^{s}(\mathbb{R}^3)\cap C_{0}(\mathbb{R}^3);$ \smallskip
    \item \label{iv:lemmaphi} for every $s \in(3/2,+\infty], \nabla \phi^{a}_{u}\in L^{s}(\mathbb{R}^3)\cap C_{0}(\mathbb{R}^3);$ \smallskip
% %   \item[v)]$ \phi_u \in \mathcal{D}\hookrightarrow L^{\infty}(\mathbb{R}^3);$
    \item\label{v:lemmaphi}there is $C>0$ such that for all $a>0$, it is $\vert \phi^{a}_u \vert_{6}\leq C \Vert u \Vert^{2}$ and then
    $$ \|\phi_{u}^{a}\|_{\mathcal D}^{2} = 4\pi\int_{\mathbb R^{3}} Q(x)\phi^{a}_{u} u^{2} \leq C\|u\|^{4};$$
    \item\label{vi:lemmaphi}$\phi^{a}_{tu}=t^2\phi^{a}_u;$ \smallskip
    
    \item \label{viii:lemmaphi}$\phi^{a}_{u}$ is the unique minimizer of the functional 
$$\mathcal{E}_{u,a}(\phi)=\displaystyle \frac{1}{2}| \nabla \phi | ^{2}_{2}+\frac{a^{2}}{2}| \Delta \phi |^{2}_{2}-\int_{\mathbb{R}^3} Q(x) \phi u^2,\quad \phi\in \mathcal D;$$

%    \item\label{vii:lemmaphi} if $v_n \rightharpoonup v $ in $H^{1}(\mathbb{R}^3),$ then $\phi^{a}_{v_n}\rightharpoonup \phi^{a}_{v}$  in $\mathcal{D}$ and
%%
%% per questo serve la compattezza/proprieta di Q come in CVPDE con Heydy, ma qui non é usato.
%%
%%$$\int_{\mathbb{R}^3}Q(x)\phi^{a}_{v_n} v^2_n  \rightarrow \int_{\mathbb{R}^3}Q(x)\phi^{a}_{v} {v}^2, 
%%$$
%$$
%\int_{\mathbb{R}^3}Q(x)\phi^{a}_{v_n} v^{2}_n \varphi  \rightarrow \int_{\mathbb{R}^3}Q(x)\phi^{a}_{v} v^{2}\varphi , \ \forall \varphi \in C^{\infty}_{c}(\mathbb{R}^3) 
%$$
%$$
%\int_{\mathbb{R}^3}Q(x)\phi^{a}_{v_n} v_n \varphi  \rightarrow \int_{\mathbb{R}^3}Q(x)\phi^{a}_{v} {v}\varphi , \ \forall \varphi \in C^{\infty}_{c}(\mathbb{R}^3).
%$$

    \item\label{vii:lemmaphi} if $v_n \rightharpoonup v $ in $H^{1}(\mathbb{R}^3),$ then \smallskip
%
% per questo serve la compattezza/proprieta di Q come in CVPDE con Heydy, ma qui non é usato.
%
%$$\int_{\mathbb{R}^3}Q(x)\phi^{a}_{v_n} v^2_n  \rightarrow \int_{\mathbb{R}^3}Q(x)\phi^{a}_{v} {v}^2, 
%$$
\begin{enumerate}
\item[a)] $\phi^{a}_{v_n}\to \phi^{a}_{v}$  in $\mathcal{D}$, \smallskip
\item[b)] $\displaystyle 
\int_{\mathbb{R}^3}Q(x)\phi^{a}_{v_n} v^{2}_n   \longrightarrow \int_{\mathbb{R}^3}Q(x)\phi^{a}_{v} v^{2},  \smallskip
$
\item[c)]  $\displaystyle
\int_{\mathbb{R}^3}Q(x)\phi^{a}_{v_n} v^{2}_n \varphi  \longrightarrow \int_{\mathbb{R}^3}Q(x)\phi^{a}_{v} v^{2}\varphi , \ \ \forall \varphi \in C^{\infty}_{c}(\mathbb{R}^3), 
$ \smallskip
\item[d)]$\displaystyle
\int_{\mathbb{R}^3}Q(x)\phi^{a}_{v_n} v_n w   \longrightarrow \int_{\mathbb{R}^3}Q(x)\phi^{a}_{v} v w,  \ \ \forall w \in H^{1}(\mathbb{R}^3).
$
\end{enumerate}
%$$
%\textcolor{red}{
%\int_{\mathbb{R}^3}Q(x)\phi^{a}_{v_n} v_n \varphi  \longrightarrow \int_{\mathbb{R}^3}Q(x)\phi^{a}_{v} {v}\varphi , \ \ \forall \varphi \in C^{\infty}_{c}(\mathbb{R}^3),}
%$$

\end{enumerate}
\end{lemma}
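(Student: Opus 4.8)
As recalled above, everything except the convergences c) and d) in \eqref{vii:lemmaphi} is already contained in \cite{dAvGS,SS}, so the plan is to prove only these two. Throughout I fix a sequence $v_n\rightharpoonup v$ in $H^1(\mathbb R^3)$ and record the usual consequences: $\{v_n\}$ is bounded in $H^1(\mathbb R^3)$, hence in every $L^q(\mathbb R^3)$ with $q\in[2,6]$; $v_n\rightharpoonup v$ in $L^q(\mathbb R^3)$ for every $q\in[2,6]$; and $v_n\to v$ in $L^q_{loc}(\mathbb R^3)$ for $q\in[1,6)$ by compactness of the local Sobolev embeddings. I will also use item a), which is already available, i.e. $\phi^a_{v_n}\to\phi^a_v$ in $\mathcal D$; combined with the continuous embeddings $\mathcal D\hookrightarrow L^6(\mathbb R^3)$ and $\mathcal D\hookrightarrow L^\infty(\mathbb R^3)$ (the latter from \cite{dAvGS}) this upgrades to $\phi^a_{v_n}\to\phi^a_v$ both in $L^6(\mathbb R^3)$ and in $L^\infty(\mathbb R^3)$.

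To prove c), I would fix $\varphi\in C^\infty_c(\mathbb R^3)$, put $\Omega=\operatorname{supp}\varphi$ and $s'=s/(s-1)\in[1,2]$, and split
\[
\int_{\mathbb R^3}Q\phi^a_{v_n}v_n^2\varphi-\int_{\mathbb R^3}Q\phi^a_{v}v^2\varphi=\int_\Omega Q\varphi(\phi^a_{v_n}-\phi^a_v)v_n^2+\int_\Omega Q\varphi\,\phi^a_v(v_n^2-v^2).
\]
The first integral I would bound, via Hölder on $\Omega$ with $Q\in L^s(\Omega)$ and $\{v_n\}$ bounded in $L^{2s'}(\Omega)$ (note $2s'\le4<6$), by a multiple of $|\phi^a_{v_n}-\phi^a_v|_{L^\infty(\Omega)}$, which tends to $0$ by a). For the second it is enough to know $v_n^2\to v^2$ in $L^{s'}(\Omega)$: this follows from the compact embedding $H^1(\Omega)\hookrightarrow L^{2s'}(\Omega)$ together with $|v_n^2-v^2|\le|v_n-v|(|v_n|+|v|)$, after which Hölder (with $Q\varphi\phi^a_v\in L^s(\Omega)$, since $\phi^a_v\in L^\infty$) concludes. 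This step is routine; what makes it work is that a) gives $L^\infty$-convergence of $\phi^a_{v_n}$, not merely $L^6$-convergence.

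To prove d), I would fix $w\in H^1(\mathbb R^3)$ and split analogously
\[
\int_{\mathbb R^3}Q\phi^a_{v_n}v_n w-\int_{\mathbb R^3}Q\phi^a_{v}v w=\int_{\mathbb R^3}Q(\phi^a_{v_n}-\phi^a_v)v_n w+\int_{\mathbb R^3}Q\phi^a_v(v_n-v)w.
\]
For the first term I would apply the generalized Hölder inequality with $Q\in L^s$, $\phi^a_{v_n}-\phi^a_v\in L^6$ (with norm tending to $0$), and $v_n,w$ placed in Lebesgue spaces of exponents in $[2,6]$ chosen so that all reciprocals sum to $1$ — a choice available for every admissible $s$ by \eqref{Q} — so that the term is dominated by $|\phi^a_{v_n}-\phi^a_v|_6$ times bounded quantities, hence vanishes. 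For the second term, since $v_n-v\rightharpoonup0$ in $L^q(\mathbb R^3)$ for every $q\in[2,6]$, it suffices to show that $Q\phi^a_v w\in L^{q'}(\mathbb R^3)$ for some $q'\in[6/5,2]$, which one obtains by balancing $Q\in L^s$, $\phi^a_v\in L^r$ for all $r\in(3,+\infty]$ (by \eqref{iii:lemmaphi}), and $w\in L^2(\mathbb R^3)\cap L^6(\mathbb R^3)$.

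The main obstacle is precisely this last step. Because the integral is genuinely global — the test function $w$ has no compact support — local compactness is of no help, and everything must be made to work by a careful matching of integrability exponents against the weak convergence of $v_n$. The borderline case is $s=+\infty$, where $\phi^a_v$ only just fails to belong to $L^3$; there one has to exploit that $w\in L^2(\mathbb R^3)$ and that $v_n-v\rightharpoonup0$ holds throughout the scale $L^q$, $q\in[2,6]$ (not merely in $L^6$), in order to place $Q\phi^a_v w$ in a dual space $L^{q'}$ with $q'\in[6/5,2)$. Once the exponent bookkeeping is organised along these lines — and it is, thanks to \eqref{Q} and \eqref{iii:lemmaphi} — both c) and d) follow.
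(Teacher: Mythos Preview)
Your argument for c) is correct and essentially the paper's: the same splitting into a $(\phi^a_{v_n}-\phi^a_v)$-piece and a $(v_n^2-v^2)$-piece, handled by H\"older on $\operatorname{supp}\varphi$. The only cosmetic difference is that you exploit $\mathcal D\hookrightarrow L^\infty$ to kill the first piece, whereas the paper uses the $L^6$-convergence of $\phi^a_{v_n}$ and places $v_n^2$ in $L^{6s/(5s-6)}$.

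For d) the two arguments diverge. The paper does \emph{not} split off a global $(v_n-v)$-term; instead it first establishes the convergence for test functions $\varphi\in C^\infty_c(\mathbb R^3)$ (exactly as in c), with $v_n^2$ replaced by $v_n$), then introduces the linear functionals $L_n w=\int Q\phi^a_{v_n}v_n w$ on $H^1(\mathbb R^3)$, checks via the H\"older chain $\tfrac{1}{s}+\tfrac{1}{6}+\tfrac{5s-6}{12s}+\tfrac{5s-6}{12s}=1$ that $\sup_n\|L_n\|<\infty$, and concludes by density. Your route is more direct: the second term $\int Q\phi^a_v(v_n-v)w$ vanishes because $Q\phi^a_v w$ lies in the dual of some $L^q$, $q\in[2,6]$, where $v_n\rightharpoonup v$; this exploits the full range of integrability of $\phi^a_v$ from item \eqref{iii:lemmaphi} and avoids the Banach--Steinhaus detour. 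The paper's approach, on the other hand, has the advantage that the intermediate convergence against $C^\infty_c$ test functions is reusable, and the exponent bookkeeping is done once, uniformly in $s$, via the single choice $\tfrac{12s}{5s-6}\in[2,6]$. Both are valid; yours is shorter, the paper's is more modular.
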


\begin{proof}
Let us prove the last two convergences of \eqref{vii:lemmaphi}.

%Observe that if $F$ is a compact set, then $\mathcal D(F)$ il  compactly embedded into $L^{2}(F)$
%since $\mathcal D(F) \hookrightarrow D^{1,2}(F)\hookrightarrow \hookrightarrow L^{2}(F) $.
Let $v_n \rightharpoonup v $ in $H^{1}(\mathbb{R}^3)$, so that 
$\phi_{v_{n}}^{a} \to \phi_{v}^{a}$. 
If $\varphi\in C^{\infty}_{c}(\mathbb R^{3}), F=\textrm{supp\,}\varphi$,
 then since 
 \begin{equation*}%\label{eq:Holder}
 %\frac{1}{s}+\frac16 + \frac{5s-6}{12s}+ \frac{5s-6}{12s} =1 \quad \text{ and }\quad 
 \frac{12s}{5s-6}\in[2,6],
 \end{equation*}
% $ Q\varphi, \phi_{v}^{a}\in L^{\infty}(\textrm{supp } \varphi)$\textcolor{red}{fare col 6!!!} 
(if $s=\infty$ we agree $\frac{12s}{5s-6}=\frac{12}{5}$ and $\frac{3s}{s-3} = 3$) we get, by the H\"older inequality,
 % by the H\"older inequality
\begin{eqnarray*}
\left| \int_{\mathbb R^{3}} Q(x)\phi_{v_{n}}^{a}v_{n}^{2}\varphi - Q(x)\phi_{v}^{a}v^{2}\varphi   \right| &\leq&
\int_{F } |Q(x)|  | \phi_{v_{n}}^{a}- \phi_{v}^{a}| v_{n}^{2} |\varphi| 
+ \int_{F} |Q(x) | |v_{n}^{2}- v^{2}| |\phi_{v}^{a}|   |\varphi|\\
&\leq& |Q|_{s}   | \phi_{v_{n}}^{a}- \phi_{v}^{a}|_{6}  |v_{n} |^{2}_{\frac{12s}{5s-6}}   |\varphi|_{\infty}
+ |Q|_{s} |v_{n}^{2} - v^{2}|_{L^{2}(F)} |\phi^{a}_{v}|_{6}   |\varphi|_{\frac{3s}{s-3}}\\
&\longrightarrow& 0.
\end{eqnarray*}
This proves  \emph{vii-c)}.

Let us address the convergence in \emph{vii-d)}.
First note that in a  is similar way as before we get
\begin{equation*}\label{eq:rossa}
\int_{\mathbb{R}^3}Q(x)\phi^{a}_{v_n} v_n \varphi  \longrightarrow \int_{\mathbb{R}^3}Q(x)\phi^{a}_{v} {v}\varphi , \ \ \forall \varphi \in C^{\infty}_{c}(\mathbb{R}^3).
\end{equation*}
In fact
\begin{eqnarray*}
 \int_{\mathbb R^{3}} |Q(x) |  |\phi_{v_{n}}^{a}v_{n}  - \phi_{v}^{a} v| |\varphi | &\leq&
\int_{F } |Q(x)|  | \phi_{v_{n}}^{a}- \phi_{v}^{a}| |v_{n}| |\varphi| 
+ \int_{F} |Q(x) | |v_{n}- v| |\phi_{v}^{a}|   |\varphi|\\
&\leq& |Q|_{s}   | \phi_{v_{n}}^{a}- \phi_{v}^{a}|_{6}  |v_{n} |_{2}   |\varphi|_{\frac{3s}{s-3}}
+ |Q|_{s} |v_{n} - v|_{L^{2}(F)} |\phi^{a}_{v}|_{6}   |\varphi|_{\frac{3s}{s-3}}\\
&\longrightarrow& 0
\end{eqnarray*}
This means that, defining the linear and continuous operators on $H^{1}(\mathbb R^{3})$
$$L_{n}w:=\int_{\mathbb R^{3}} Q(x)\phi^{a}_{v_{n}}v_{n}w ,\quad  Lw:=\int_{\mathbb R^{3}} Q(x)\phi^{a}_{v}vw,$$
we have the pointwise convergence $L_{n}\varphi \to L\varphi$ on a dense set.
However the sequence of operators is bounded; in fact
since 
$$
\frac{1}{s}+\frac16 + \frac{5s-6}{12s}+ \frac{5s-6}{12s} =1 \quad \text{ and }\quad \frac{12s}{5s-6}\in[2,6],
$$
by the H\"older inequality, the boundedness of $\{\phi^{a}_{v_{n}} \}_{n\in \mathbb N}$ in $L^{6}(\mathbb R^{3})$ 
and  of $\{ v_{n}\}_{n\in \mathbb N}$ in $H^{1}(\mathbb R^{3})$ we have
$$
\| L_{n}\| = \sup_{\| w\| =1} \left| \int_{\mathbb R^{3}} Q(x) \phi^{a}_{v_{n}} v_{n} w\right|  
\leq \sup_{\| w\| =1}  | Q|_{s} |\phi^{a}_{v_{n}} |_{6} | v_{n}|_{ \frac{12s}{5s-6}} | w|_{\frac{12s}{5s-6}}   
\leq c_{1}.
$$
Hence $L_{n}w\to Lw$ which is the desired convergence.
\end{proof}

Then usual arguments (see \cite{dAvGS}) show that the energy functional of a single variable $u\in H^{1}(\mathbb{R}^3)$,
 given by
\begin{eqnarray*}\label{eq:Phi}
   \mathcal{I}_{a,\lambda}(u)&:=&\Phi_{a,\lambda}(u,\phi^{a}_{u})\nonumber\\
   &=& \frac{1}{2}\Vert u\Vert^{2}+\frac{1}
{4} \int_{\mathbb{R}^3}Q(x)\phi^{a}_u u^2-\frac{1}{6}\int_{\mathbb{R}^3}\vert u \vert^{6} - \frac{\lambda}{p+1}\int_{\mathbb{R}^3}K(x)\vert u\vert^{p+1},
  \end{eqnarray*}
is   of class $C^1$ and  for all $u,v \in H^{1}(\mathbb{R}^3) $ it is
\begin{eqnarray*}
    \mathcal{I}'_{a,\lambda}(u)[v]&=& \int_{\mathbb{R}^3} (\nabla u \nabla v + u v) +\int_{\mathbb{R}^3}  Q(x)\phi_{u}^{a}u v -\int_{\mathbb{R}^3} |u|^{4}u v -\lambda \int_{\mathbb{R}^3} K(x)\vert u \vert^{p-1}uv. 
\end{eqnarray*}
So its critical points give the solutions we are looking for, and 
solving system $(\ref{eq2})$ is equivalent to solve the equation
\begin{equation}\label{eq:eq}
    -\Delta u +  u+  Q(x)\phi^{a}_u u=  \vert u\vert^4 u+ \lambda K(x)\vert u \vert^{p-1}u\ \ \text{ in }  \mathbb{R}^3
\end{equation}
which is the one we will consider from now on.
 In fact it happens that the following statements are equivalent:
 \begin{itemize}
      \item[(a)] the pair $(u_{a,\lambda},\phi_{a,\lambda})\in H^{1}(\mathbb{R}^3) \times \mathcal{D}$ is a critical point of $\Phi_{a,\lambda}$, \smallskip
     \item[(b)] $u_{a,\lambda}\in H^{1}(\mathbb{R}^3)$ is a  critical point of $\mathcal{I}_{a,\lambda}$ and $\phi_{a,\lambda}=\phi^{a}_{u_{a,\lambda}}$ given in \eqref{eq:phi}.
 \end{itemize}

Then speaking of solutions $u_{a,\lambda}$ of \eqref{eq:eq} has to be understood as speaking of solutions
$(u_{a,\lambda},\phi^{a}_{u_{a,\lambda}})$ of \eqref{eq2}, and viceversa.

\medskip

Since we aim to study also the limit behavior as $a\to 0$ of the solutions
of \eqref{eq2}, it will be useful to consider the ``limit problem''
obtained formally by setting $a=0$.
It is exactly  the well-known Schr\"odinger-Poisson system
 \begin{equation*}\label{eq:SP}
\displaystyle \left\{
   \begin{array}{lll}
   - \Delta u+  u+Q(x)\phi u= |u|^{4}u+\lambda K(x) |u|^{p-1}u&\quad  \text{ in } \mathbb R^{3}, \medskip \\
    - \Delta \phi = 4\pi Q(x)u^{2}&\quad  \text{ in } \mathbb R^{3},
   \end{array}
 \right. 
\end{equation*}
and setting
\begin{equation*}\label{eq:phi0}
\phi_{u}^{0}(x):=(\frac{1}{|\cdot |}*Qu^2)(x)=\int_{\mathbb{R}^3} \frac{1}{ \vert x-y \vert}Q(y) u^2(y)dy
 \in D^{1,2}(\mathbb R^{3})
\end{equation*}
the system is reduced to the equation
\begin{equation}\label{eq:eqSP}
    -\Delta u + u+  Q(x)\phi^{0}_u u= |u|^{4}u+\lambda K(x)| u|^{p-1}u \ \ \text{ in }  \mathbb{R}^3.
\end{equation}
%We see then that the Schr\"odinger-Poisson system can be seen  as a limit problem of
%the Schr\"odinger-Bopp-Podolsky system, since it is obtained formally by setting $a=0 $.
Its solutions are characterised as critical points of the energy functional on $H^{1}(\mathbb R^{3})$
\begin{equation*}\label{eq:I0}
\mathcal I_{0,\lambda} (u) = 
 \frac{1}{2}\Vert u\Vert^{2}+\frac{1}
{4} \int_{\mathbb{R}^3}Q(x)\phi^{0}_u u^2- \frac{1}{6} \int_{\mathbb{R}^3} |u|^{6}
-\frac{\lambda}{p+1}\int_{\mathbb R^{3}} K(x)|u|^{p+1}.
\end{equation*}
We just recall that also in this case there is some $C>0$ such that  $ \|\phi_{u}^{0}\|_{D^{1,2}} \leq C\|u\|^{2}$
and $u_{n}\rightharpoonup u$ in $H^{1}(\mathbb R^{3})$ implies $\phi_{u_{n}}^{0} \to \phi^{0}_{u}$ in $D^{1,2}(\mathbb R^{3})$,
see e.g. \cite{CM} where this converge is proved for $Q\in L^{2}(\mathbb R^{3})$ but actually holds under assumption \eqref{Q}.

Finally, observe two  useful facts true for $a\geq0$
hence for the Schr\"odinger-Poisson and Schr\"odinger-Bopp-Podolsky systems:
(i) it is
$$\int_{\mathbb R^{3}} Q(x)\phi^{a}_{u} u^{2} \geq0$$ 
and (ii) the functionals  $\mathcal I_{a,\lambda}$  are unbounded from below,
not coercive and even. In particular their critical points will appear in pairs.

Between the solutions we will find, there are the ones of minimal energy. These solutions are named
{\sl ground states}. 

\section{Main results}
We are able now to state our theorems which are of two types.
The first type concerns the existence of
 solutions that we find with the variational method,
by using the genus of Krasnoselsky. 
%and whose existence is stated in Theorem \ref{vegeta} and  Theorem \ref{th:a=0}.
The second type are indeed results  true for any possible variational solution.

We remark that all our results are new even for the Schr\"odinger-Poisson system.

\medskip

\subsubsection{Existence of solutions} \ \

\medskip

About the existence of infinitely many solutions we have  the following results.
%In all the paper we denote with  $C_{2}$  the embedding constant of  $H^{1}(\mathbb R^{3})$ into $L^{2}(\mathbb R^{3})$.
\begin{theorem} \label{vegeta}%(Existence result for SBP system)
Assume \eqref{Q} and \eqref{W}.
There exists a positive constant $$ \Lambda=\Lambda(p,|K|_{\frac{2}{1-p}})>0$$
  such that,
for any $\lambda \in (0,\Lambda)$ and for any $a>0$,
equation \eqref{eq:eq} possesses infinitely many solutions $\{u_{a,\lambda,n}\}_{n\in \mathbb N}\subset H^{1}(\mathbb R^{3})$,
which satisfy:
\begin{enumerate}
\item[1.]\emph{ground state:}  $u_{a,\lambda, 1}\geq0$ is the ground state;
\smallskip
\item[2.]\emph{increasing negative levels:} for any $n\in \mathbb N$ it is $ \mathcal I_{a,\lambda}(u_{a,\lambda,n})  \leq \mathcal I_{a,\lambda}(u_{a,\lambda,n+1}) <0 $ 
and 
$$\lim_{n\to+\infty} \mathcal I_{a,\lambda}(u_{a,\lambda,n}) = 0, \  \ \lim_{n\to +\infty} u_{a,\lambda,n} = 0 \ \text{ in }H^{1}(\mathbb R^{3});$$

\item[3.] \emph{uniform estimates:} the following estimate holds
%$$\|u_{a,\lambda, n} \|^{1-p}  \leq \lambda \frac{5-p}{2(p+1)} |K|_{\frac{2}{1-p}}C_{2}^{p+1};$$
$$\|u_{a,\lambda, n} \|^{1-p}  \leq \lambda \frac{5-p}{2(p+1)} |K|_{\frac{2}{1-p}};$$
\item[4.] \emph{squeezing in $\lambda$:} we have
$$\lim_{\lambda\to 0} \sup_{a>0, n\in \mathbb N}\|u_{a, \lambda, n}\| = 0;
$$
\item[5.] \emph{compactness:} the sequence $\{u_{a,\lambda,n}\}_{n\in \mathbb N}$  is compact,  in the sense that the limit in $H^{1}(\mathbb R^{3})$ of any sequence of its elements  exists up to subsequences  and is a solution too
(with the same values of $a$ and $\lambda$).
\end{enumerate}
\end{theorem}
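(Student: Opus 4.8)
The plan is to reduce $\mathcal I_{a,\lambda}$ to an even, bounded-below functional whose negative critical points are precisely the negative-energy solutions of \eqref{eq:eq}, to verify a Palais--Smale condition at every negative level, and then --- since the \emph{sublinear} term $\lambda K(x)|u|^{p-1}u$ makes the functional negative on arbitrarily high-dimensional small spheres --- to apply the symmetric mountain pass theorem of Clark in the refined form of Kajikiya. Concretely, put $M:=\big(\lambda\frac{5-p}{2(p+1)}|K|_{\frac{2}{1-p}}\big)^{1/(1-p)}$ and let $S$ be the (universal) best constant of $D^{1,2}(\mathbb R^3)\hookrightarrow L^6(\mathbb R^3)$. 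From $\int_{\mathbb R^3}Q\phi^a_u u^2\ge0$ and $|u|_6^6\le S^{-3}\|u\|^6$ one has $\mathcal I_{a,\lambda}(u)\ge h(\|u\|)$ with $h(t)=\frac12 t^2-\frac16 S^{-3}t^6-\frac{\lambda}{p+1}|K|_{\frac{2}{1-p}}t^{p+1}$; for $\lambda$ small $h$ is negative on an interval $(0,t_0)$ and positive on $(t_0,t_2)$, with $t_2<S^{3/4}$ and $M<t_2$. Fix $t_*\in(M,t_2)$, then $t_{**}\in(t_*,t_2)$, and a smooth nonincreasing $\tau$ with $\tau\equiv1$ on $[0,t_*]$ and $\tau\equiv0$ on $[t_{**},+\infty)$, and set
\[
\widetilde{\mathcal I}_{a,\lambda}(u):=\tfrac12\|u\|^2+\tfrac14\int_{\mathbb R^3}Q\phi^a_u u^2-\tfrac16\,\tau(\|u\|)\,|u|_6^6-\tfrac{\lambda}{p+1}\int_{\mathbb R^3}K|u|^{p+1}.
\]
Then $\widetilde{\mathcal I}_{a,\lambda}$ is even, of class $C^1$ (near the origin it coincides with $\mathcal I_{a,\lambda}$, so the non-smoothness of $\|\cdot\|$ is irrelevant), vanishes at $0$, equals $\mathcal I_{a,\lambda}$ on $B_{t_*}:=\{\|u\|<t_*\}$, and --- using $h\ge0$ on $[t_*,t_{**}]$ together with $M^{1-p}>\frac{2}{p+1}\lambda|K|_{\frac{2}{1-p}}$ on $[t_{**},+\infty)$ --- is $\ge0$ outside $B_{t_*}$; in particular it is bounded below with bounded sublevels. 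All thresholds depend only on $p$, $|K|_{\frac{2}{1-p}}$ and $S$, so the admissible range of $\lambda$ defines $\Lambda=\Lambda(p,|K|_{\frac{2}{1-p}})$.

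For any critical point $u$ of $\mathcal I_{a,\lambda}$ with $\mathcal I_{a,\lambda}(u)<0$, eliminating $\int|u|^6$ between $\mathcal I_{a,\lambda}(u)<0$ and $\mathcal I_{a,\lambda}'(u)[u]=0$ gives
\[
\tfrac13\|u\|^2\le\tfrac13\|u\|^2+\tfrac1{12}\int_{\mathbb R^3}Q\phi^a_u u^2=\mathcal I_{a,\lambda}(u)+\lambda\Big(\tfrac1{p+1}-\tfrac16\Big)\int_{\mathbb R^3}K|u|^{p+1}\le\lambda\Big(\tfrac1{p+1}-\tfrac16\Big)|K|_{\frac{2}{1-p}}\|u\|^{p+1},
\]
whence $\|u\|^{1-p}\le M^{1-p}$ (item 3) and $M\to0$ as $\lambda\to0$, uniformly in $a$ and in the index (item 4). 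Since $M<t_*$, such a $u$ lies in $B_{t_*}$, so the negative critical points of $\widetilde{\mathcal I}_{a,\lambda}$ coincide with those of $\mathcal I_{a,\lambda}$, i.e.\ with the genuine solutions of \eqref{eq:eq}. Moreover $\widetilde{\mathcal I}_{a,\lambda}$ satisfies $(PS)_c$ for all $c<0$: such a sequence $(u_n)$ is bounded, and along a subsequence $\|u_n\|\to\ell$; since $\widetilde{\mathcal I}_{a,\lambda}\ge0$ off $B_{t_*}$ one has $\ell<t_*$, hence eventually $\tau(\|u_n\|)\equiv1$ and $(u_n)$ is a $(PS)_c$ sequence for $\mathcal I_{a,\lambda}$. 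With $u_n\rightharpoonup u$, Lemma~\ref{lemmaphi} lets the nonlocal term and its derivative pass to the limit, and splitting $\mathbb R^3$ into a large ball and its complement, plus $K\in L^{2/(1-p)}$, does the same for the sublinear term, so $\mathcal I_{a,\lambda}'(u)=0$; then Brezis--Lieb applied to $|u_n|_6^6$, together with $\mathcal I_{a,\lambda}'(u_n)[u_n]\to0=\mathcal I_{a,\lambda}'(u)[u]$, gives $\|u_n-u\|^2-|u_n-u|_6^6\to0$, and $\|u_n-u\|\to b>0$ would force $b^2\ge S^{3/2}$, hence $\ell^2\ge\|u\|^2+b^2\ge S^{3/2}>t_*^2$ --- a contradiction; so $u_n\to u$ in $H^1(\mathbb R^3)$. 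The same argument, with $\mathcal I_{a,\lambda}'(u_n)=0$ replacing $(PS)$, applied to any sequence drawn from $\{u_{a,\lambda,n}\}_n$ (bounded by $M$, with energies in $[\inf\widetilde{\mathcal I}_{a,\lambda},0)\subset(-\infty,\tfrac13 S^{3/2})$) yields item 5.

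By \eqref{W} there is $\varepsilon>0$ with $\mathrm{meas}\{K>\varepsilon\}>0$; for each $k$, picking $k$ Lebesgue density points of $\{K>\varepsilon\}$ and bump functions $\psi_1,\dots,\psi_k\in C_c^\infty(\mathbb R^3)$ with pairwise disjoint supports, each positive on its (small) support ball, gives $\int_{\mathbb R^3}K|u|^{p+1}>0$ for every $u\ne0$ in the $k$-dimensional space $E_k:=\mathrm{span}\{\psi_1,\dots,\psi_k\}$, so $\beta_k:=\inf_{u\in E_k,\,\|u\|=1}\int_{\mathbb R^3}K|u|^{p+1}>0$; then, for $u\in E_k$ with $\|u\|=\rho$ small, $\widetilde{\mathcal I}_{a,\lambda}(u)=\mathcal I_{a,\lambda}(u)\le\frac12\rho^2+C\rho^4-\frac{\lambda}{p+1}\beta_k\rho^{p+1}<0$, and $A_k:=E_k\cap\{\|u\|=\rho_k\}$ (with $\rho_k\downarrow0$ small) is a compact symmetric set of genus $k$, contained in $B_{t_*}$, with $\sup_{A_k}\widetilde{\mathcal I}_{a,\lambda}<0$. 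The hypotheses of the Clark--Kajikiya theorem being verified, $\widetilde{\mathcal I}_{a,\lambda}$ possesses infinitely many critical points $\{u_{a,\lambda,n}\}_n$ with $\widetilde{\mathcal I}_{a,\lambda}(u_{a,\lambda,n})<0$ non-decreasing, $\widetilde{\mathcal I}_{a,\lambda}(u_{a,\lambda,n})\to0^-$ and --- the $A_k$ being contained in arbitrarily small balls --- $\|u_{a,\lambda,n}\|\to0$; by the above these are solutions of \eqref{eq:eq}, which gives item 2. Finally, $u_{a,\lambda,1}$ can be taken to be a global minimiser of $\widetilde{\mathcal I}_{a,\lambda}$ (it exists by $(PS)$ and has negative energy because $\inf_{A_1}\widetilde{\mathcal I}_{a,\lambda}<0$); since every term of $\widetilde{\mathcal I}_{a,\lambda}$ depends only on $|u|$, it may be chosen $\ge0$, and since every solution of \eqref{eq:eq} with negative energy lies in $B_{t_*}$ --- hence has energy $\ge\inf\widetilde{\mathcal I}_{a,\lambda}$ --- while those of nonnegative energy are irrelevant, $u_{a,\lambda,1}$ is a ground state, i.e.\ item 1.

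The main difficulty is the critical term $|u|^4u$: one must design the truncation so that \emph{every} negative critical point stays in $B_{t_*}$ with $t_*<S^{3/4}$ (equivalently $t_*^2<S^{3/2}$, the threshold below which no Sobolev bubble splits off in the Brezis--Lieb step), while at the same time respecting the a priori bound $\|u\|\le M\to0$ and the requirement $A_k\subset B_{t_*}$; and one wants $\Lambda$ to depend only on $p$ and $|K|_{\frac{2}{1-p}}$, which is why $Q$ is used only through $\int_{\mathbb R^3}Q\phi^a_u u^2\ge0$ and the $a$-uniform estimates of Lemma~\ref{lemmaphi}.
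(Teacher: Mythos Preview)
Your overall architecture is the same as the paper's: truncate $\mathcal I_{a,\lambda}$ so that it becomes even, bounded below, and coincides with $\mathcal I_{a,\lambda}$ on a small ball; show $(PS)_c$ for every $c<0$; build symmetric sets $A_k$ of arbitrary genus on which the functional is negative; and then invoke Kajikiya's refinement of Clark's theorem to get the infinitely many critical points with $b_{a,\lambda,n}\uparrow 0$ and $u_{a,\lambda,n}\to 0$. The a~priori bound (item~3), the squeezing (item~4), the compactness (item~5) and the ground-state identification (item~1) are obtained by the same combination $\mathcal I_{a,\lambda}(u)-\tfrac16\mathcal I'_{a,\lambda}(u)[u]$ as in the paper.

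The one genuine methodological difference is in the verification of $(PS)_c$. The paper goes through Lions' concentration--compactness principle (including the analysis at infinity) to prove $u_n\to u$ in $L^6(\mathbb R^3)$ under the threshold $c<\tfrac13 S^{3/2}-\lambda^{2/(1-p)}|K|_{\frac{2}{1-p}}^{2/(1-p)}\mathfrak a_p$. You bypass this entirely: after reducing to a $(PS)_c$ sequence for $\mathcal I_{a,\lambda}$ contained in $B_{t_*}$, you use Brezis--Lieb and the convergences of Lemma~\ref{lemmaphi} to obtain $\|u_n-u\|^2-|u_n-u|_6^6\to 0$, and then the Sobolev inequality forces $b^2\ge S^{3/2}$ if the remainder does not vanish, contradicting $\ell\le t_*<S^{3/4}$. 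This is shorter and avoids the measure-theoretic machinery; the price is that your $(PS)$ statement is tied to the truncation radius $t_*$, whereas the paper's version (Proposition~\ref{cor:PS}) gives Palais--Smale for the untruncated functional at levels up to $\tfrac13 S^{3/2}$, which is what is later needed for Theorems~\ref{th:a>0} and~\ref{th:convina}.

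One slip to fix: the claim ``$t_2<S^{3/4}$'' is false. For $\lambda\to 0$ the second zero of $h$ tends to $(3S^3)^{1/4}=3^{1/4}S^{3/4}>S^{3/4}$. What you actually need is $t_*<S^{3/4}$, and this is available because the \emph{first} zero $t_0$ and the a~priori bound $M$ both tend to $0$ with $\lambda$; so choose $t_*\in(\max(M,t_0),S^{3/4})$ and $t_{**}\in(t_*,t_2)$. With this correction your admissible range of $\lambda$ still depends only on $p$ and $|K|_{\frac{2}{1-p}}$, and the rest of the argument goes through unchanged.
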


Of course, if  $\phi_{a, \lambda, n} := \phi^{a}_{u_{a,\lambda, n}}$ are given as in \eqref{eq:phi},
in virtue of \eqref{v:lemmaphi} of Lemma \ref{lemmaphi}, it follows also
$$\lim_{n\to +\infty} \phi_{a,\lambda,n} = 0 \  \text{ in } \mathcal D
\ \  \text{ and } \ \ \lim_{\lambda\to 0} \sup_{a>0, n\in \mathbb N}\|\phi _{a, \lambda, n}\| = 0.
$$
Note also that  the squeezing in $\lambda$ is in accordance with the fact that the equation has
just the trivial solution when $\lambda=0$, %(in fact $\lambda\leq0$),
 as one can see by means of a Pohozaev Identity.

\medskip

However by analyzing the proofs of the above theorem, we see that indeed the results hold for the
limit problem. Hence we have explicitly

\begin{theorem}\label{th:a=0}%(Existence result for SP system)
Assume \eqref{Q} and \eqref{W}.
A similar result to Theorem \ref{vegeta}, with the same value of $\Lambda$,
holds when $a=0$, namely for the Schr\"odinger-Poisson system.

In accordance, these particular solutions are denoted with $u_{0,\lambda,n}$.
\end{theorem}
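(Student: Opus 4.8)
The plan is to observe that the proof of Theorem~\ref{vegeta} is insensitive to the value of $a$: the parameter enters the argument only through the coupling term $\int_{\mathbb R^{3}}Q(x)\phi^{a}_{u}u^{2}$ and through the finitely many properties of the map $u\mapsto\phi^{a}_{u}$ that are actually invoked there, and each of these has an exact counterpart at $a=0$ already recorded in the Introduction. Concretely, the genus scheme producing the infinitely many negative critical levels of $\mathcal I_{a,\lambda}$ uses only: (i) the even symmetry of the functional together with $\phi^{a}_{|u|}=\phi^{a}_{u}$ (true because $\phi^{a}_{u}$ depends on $u$ through $u^{2}$ only); (ii) the sign condition $\int_{\mathbb R^{3}}Q(x)\phi^{a}_{u}u^{2}\geq0$, so that this term only raises the value of $\mathcal I_{a,\lambda}$ and may be dropped in every lower bound; (iii) the estimate $\|\phi^{a}_{u}\|^{2}\leq C\|u\|^{4}$ and the weak-to-strong continuity $v_{n}\rightharpoonup v\Rightarrow\phi^{a}_{v_{n}}\to\phi^{a}_{v}$, with the convergences of Lemma~\ref{lemmaphi}\,\eqref{vii:lemmaphi} that it entails; and (iv) the compactness of the sublinear functional $u\mapsto\int_{\mathbb R^{3}}K(x)|u|^{p+1}$, which does not involve $a$ at all. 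For $a=0$ all of (i)--(iii) hold verbatim with $\phi^{0}_{u}$ in place of $\phi^{a}_{u}$: the identity $\phi^{0}_{|u|}=\phi^{0}_{u}$ and the sign condition are immediate from the definition of $\phi^{0}_{u}$; the bound $\|\phi^{0}_{u}\|_{D^{1,2}}\leq C\|u\|^{2}$ and the convergence $u_{n}\rightharpoonup u\Rightarrow\phi^{0}_{u_{n}}\to\phi^{0}_{u}$ in $D^{1,2}(\mathbb R^{3})$ are recalled in the Introduction; and the analogues of parts b), c), d) of Lemma~\ref{lemmaphi}\,\eqref{vii:lemmaphi} follow by repeating its proof word for word, with the continuous embedding $\mathcal D\hookrightarrow L^{6}(\mathbb R^{3})$ replaced by $D^{1,2}(\mathbb R^{3})\hookrightarrow L^{6}(\mathbb R^{3})$.

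Accordingly I would carry out the following steps. First, set up $\mathcal I_{0,\lambda}$ and equation~\eqref{eq:eqSP}, and check, exactly as in the case $a>0$, that $\mathcal I_{0,\lambda}$ is even, of class $C^{1}$, unbounded from below, and bounded from below with negative infimum on a small ball around the origin for every $\lambda\in(0,\Lambda)$; here one notes that the threshold is produced by the competition between $\frac12\|u\|^{2}$, the critical term (controlled via the Sobolev constant) and the sublinear term (controlled via $\lambda|K|_{\frac{2}{1-p}}$), all of which are $a$-independent, so that the very same $\Lambda=\Lambda(p,|K|_{\frac{2}{1-p}})$ works. Next, verify the compactness needed at negative levels: any relevant sequence is bounded (indeed small: the a priori bound of item~3 follows from $\mathcal I_{0,\lambda}(u)-\frac16\mathcal I'_{0,\lambda}(u)[u]<0$ together with $\int_{\mathbb R^{3}}Q(x)\phi^{0}_{u}u^{2}\geq0$), hence converges weakly; along such a sequence $\phi^{0}_{u_{n}}\to\phi^{0}_{u}$ strongly in $D^{1,2}(\mathbb R^{3})$ and $\int_{\mathbb R^{3}}K(x)|u_{n}|^{p+1}\to\int_{\mathbb R^{3}}K(x)|u|^{p+1}$ by compactness, while the critical term is controlled because the a priori bound keeps the norms below the relevant Sobolev threshold for $\lambda$ small. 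Then run the Krasnoselskii genus construction to obtain the critical values $c_{n}\nearrow0$ and the corresponding solutions $u_{0,\lambda,n}$, deduce $u_{0,\lambda,1}\geq0$ by replacing it with $|u_{0,\lambda,1}|$ (using $\phi^{0}_{|u|}=\phi^{0}_{u}$), and finally transcribe, mutatis mutandis, the proofs of items 2--5 of Theorem~\ref{vegeta}.

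Since no step introduces a genuinely new difficulty, the only point worth singling out is a bookkeeping one: one must check that none of the estimates imported from the case $a>0$ secretly used a property special to $\mathcal D$ that fails for $D^{1,2}(\mathbb R^{3})$, namely the continuous embedding $\mathcal D\hookrightarrow L^{\infty}(\mathbb R^{3})$ and the $C_{0}$-regularity of $\phi^{a}_{u}$ in Lemma~\ref{lemmaphi}\,\eqref{iii:lemmaphi}--\eqref{iv:lemmaphi}. Inspecting the proof of Theorem~\ref{vegeta}, one sees that only the $L^{6}$/$D^{1,2}$ bound on $\phi^{a}_{u}$ and its weak-to-strong convergence are ever used, and both persist at $a=0$; hence Theorem~\ref{th:a=0} is obtained by repeating the proof of Theorem~\ref{vegeta} with $\phi^{a}_{u}$ systematically replaced by $\phi^{0}_{u}$.
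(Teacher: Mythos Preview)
Your proposal is correct and follows essentially the same approach as the paper. The paper is even terser: at the start of Section~\ref{sec:PS} it declares once and for all that the results of Sections~\ref{sec:PS} and~\ref{sec:minmax} hold for every $a\geq0$, and then the proof of Theorems~\ref{vegeta} and~\ref{th:a=0} is given simultaneously, ending with the single sentence ``since everything is true also when $a=0$''; your careful audit of which properties of $\phi^{a}_{u}$ are actually used (only the $L^{6}$/$D^{1,2}$ bound and the weak-to-strong convergence, never the $L^{\infty}$ embedding special to $\mathcal D$) is exactly the verification implicit in that sentence.
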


To prove the above theorems we will use variational methods and apply results in  Critical Point Theory 
to the functional $\mathcal I_{a,\lambda}$ (or $\mathcal I_{0,\lambda}$).
In particular a suitable minimax scheme, joint with a compactness condition, will guarantee 
the existence of critical points.
For this reason the solutions obtained in Theorem \ref{vegeta} and Theorem \ref{th:a=0} 
will be called of {\sl minimax type}. 

\medskip

Now, the problems may have other solutions,
both  in case $\lambda\in(0,\Lambda)$ and for other values of the parameter $\lambda$.

%\begin{itemize}
%\item[1)] even for the same value of $\lambda$ there could be  solutions which are not of minimax type, \smallskip
%\item[2)] there could be solutions for grater values of $\lambda$, maybe found with other methods.
%\end{itemize}

In the following, for the {\sl generic} solution of \eqref{eq:eq} (resp. \eqref{eq:eqSP}) 
we will use also the notation  $u_{a,\lambda}$ (resp. $u_{0,\lambda}$), in case we need to specify the parameters. 
Hence  $u_{a,\lambda,n}$ and  $u_{0,\lambda,n}$
found in Theorem \ref{vegeta} and Theorem \ref{th:a=0}, are just particular cases of these more generic solutions,
and indeed the subscript $n$
will be deserved just for the infinitely many solutions found above.

But then the following questions are quite natural.
\begin{itemize}

\item[a)] What is the behavior of the solutions with respect to $\lambda$?

\item[b)] Fixed $\lambda$ and $n$,  under  which conditions the sequence
 $ \{u_{a,\lambda,n}\}_{a\in (0,1]} $, solutions  of \eqref{eq:eq}, converges when $a$ tends to $0$? If so,
is the limit a solutions of \eqref{eq:eqSP}? Is  it exactly $u_{0,\lambda,n}$ given in Theorem \ref{th:a=0}
(namely do the solutions ``correspond'' in some sense?), or the limit  is just a generic solution $u_{0,\lambda}$?
\end{itemize}

In other words, in the point b) we are interested if the solutions of the Schr\"odinger-Bopp-Podolsky system found as minimax
by means of the Krasnoselsky genus (solutions given in Theorem \ref{vegeta}), converge, as $a$ goes to $0$, to the solution of the Schr\"odinger-Poisson equation obtained again
by means of the Krasnoselsky genus theory, and not to other kind of solution.

\medskip

We give a partial answer to the above questions. To do that we study the set of all possible solutions.
% in a similar way, i.e. just by replacing  formally $a$ with $0$.

%%\begin{theorem}\label{th:GS}
%\textcolor{red}{si puoi dire che vale per soluzioni limitate?? No, bisogna avere energia meno di $1/3S^{3/2}$}
%For fixed $\lambda\in (0,\Lambda)$ the family  $\{u_{a,\lambda,1}\}_{a\in(0,1]}$  of ground states solutions found in Theorem \ref{vegeta} converges in $H^{1}(\mathbb R^{3})$ to a ground state solution of \eqref{eq:eqSP} with the same value of $\lambda$, namely
%$$\lim_{a\to 0} u_{a,\lambda,1} = u_{0,\lambda,1}.$$
%\end{theorem}
%
%In other words, the grounds states solutions for the problems do correspond.

\medskip

\subsubsection{Structure of the solution set} \ \

\medskip

A careful look at the proofs and estimates used to prove Theorem \ref{vegeta} and Theorem \ref{th:a=0} 
also permits to obtain results about all the possible solutions. 
Define the sets of all the solutions:
\begin{eqnarray*}
&&\mathcal S_{a,\lambda}:=\{\textrm{solutions in $H^{1}(\mathbb R^{3})$ of} \  \eqref{eq:eq}\}\quad a>0,\\
&&\mathcal S_{0,\lambda}:=\{\textrm{solutions in $H^{1}(\mathbb R^{3})$ of} \  \eqref{eq:eqSP}\}.
\end{eqnarray*}
%where  $a\geq0, \lambda>0$ are fixed such that  $\mathcal S_{a,\lambda}\neq\emptyset$,
%we have, for all $u_{a,\lambda}\in \mathcal S_{a,\lambda}$
%\begin{eqnarray}
%\mathcal I_{a,\lambda}(u_{a,\lambda})&=& \mathcal I_{a,\lambda}(u_{a,\lambda}) -\frac{1}{6}\mathcal I'_{a,\lambda} (u_{a,\lambda})[u_{a,\lambda}] 
%\nonumber\\
%   &=& \frac{1}{3}\| u_{a,\lambda}\|^{2}   +\frac{1}{12}\int_{\mathbb{R}^3} Q(x)\phi_{u_{a,\lambda}}^{a} u_{a,\lambda}^{2}
%    - \lambda \frac{5-p}{6(p+1)}  \int_{\mathbb{R}^3} K(x)\vert u_{a,\lambda}\vert^{p+1}\nonumber\\
%    &\geq& \frac{1}{3}\Vert u_{a,\lambda}\Vert^{2}  -\lambda \frac{5-p}{6(p+1)} \vert K\vert_{2/(1-p)}\vert u_{a,\lambda}\vert^{p+1}_{2}\nonumber \\
%&\geq& \frac{1}{3}\| u_{a,\lambda}\|^{2} - \lambda \frac{5-p}{6(p+1)} |K|_{2/(1-p)}C_{2}^{p+1}\|u_{a,\lambda}\|^{p+1}.
%\label{eq:Ibd}
%\end{eqnarray}
%Then by minimizing   the function $g(t) = c_{1}t^{2} - c_{2}t^{p+1}$ we see that
%$$\inf_{S_{a,\lambda}} \mathcal I_{a,\lambda} \geq   \frac13 \frac{p-1}{p+1}\left(\frac{4}{\lambda(5-p)|K|_{2/(1-p) }C_{2}^{p+1}}\right)^{2/(p-1)}>-\infty.$$
%So that, the functional is bounded below on the solutions (and coercive if $\mathcal S_{a,\lambda}$ is unbounded).
%%Then, in view of \eqref{eq:Ibd}, 
The next, is  a first general result on a  bound from below on the value of the energy on the solutions
(for both  system \eqref{eq:eq} and \eqref{eq:eqSP}).
\begin{theorem}\label{th:}%(Universal lower bound)
Assume \eqref{Q} and \eqref{W}. There is  a constant $\mathfrak c_{p} >0$
such that
$$\forall a\geq0,\lambda>0 \ \ \text{with}\ \ \mathcal S_{a,\lambda} \neq\emptyset : \ 
\inf_{\mathcal S_{a,\lambda}} \mathcal I_{a,\lambda}   \geq  - \lambda^{\frac{2}{1-p}}  |K|_{\frac{2}{1-p}}\mathfrak c_{p}.$$
Moreover $\mathcal I_{a,\lambda}$ is coercive on  $\mathcal S_{a,\lambda}$ if this set is unbounded
in $H^{1}(\mathbb R^{3})$.
\end{theorem}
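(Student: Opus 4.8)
The plan is to exploit the sublinearity $p\in(0,1)$, which makes the term $\lambda\int K(x)|u|^{p+1}$ grow slower than $\|u\|^2$, so that the quadratic part dominates and forces both a lower bound on the energy and coercivity on the solution set. The starting point is that any $u\in\mathcal S_{a,\lambda}$ satisfies $\mathcal I'_{a,\lambda}(u)[u]=0$, i.e.
\begin{equation*}
\|u\|^{2}+\int_{\mathbb R^{3}}Q(x)\phi^{a}_{u}u^{2}-\int_{\mathbb R^{3}}|u|^{6}-\lambda\int_{\mathbb R^{3}}K(x)|u|^{p+1}=0.
\end{equation*}
First I would use this identity to eliminate the sextic term from the energy: substituting $\int|u|^{6}=\|u\|^{2}+\int Q\phi^{a}_{u}u^{2}-\lambda\int K|u|^{p+1}$ into $\mathcal I_{a,\lambda}(u)$ gives
\begin{equation*}
\mathcal I_{a,\lambda}(u)=\frac{1}{3}\|u\|^{2}+\Big(\frac14-\frac16\Big)\int_{\mathbb R^{3}}Q(x)\phi^{a}_{u}u^{2}+\Big(\frac16-\frac{1}{p+1}\Big)\lambda\int_{\mathbb R^{3}}K(x)|u|^{p+1}.
\end{equation*}
Since $\int Q\phi^{a}_{u}u^{2}\ge0$ (the ``useful fact (i)'' recalled in the introduction, valid for all $a\ge0$, including the Poisson case with $\phi^{0}_{u}$), the middle term is nonnegative and can be dropped for a lower bound, leaving $\mathcal I_{a,\lambda}(u)\ge \frac13\|u\|^{2}-\big(\frac{1}{p+1}-\frac16\big)\lambda\int K|u|^{p+1}$.

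Next I would estimate $\int K|u|^{p+1}$ using H\"older with exponents $\frac{2}{1-p}$ and $\frac{2}{p+1}$ together with the Sobolev embedding $H^{1}(\mathbb R^{3})\hookrightarrow L^{2}(\mathbb R^{3})$:
\begin{equation*}
\int_{\mathbb R^{3}}K(x)|u|^{p+1}\le |K|_{\frac{2}{1-p}}\,|u|_{2}^{p+1}\le |K|_{\frac{2}{1-p}}\,\|u\|^{p+1}.
\end{equation*}
Thus $\mathcal I_{a,\lambda}(u)\ge \frac13\|u\|^{2}-c_{p}\lambda|K|_{\frac{2}{1-p}}\|u\|^{p+1}$ with $c_{p}=\frac{1}{p+1}-\frac16>0$. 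Now the function $t\mapsto \frac13 t^{2}-c_{p}\lambda|K|_{\frac{2}{1-p}}t^{p+1}$ on $t=\|u\|\ge0$ is bounded below (since $1<p+1<2$), and a one-variable minimization gives its minimum value, which is of the form $-\mathfrak c_{p}\,(\lambda|K|_{\frac{2}{1-p}})^{\frac{2}{1-p}}$ for an explicit constant $\mathfrak c_{p}>0$ depending only on $p$; this is precisely the claimed bound $\inf_{\mathcal S_{a,\lambda}}\mathcal I_{a,\lambda}\ge -\lambda^{\frac{2}{1-p}}|K|_{\frac{2}{1-p}}\mathfrak c_{p}$ (rescaling $|K|_{\frac{2}{1-p}}$ into $\mathfrak c_p$ if one prefers the exact form printed). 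Note the bound is uniform in $a\ge0$, which is what the statement asserts.

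For coercivity on $\mathcal S_{a,\lambda}$ when this set is unbounded, I would argue directly from $\mathcal I_{a,\lambda}(u)\ge \frac13\|u\|^{2}-c_{p}\lambda|K|_{\frac{2}{1-p}}\|u\|^{p+1}$: the right-hand side is a function of $\|u\|$ alone which tends to $+\infty$ as $\|u\|\to+\infty$, because $p+1<2$. Hence along any sequence $\{u_{j}\}\subset\mathcal S_{a,\lambda}$ with $\|u_{j}\|\to+\infty$ (which exists since $\mathcal S_{a,\lambda}$ is unbounded) we get $\mathcal I_{a,\lambda}(u_{j})\to+\infty$, and more generally $\mathcal I_{a,\lambda}(u)\to+\infty$ whenever $u\in\mathcal S_{a,\lambda}$ with $\|u\|\to+\infty$; this is exactly coercivity of $\mathcal I_{a,\lambda}$ restricted to $\mathcal S_{a,\lambda}$. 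I do not expect a serious obstacle here: the whole argument is essentially the Nehari-type substitution plus a scalar calculus estimate; the only point requiring a little care is keeping the constants uniform in $a$ (which follows because the only place $a$ enters is the nonnegative term $\int Q\phi^{a}_{u}u^{2}$, discarded in the lower bound) and handling the two cases $s<\infty$ and $s=\infty$ in the H\"older step for $K$, though for the $K$-term only the exponent $\frac{2}{1-p}$ on $K$ and $L^{2}$ on $u$ are used, so \eqref{W} alone suffices and $Q$ plays no role in this estimate.
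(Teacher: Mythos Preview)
Your proposal is correct and follows essentially the same approach as the paper: for a solution $u$ you compute $\mathcal I_{a,\lambda}(u)=\mathcal I_{a,\lambda}(u)-\tfrac16\mathcal I'_{a,\lambda}(u)[u]$ (your substitution is exactly this combination), drop the nonnegative term $\tfrac{1}{12}\int Q\phi^a_u u^2$, apply H\"older with exponents $\tfrac{2}{1-p},\tfrac{2}{p+1}$ to get $\mathcal I_{a,\lambda}(u)\ge \tfrac13\|u\|^2-\tfrac{5-p}{6(p+1)}\lambda|K|_{\frac{2}{1-p}}\|u\|^{p+1}$, and then minimize the scalar function to obtain the lower bound and coercivity. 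Your observation that the one-variable minimum naturally produces a factor $(\lambda|K|_{\frac{2}{1-p}})^{\frac{2}{1-p}}$ rather than $\lambda^{\frac{2}{1-p}}|K|_{\frac{2}{1-p}}$ is well spotted; the paper's displayed constant has the same feature, so this is a cosmetic discrepancy in the statement rather than in your argument.
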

Actually we know that for $\lambda\in(0,\Lambda)$ it is $0>\min_{\mathcal S_{a,\lambda}}\mathcal I_{a,\lambda}= \mathcal I_{a,\lambda} (u_{a,\lambda,1})$.
The next  result  states that the properties of the minimax solutions 
found in Theorem \ref{vegeta} and Theorem \ref{th:a=0} are indeed true for more general solutions.
\begin{theorem}\label{th:a>0}
Assume \eqref{Q} and \eqref{W}.
Let $a\geq0,\lambda>0$ be fixed such that $\mathcal S_{a,\lambda} \neq\emptyset$.
Define, for $c\geq\inf_{\mathcal S_{a,\lambda}}\mathcal I_{a,\lambda}$, %}-\mathfrak c|K|_{2/(1-p)} \lambda^{2/(1-p)}$, 
 the sublevel set of solutions
$$\mathcal S_{a,\lambda}^{c}:=\{u_{a,\lambda}\in \mathcal S_{a,\lambda} :  \ \mathcal I_{a,\lambda}(u_{a,\lambda})\leq c\}.$$

Then
\begin{itemize}
\item[a)] $\mathcal S_{a,\lambda}^{c}$ is bounded; more specifically
%\begin{eqnarray*}\label{eq:}
%&&\forall u_{a,\lambda} \in \mathcal S_{a,\lambda}^{c} : \ \ 
%\|u_{a,\lambda}\|^{2} \leq \lambda\frac{5-p}{2(p+1)} |K|_{\frac{2}{1-p}} C_{2}^{p+1} \|u_{a,\lambda} \|^{p+1} + 3c; 
%\end{eqnarray*}
\begin{eqnarray*}\label{eq:}
&&\forall u_{a,\lambda} \in \mathcal S_{a,\lambda}^{c} : \ \ 
\|u_{a,\lambda}\|^{2} \leq \lambda\frac{5-p}{2(p+1)} |K|_{\frac{2}{1-p}}  \|u_{a,\lambda} \|^{p+1} + 3c; 
\end{eqnarray*}
\item[b)] there is a  constant $\mathfrak a_{p}>0$ such that if $c<\frac13 S^{\frac32}- \lambda^{\frac{1}{2-p}}|K|_{\frac{2}{1-p}}^{\frac{2}{1-p}}
 \mathfrak a_{p}$  then $\mathcal S_{a,\lambda}^{c} $ is compact and, besides the estimate in $a$) we have also the following bound:
 $$\frac{1}{4}|\nabla u_{a,\lambda}|_{2}^{2} + \frac{1}{12}|u_{a,\lambda}|_{6}^{6} < \frac{1}{3}S^{3/2}.$$
\end{itemize}
\end{theorem}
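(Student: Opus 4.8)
The plan is to derive everything from the variational identities satisfied by a generic solution $u_{a,\lambda}\in\mathcal S_{a,\lambda}$, namely $\mathcal I_{a,\lambda}(u_{a,\lambda})\le c$ together with $\mathcal I'_{a,\lambda}(u_{a,\lambda})[u_{a,\lambda}]=0$. The latter reads
$$\|u_{a,\lambda}\|^2 + \int_{\mathbb R^3}Q(x)\phi^a_{u_{a,\lambda}}u_{a,\lambda}^2 - |u_{a,\lambda}|_6^6 - \lambda\int_{\mathbb R^3}K(x)|u_{a,\lambda}|^{p+1}=0.$$
For part a), I would compute the combination $\mathcal I_{a,\lambda}(u_{a,\lambda}) - \tfrac13\mathcal I'_{a,\lambda}(u_{a,\lambda})[u_{a,\lambda}]$, which eliminates the (sign-indefinite-looking but actually nonnegative) sextic term: using $\tfrac12-\tfrac13=\tfrac16$ on $\|u\|^2$, $\tfrac14-\tfrac13=-\tfrac1{12}$ on the $Q\phi u^2$ term, $-\tfrac16+\tfrac16=0$ on $|u|_6^6$, and $-\tfrac1{p+1}+\tfrac1{3}$ on the $K$-term, one gets
$$c\ \ge\ \mathcal I_{a,\lambda}(u_{a,\lambda})-\tfrac13\mathcal I'_{a,\lambda}(u_{a,\lambda})[u_{a,\lambda}] = \tfrac16\|u_{a,\lambda}\|^2 - \tfrac1{12}\!\int Q\phi^a_{u}u^2 - \lambda\tfrac{2-p}{3(p+1)}\!\int K|u|^{p+1}.$$
Since $\int Q\phi^a_{u}u^2\ge0$ by the fact recorded just before Section 2, dropping that term only helps; and bounding $\int K|u|^{p+1}\le |K|_{2/(1-p)}\,|u|_2^{p+1}\le |K|_{2/(1-p)}\|u\|^{p+1}$ by Hölder with exponents $2/(1-p)$ and $2/(1+p)$ and then Sobolev, one rearranges to
$$\tfrac16\|u_{a,\lambda}\|^2 \le \lambda\tfrac{2-p}{3(p+1)}|K|_{2/(1-p)}\|u_{a,\lambda}\|^{p+1} + c,$$
i.e. (multiplying by $6$, and noting $6\cdot\tfrac{2-p}{3(p+1)}=\tfrac{2(2-p)}{p+1}$; to match the stated constant $\tfrac{5-p}{2(p+1)}$ one may instead take the combination $\mathcal I - \tfrac12\mathcal I'[u]$ so that the $\|u\|^2$ terms cancel — wait, that kills the good term, so more carefully: with coefficient $\tfrac14$ on $\mathcal I'[u]$ the $Q\phi u^2$ term vanishes and $\|u\|^2$ gets coefficient $\tfrac14$, giving $\tfrac14\|u\|^2\le \lambda\tfrac{5-p}{4(p+1)}|K|_{2/(1-p)}\|u\|^{p+1}+c$, hence after multiplying by $4$ the stated inequality with the $3c$ replaced by $4c$; since $\mathcal I_{a,\lambda}(u)-\tfrac14\mathcal I'[u]=\tfrac14\|u\|^2+0-(\tfrac16-\tfrac1{12})|u|_6^6-\lambda(\tfrac1{p+1}-\tfrac14)\int K|u|^{p+1}$ and $\tfrac16-\tfrac1{12}=\tfrac1{12}>0$, that sextic term has the wrong sign to drop — so one keeps both and bounds $|u|_6^6\ge0$ to drop it, obtaining exactly $\tfrac14\|u\|^2\le \lambda\tfrac{5-p}{4(p+1)}|K|\,\|u\|^{p+1}+c$, i.e. $\|u\|^2\le\lambda\tfrac{5-p}{2\cdot 2(p+1)}\cdot 4\cdots$). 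The precise bookkeeping of the constant (getting $3c$ rather than $4c$) I would settle by taking the combination $\mathcal I_{a,\lambda}(u)-\theta\,\mathcal I'_{a,\lambda}(u)[u]$ with $\theta=\tfrac13$ and then also discarding the nonnegative $\int Q\phi u^2$ term along with a nonnegative multiple of $|u|_6^6$ left over — this is routine and I will not grind it here. The upshot of a) is that $\|u_{a,\lambda}\|$ is controlled by a sublinear-in-$\|u_{a,\lambda}\|$ term plus a constant, so since $p+1<2$ the set $\mathcal S_{a,\lambda}^c$ is bounded, uniformly in $a\ge0$.

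For part b), once $c<\tfrac13 S^{3/2}-\lambda^{1/(2-p)}|K|_{2/(1-p)}^{2/(1-p)}\mathfrak a_p$, I would use the energy identity in the form obtained by eliminating instead the $\|u\|^2$ term (the combination $\mathcal I_{a,\lambda}(u)-\tfrac12\mathcal I'_{a,\lambda}(u)[u]$): this gives
$$c\ \ge\ -\tfrac14\!\int Q\phi^a_u u^2 + \tfrac13|u|_6^6 - \lambda\big(\tfrac1{p+1}-\tfrac12\big)\!\int K|u|^{p+1} \ \ge\ \tfrac13|u|_6^6 - \lambda\tfrac{1-p}{2(p+1)}|K|_{2/(1-p)}\|u\|^{p+1},$$
again using $\int Q\phi u^2\ge0$. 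Hmm — this controls $|u|_6^6$ but not $|\nabla u|_2^2$ separately; to get the stated bound $\tfrac14|\nabla u|_2^2+\tfrac1{12}|u|_6^6<\tfrac13 S^{3/2}$ I would instead argue as in the standard Brézis–Nirenberg-type dichotomy: combining a) (which makes $\|u\|$, hence $\lambda\|u\|^{p+1}$, small when $\lambda$ is small, with quantitative control from part a)) one shows that the quantity $\tfrac14|\nabla u|_2^2+\tfrac1{12}|u|_6^6$ — which equals $\mathcal I_{a,\lambda}(u)-\tfrac13\mathcal I'_{a,\lambda}(u)[u]$ up to the nonnegative $Q\phi u^2$ and $K$-terms — cannot lie in the ``gap'' $[\tfrac13 S^{3/2},\ \cdot)$ forced by the Sobolev inequality $S|u|_6^2\le|\nabla u|_2^2$ applied together with the equation (this is where the critical threshold $\tfrac13 S^{3/2}$ enters, exactly as in the compactness analysis for the $|u|^4u$ nonlinearity). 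Concretely: from $\mathcal I'[u][u]=0$, $\|u\|^2+\int Q\phi u^2=|u|_6^6+\lambda\int K|u|^{p+1}$, so dropping $\int Q\phi u^2\ge0$ and using Sobolev, $S|u|_6^2\le|\nabla u|_2^2\le\|u\|^2\le |u|_6^6+\lambda|K|_{2/(1-p)}\|u\|^{p+1}$; if $|\nabla u|_2^2$ were $\ge$ some threshold one derives $|u|_6^6\ge S^{3/2}$ roughly, contradicting the energy upper bound $c$ once $\lambda$ is small, by the same algebra that produced $\mathfrak a_p$. The compactness of $\mathcal S_{a,\lambda}^c$ then follows from the boundedness in a) plus this strict sub-threshold estimate: a bounded sequence of solutions has a weakly convergent subsequence $u_n\rightharpoonup u$; by Lemma \ref{lemmaphi}\eqref{vii:lemmaphi} the nonlocal terms pass to the limit, $u$ solves the equation, and testing the difference and using that $\tfrac14|\nabla u_n|_2^2+\tfrac1{12}|u_n|_6^6$ stays strictly below $\tfrac13 S^{3/2}$ rules out loss of compactness of the critical term (no concentration/bubbling), so $u_n\to u$ strongly. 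The subcritical term $\lambda K|u_n|^{p-1}u_n$ converges by the compact embedding $H^1\hookrightarrow L^q_{loc}$ combined with the tail control coming from $K\in L^{2/(1-p)}$, exactly as in the proof of Lemma \ref{lemmaphi}.

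The main obstacle is part b): making precise the Brézis–Nirenberg-type argument that excludes concentration when the energy is below $\tfrac13 S^{3/2}$ minus the $\lambda$-correction, and in particular pinning down the constant $\mathfrak a_p$ so that the sublinear perturbation (whose contribution to the energy is of order $\lambda\cdot\lambda^{(p+1)/(1-p)\cdot\text{(something)}}$, i.e. $\lambda^{2/(1-p)}$ after optimizing via a)) can be absorbed below the Sobolev threshold. The boundedness in a) is essentially free from the Pohozaev-free Nehari-type manipulation above; the compactness requires, in addition, the standard splitting of the critical term (Brézis–Lieb) and ruling out the bubble, which is the only genuinely delicate point and is where the strict inequality $c<\tfrac13 S^{3/2}-\cdots$ is used. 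Everything about the nonlocal term $Q\phi^a_u u^2$ is handled by its nonnegativity (for the estimates) and by Lemma \ref{lemmaphi}\eqref{vii:lemmaphi} (for passing to the limit), uniformly in $a\ge0$, which is why the same proof covers the Schrödinger–Poisson case $a=0$.
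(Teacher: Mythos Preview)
Your overall strategy matches the paper's --- take linear combinations $\mathcal I_{a,\lambda}(u)-\theta\,\mathcal I'_{a,\lambda}(u)[u]$, use $\int Q\phi^a_u u^2\ge 0$, and H\"older on the $K$-term --- but the execution has two concrete problems.

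\textbf{Part a): wrong choice of $\theta$.} To kill the sextic term you need $\theta=\tfrac16$, not $\tfrac13$ (your identity ``$-\tfrac16+\tfrac16=0$'' only holds for $\theta=\tfrac16$; with $\theta=\tfrac13$ the sextic coefficient is $+\tfrac16$, and your coefficients for $\|u\|^2$ and $Q\phi u^2$ are computed for $\theta=\tfrac13$, so the displayed line is internally inconsistent). With $\theta=\tfrac16$ one gets $Q\phi u^2$-coefficient $\tfrac14-\tfrac16=+\tfrac1{12}$ (drop it), $\|u\|^2$-coefficient $\tfrac13$, and $K$-coefficient $-\lambda\tfrac{5-p}{6(p+1)}$; multiplying the resulting inequality by $3$ gives exactly the stated bound with $3c$. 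This is precisely the paper's computation \eqref{eq:Ibd}, redone for a genuine solution rather than a Palais--Smale sequence.

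\textbf{Part b): no need to redo concentration--compactness.} The paper's argument is much shorter than your sketch: any sequence in $\mathcal S_{a,\lambda}^{c}$ is trivially a Palais--Smale sequence (since $\mathcal I'_{a,\lambda}$ vanishes identically on it) at a level $\le c$, so one simply invokes Proposition \ref{cor:PS}; the entire Lions machinery of Section~\ref{sec:PS} applies verbatim and yields strong $H^1$-convergence. You do not have to re-run a Br\'ezis--Nirenberg dichotomy or Br\'ezis--Lieb splitting by hand.

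For the extra bound $\tfrac14|\nabla u|_2^2+\tfrac1{12}|u|_6^6<\tfrac13 S^{3/2}$, the correct combination is $\theta=\tfrac14$ (which eliminates $Q\phi u^2$, not $\|u\|^2$); the point you missed is then to split $\tfrac14\|u\|^2=\tfrac14|\nabla u|_2^2+\tfrac14|u|_2^2$ and absorb the $L^2$-part together with the sublinear term into the scalar function
\[
\theta_\lambda(t)=\tfrac14 t^2-\lambda\tfrac{3-p}{4(p+1)}|K|_{\frac{2}{1-p}}\,t^{p+1},
\]
whose minimum is exactly $-\lambda^{2/(1-p)}|K|_{\frac{2}{1-p}}^{2/(1-p)}\mathfrak a_p$ --- this is how $\mathfrak a_p$ is \emph{defined} in Section~\ref{sec:PS} (see \eqref{eq:t} and \eqref{eq:stimac}). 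Rearranging then gives the claimed strict inequality directly from $c<\tfrac13 S^{3/2}-\lambda^{2/(1-p)}|K|_{\frac{2}{1-p}}^{2/(1-p)}\mathfrak a_p$. Your attempt with $\theta=\tfrac12$ loses $|\nabla u|_2^2$ entirely and cannot produce the stated bound.
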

As we will see, the constants  $\mathfrak c_{p}$  and 
$\mathfrak a_{p}$  of Theorem \ref{th:} and Theorem \ref{th:a>0}
are explicit and just depend on $p$.
\medskip

\subsubsection{Behavior of the solutions as the parameters tend to $0$} \ \ 

\medskip

It follows by Theorem \ref{th:a>0} that, for $a$ fixed, the solutions are bounded as  $\lambda$ tends to zero.
However for solutions at nonpositive energy level,
as the one in Theorem \ref{vegeta} and Theorem \ref{th:a=0}, 
 more can be said.
\begin{theorem}\label{th:squeezing}%(Behavior as $\lambda$ tends to $0$)
Assume \eqref{Q} and \eqref{W}.
When $c\leq0$, for the solutions in $\mathcal S_{a,\lambda}^{c}$
the squeezing in $\lambda$ (uniformly in $a$) follows:
\begin{eqnarray*}\label{eq:squeeze}
&&\lim_{\lambda\to 0} \sup_{a>0} \|u_{a, \lambda}\|= 0\quad \text{and}\quad 
\lim_{\lambda\to 0} \|u_{0, \lambda}\| = 0.
\end{eqnarray*}
Then, in particular it follows that
\begin{eqnarray}
&&\lim_{\lambda\to 0}\mathcal I_{a,\lambda}(u_{a,\lambda}) = 
\lim_{\lambda\to 0}\mathcal I_{0,\lambda}(u_{0,\lambda}) =
\lim_{\lambda\to 0}\mathcal I_{a,\lambda}(u_{0,\lambda}) 
= \lim_{\lambda\to 0}\mathcal I_{0,\lambda}(u_{a,\lambda}) =0, \label{eq:new0}\\
&& \lim_{\lambda\to 0}\mathcal I'_{a,\lambda}(u_{0,\lambda}) =
\lim_{\lambda\to 0}\mathcal I'_{0,\lambda}(u_{a,\lambda})  =0\ \quad \text{ in } H^{-1}(\mathbb R^{3}). \label{eq:new}
\end{eqnarray}
\end{theorem}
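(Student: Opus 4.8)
The plan is to reduce everything to the single uniform bound provided by Theorem \ref{th:a>0} and then exploit the structure of the functionals. First I would observe that, since $c\le 0$, any solution $u_{a,\lambda}\in\mathcal S_{a,\lambda}^{c}$ satisfies $\mathcal I_{a,\lambda}(u_{a,\lambda})\le 0$, so part a) of Theorem \ref{th:a>0} gives (with $c$ replaced by $0$ in the estimate, which only helps)
\[
\|u_{a,\lambda}\|^{2}\le \lambda\frac{5-p}{2(p+1)}|K|_{\frac{2}{1-p}}\,\|u_{a,\lambda}\|^{p+1}.
\]
If $u_{a,\lambda}\neq 0$ we may divide by $\|u_{a,\lambda}\|^{p+1}$ (note $p+1<2$) to obtain $\|u_{a,\lambda}\|^{1-p}\le \lambda\frac{5-p}{2(p+1)}|K|_{\frac{2}{1-p}}$, exactly the uniform estimate of item 3 in Theorem \ref{vegeta}; and this trivially holds also when $u_{a,\lambda}=0$. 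Taking the supremum over $a>0$ and letting $\lambda\to 0$ gives the first squeezing assertion, and the case $a=0$ is identical using Theorem \ref{th:a=0} (or directly the $a=0$ version of Theorem \ref{th:a>0}). This is the easy part.

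Next, for \eqref{eq:new0} I would simply insert the uniform smallness of $\|u_{a,\lambda}\|$ (resp. $\|u_{0,\lambda}\|$) into the explicit formulas for the four functionals. For $\mathcal I_{a,\lambda}(u_{a,\lambda})$ one bounds each term: $\frac12\|u\|^{2}$ and the Sobolev-controlled terms $\frac14\int Q\phi_u^{a}u^{2}\le C\|u\|^{4}$ (by \eqref{v:lemmaphi}), $\frac16|u|_6^6\le C\|u\|^{6}$, and $\frac{\lambda}{p+1}\int K|u|^{p+1}\le \frac{\lambda}{p+1}|K|_{\frac{2}{1-p}}|u|_{6}^{p+1}\le C\lambda\|u\|^{p+1}$, all of which go to $0$. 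The key point here is that these bounds are \emph{uniform in} $a$, which is guaranteed precisely because the constant $C$ in \eqref{v:lemmaphi} and the Sobolev constant do not depend on $a$. The "crossed" quantities $\mathcal I_{a,\lambda}(u_{0,\lambda})$ and $\mathcal I_{0,\lambda}(u_{a,\lambda})$ are handled the same way, using for the first that $\int Q\phi_{u_{0,\lambda}}^{a}u_{0,\lambda}^{2}\le C\|u_{0,\lambda}\|^{4}$ with the same $a$-independent $C$, and for the second that $\int Q\phi_{u_{a,\lambda}}^{0}u_{a,\lambda}^{2}\le C\|u_{a,\lambda}\|^{4}$; in both cases only $\|u_{\cdot,\lambda}\|\to 0$ is used, so no "correspondence" between solutions is needed.

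Finally, for \eqref{eq:new} I would estimate the $H^{-1}$ norm directly from the derivative formula. Writing $\mathcal I'_{a,\lambda}(u_{0,\lambda})[v]=\langle u_{0,\lambda},v\rangle+\int Q\phi_{u_{0,\lambda}}^{a}u_{0,\lambda}v-\int|u_{0,\lambda}|^{4}u_{0,\lambda}v-\lambda\int K|u_{0,\lambda}|^{p-1}u_{0,\lambda}v$, each term is bounded by a constant times a power of $\|u_{0,\lambda}\|$ times $\|v\|$: the first gives $\|u_{0,\lambda}\|$, the second $C\|u_{0,\lambda}\|^{3}$ (Hölder with $Q\in L^{s}$, $\phi^{a}\in L^{6}$ with $|\phi^{a}_{u}|_6\le C\|u\|^{2}$, and $u,v\in L^{q}$ for suitable $q$), the third $C\|u_{0,\lambda}\|^{5}$ by Sobolev, and the fourth $C\lambda\|u_{0,\lambda}\|^{p}$ again by \eqref{W} and Sobolev. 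Dividing by $\|v\|$ and taking the sup gives $\|\mathcal I'_{a,\lambda}(u_{0,\lambda})\|_{H^{-1}}\to 0$ uniformly, and symmetrically for $\mathcal I'_{0,\lambda}(u_{a,\lambda})$. I expect the only genuinely delicate point to be making sure every constant appearing is independent of $a$ — this rests entirely on the $a$-uniform bounds already collected in Lemma \ref{lemmaphi}\eqref{v:lemmaphi} and the corresponding remark for $\phi^{0}_{u}$, so once those are invoked the argument is routine.
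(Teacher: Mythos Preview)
Your proposal is correct and follows the paper's approach exactly: obtain the uniform bound $\|u_{a,\lambda}\|^{1-p}\le \lambda\frac{5-p}{2(p+1)}|K|_{\frac{2}{1-p}}$ from Theorem \ref{th:a>0}\,a) (equivalently, the computation \eqref{eq:Ibd}) with $c\le 0$, from which the squeezing is immediate, and the paper then simply declares \eqref{eq:new0}--\eqref{eq:new} ``obvious'' where you have spelled out the term-by-term estimates using the $a$-uniform bounds from Lemma \ref{lemmaphi}\eqref{v:lemmaphi}. One minor slip: the H\"older pairing with $K\in L^{2/(1-p)}$ gives $|u|_2^{p+1}$, not $|u|_6^{p+1}$, but since $|u|_2\le\|u\|$ your conclusion $\le C\lambda\|u\|^{p+1}$ is unaffected.
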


\medskip

%We know that to $u_{a,\lambda,n}$ is associated the unique solution of the second equation in \eqref{eq2},
%$\phi_{a,\lambda,n}:=\phi^{a}_{u_{a,\lambda,n}}$.
%In virtue of \eqref{v:lemmaphi} of Lemma \ref{lemmaphi} all the solutions found squeeze uniformly to zero, in the sense that
%%\begin{corollary}\label{cor:vanishing}
%%The solutions found in Theorem \ref{vegeta} satisfy:
%%%for any fixed $a>0, n\in \mathbb N$:
%%$$\lim_{\lambda\to 0} \sup_{a>0, n\in \mathbb N}\|u_{a, \lambda, n}\| = 0 \ \ \text{ in } H^{1}(\mathbb R^{3})\quad
%%\text{ and } \quad \lim_{\lambda\to 0} \sup_{a>0, n\in \mathbb N}\|\phi _{a, \lambda, n}\| = 0 \ \  \text{ in } \mathcal D.$$
%%\end{corollary}
%%As it will be evident by the proofs,
%% the estimate in 3 of Theorem \ref{vegeta}, and then Corollary \ref{cor:vanishing},
%%  is indeed true for any solution at negative energy level.
%
%Moreover this result is indeed  in accordance with the fact that  for $\lambda=0$ (or more in general $\lambda\leq0$) 
%the system \eqref{eq2} admits just the trivial solution, as can be seen by means of a Pohozaev identity.

Finally we give also an asymptotic behavior of the solutions  of \eqref{eq:eq} whenever the parameter $\lambda$
is fixed and the parameter 
$a$ tends to zero, obtaining once again (as in previous papers \cite{dAvGS, FS,LS,SS}) that the Schr\"odinger-Poisson system 
can be seen in some sense as limit of the Schr\"odinger-Bopp-Podolsky system. In the next results
 $\lambda$ has to be considered fixed in such a way that both \eqref{eq:eq} and \eqref{eq:eqSP} admit solutions.
Again we state the result for general solutions, not only the particular  sequence $\{u_{a,\lambda,n}\}_{n\in \mathbb N}$ found in Theorem 
\ref{vegeta} and Theorem \ref{th:a=0}.
%Whenever we are referring to solutions which are not necessarily the sequence  found in the cited theorems
%we use the notation $u_{a,\lambda}$ or $u_{0,\lambda}$, namely without the subscript $n$.
\begin{theorem}\label{th:convina}%(Behavior as $a$ tends to $0$)
Assume \eqref{Q} and \eqref{W}.
Fixed $\lambda$, consider any family of solutions $\{u_{a,\lambda}\}_{a\in(0,1]}$ 
of \eqref{eq:eq} such that  
$$\mathcal I_{a,\lambda}(u_{a,\lambda}) < \frac13 S^{\frac32} - \lambda^{\frac{2}{1-p}} |K|_{\frac{2}{1-p}}^{\frac{2}{1-p}}\mathfrak a_{p}$$
%Then $\{u_{a,\lambda}\}_{a\in(0,1]}$ and $\{\mathcal I_{a,\lambda}(u_{a,\lambda})\}_{a\in(0,1]}$ are bounded.
where $\mathfrak a_{p}$ is the same constant as in Theorem \ref{th:a>0}.

Then as $a$ tends to $ 0$, the family $\{u_{a,\lambda}\}_{a\in(0,1]}$ is convergent,
up to subsequence, to a solution of  the Schr\"odinger-Poisson equation \eqref{eq:eqSP}
(with the same value of $\lambda$), denoted with $u_{0,\lambda}$. 

Moreover
\begin{eqnarray}
&& \lim_{a\to 0} \mathcal I_{0,\lambda}(u_{a,\lambda})=\lim_{a\to 0} \mathcal I_{a,\lambda}(u_{a,\lambda}) 
= \lim_{a\to 0} \mathcal I_{a,\lambda}(u_{0,\lambda})=
  \mathcal I_{0,\lambda}(u_{0,\lambda}), \label{eq:Ia0}\\
&&\lim_{a\to 0} \mathcal I'_{a,\lambda}(u_{0,\lambda})=
 \lim_{a\to 0} \mathcal I'_{0,\lambda}(u_{a,\lambda})= 0 \quad \text{ in } H^{-1}(\mathbb R^{3}).
 \label{eq:I'a0}
\end{eqnarray}
\end{theorem}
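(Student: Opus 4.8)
\textbf{Proof plan for Theorem \ref{th:convina}.}

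The plan is to exploit the uniform a-priori bounds coming from the assumed strict energy inequality, extract a weakly convergent subsequence, and then upgrade the weak convergence to strong convergence by a concentration-compactness/Brezis--Lieb type argument that rules out the loss of mass at the critical exponent thanks precisely to the condition $\mathcal I_{a,\lambda}(u_{a,\lambda}) < \frac13 S^{3/2} - \lambda^{\frac{2}{1-p}}|K|_{\frac{2}{1-p}}^{\frac{2}{1-p}}\mathfrak a_p$. First I would observe that, since $u_{a,\lambda}\in\mathcal S_{a,\lambda}^c$ with $c$ below the threshold, Theorem \ref{th:a>0}(b) gives a bound on $\|u_{a,\lambda}\|$ and on $|\nabla u_{a,\lambda}|_2^2 + |u_{a,\lambda}|_6^6$ that is uniform in $a\in(0,1]$ (one checks the constants in Theorem \ref{th:a>0} do not depend on $a$). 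Hence along a sequence $a_j\to 0$ we may assume $u_{a_j,\lambda}\rightharpoonup u_0$ weakly in $H^1(\mathbb R^3)$, strongly in $L^q_{loc}$ for $q<6$, and a.e. The next point is to pass to the limit in the weak formulation \eqref{eq:eq}: the linear term and the sublinear term $\lambda K(x)|u|^{p-1}u$ converge by \eqref{W} and the compact local embedding (the tail being controlled by $K\in L^{2/(1-p)}$), while the nonlocal term $\int Q(x)\phi^{a_j}_{u_{a_j,\lambda}} u_{a_j,\lambda} w$ requires the convergence $\phi^{a_j}_{v_j}\to\phi^0_v$ as $a\to 0$; this last fact is the $a\to 0$ analogue of Lemma \ref{lemmaphi}\eqref{vii:lemmaphi} and is exactly the type of convergence already established in the earlier papers \cite{dAvGS,SS} cited for this purpose, so I would invoke it (or reprove it by the same Hölder estimates as in the proof of Lemma \ref{lemmaphi}, using $0\le\mathcal K(x)\le 1/|x|$ and dominated convergence). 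This identifies $u_0$ as a weak solution of \eqref{eq:eqSP}, i.e. $u_0\in\mathcal S_{0,\lambda}$.

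Then comes the core step: showing $u_{a_j,\lambda}\to u_0$ strongly in $H^1$. I would set $w_j := u_{a_j,\lambda} - u_0$, so $w_j\rightharpoonup 0$, and use the Brezis--Lieb lemma to split $|u_{a_j,\lambda}|_6^6 = |u_0|_6^6 + |w_j|_6^6 + o(1)$ and similarly $\|u_{a_j,\lambda}\|^2 = \|u_0\|^2 + \|w_j\|^2 + o(1)$. Testing \eqref{eq:eq} with $u_{a_j,\lambda}$ and with $u_0$, subtracting, and using that the nonlocal and sublinear terms pass to the limit (the nonlocal term being in fact a compact perturbation: $\int Q\phi^{a_j}_{u_{a_j}}u_{a_j}^2\to\int Q\phi^0_{u_0}u_0^2$), one obtains $\|w_j\|^2 = |w_j|_6^6 + o(1)$. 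Denoting $\ell := \lim\|w_j\|^2 \ge \lim|\nabla w_j|_2^2$ and using the Sobolev inequality $S|w_j|_6^2\le|\nabla w_j|_2^2$, the standard dichotomy gives either $\ell = 0$ or $\ell\ge S^{3/2}$. To exclude the second alternative I would plug the splitting into the energy identity for solutions, $\mathcal I_{a_j,\lambda}(u_{a_j,\lambda}) = \mathcal I_{a_j,\lambda}(u_{a_j,\lambda}) - \frac12\mathcal I'_{a_j,\lambda}(u_{a_j,\lambda})[u_{a_j,\lambda}]$ rewritten so that the quadratic and nonlocal parts disappear, yielding roughly $\mathcal I_{a_j,\lambda}(u_{a_j,\lambda}) = \frac13 |u_{a_j,\lambda}|_6^6 + \tfrac{p-1}{2(p+1)}\lambda\int K|u_{a_j,\lambda}|^{p+1}$ plus lower-order terms; after the Brezis--Lieb split this separates into the contribution of $u_0$ (which is $\ge -\lambda^{2/(1-p)}|K|_{2/(1-p)}^{2/(1-p)}\mathfrak a_p$-ish, controllable as in Theorem \ref{th:}) plus $\tfrac13\ell$. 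If $\ell\ge S^{3/2}$ this forces $\mathcal I_{a_j,\lambda}(u_{a_j,\lambda}) \ge \tfrac13 S^{3/2} - \lambda^{2/(1-p)}|K|_{2/(1-p)}^{2/(1-p)}\mathfrak a_p$ for large $j$, contradicting the hypothesis. Hence $\ell = 0$, i.e. $w_j\to 0$ in $H^1$, which is the asserted strong convergence $u_{a_j,\lambda}\to u_0$ (and, by a subsequence-of-subsequence argument, the whole family as $a\to 0$ up to subsequence).

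Once strong convergence is in hand, the limits in \eqref{eq:Ia0} and \eqref{eq:I'a0} are essentially bookkeeping. For \eqref{eq:Ia0}: $\mathcal I_{a_j,\lambda}(u_{a_j,\lambda})\to\mathcal I_{0,\lambda}(u_0)$ because every term is continuous under strong $H^1$-convergence together with $\phi^{a_j}_{u_{a_j}}\to\phi^0_{u_0}$ in $D^{1,2}$; $\mathcal I_{0,\lambda}(u_{a_j,\lambda})\to\mathcal I_{0,\lambda}(u_0)$ is immediate; and $\mathcal I_{a_j,\lambda}(u_0)\to\mathcal I_{0,\lambda}(u_0)$ follows from $\phi^{a_j}_{u_0}\to\phi^0_{u_0}$ and $\tfrac{a_j^2}{16\pi}|\Delta\phi^{a_j}_{u_0}|_2^2\to 0$ (which one gets from $\|\phi^{a_j}_{u_0}\|_{\mathcal D}^2 = 4\pi\int Q\phi^{a_j}_{u_0}u_0^2 \to 4\pi\int Q\phi^0_{u_0}u_0^2 = |\nabla\phi^0_{u_0}|_2^2$, forcing the extra term to vanish). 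For \eqref{eq:I'a0}: $\mathcal I'_{0,\lambda}(u_{a_j,\lambda})\to\mathcal I'_{0,\lambda}(u_0) = 0$ by continuity of the derivative under strong convergence, and $\mathcal I'_{a_j,\lambda}(u_0)\to\mathcal I'_{0,\lambda}(u_0) = 0$ since $\mathcal I'_{a_j,\lambda}(u_0)$ differs from $\mathcal I'_{0,\lambda}(u_0)$ only through the term $\int Q(\phi^{a_j}_{u_0} - \phi^0_{u_0})u_0\,(\cdot)$, which tends to zero in $H^{-1}$ by the same Hölder estimate used above. I expect the main obstacle to be making the exclusion of the bad alternative $\ell\ge S^{3/2}$ fully rigorous with the \emph{explicit} constant $\mathfrak a_p$: one must carry the sublinear term's contribution through the Brezis--Lieb splitting and bound $|\int K|u_0|^{p+1}|$ and $|\int K|w_j|^{p+1}|$ by Hölder and Sobolev in a way that reproduces exactly the constant $\mathfrak a_p$ from Theorem \ref{th:a>0}, so that the strict inequality in the hypothesis is genuinely what does the job; secondarily, care is needed to ensure all constants in the uniform bounds are truly independent of $a\in(0,1]$.
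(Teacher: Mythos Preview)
Your plan is correct and reaches the same conclusion, but by a genuinely different route than the paper. The paper does not carry out a Brezis--Lieb splitting with $w_j=u_{a_j,\lambda}-u_0$; instead it observes that the family $\{u_{a_j,\lambda}\}$ behaves like a bounded Palais--Smale sequence (for a varying functional) and simply \emph{repeats the entire concentration-compactness machinery of Section~\ref{sec:PS}}: Lions' second lemma, the analysis of the Dirac masses $\mu_j,\zeta_j$, the escape-to-infinity quantities $\mu_\infty,\zeta_\infty$, the local $L^6$ convergence via bump functions, and finally Proposition~\ref{cor:PS}. The $a$-dependence enters only through the nonlocal term, for which the paper invokes the explicit convergence result \cite[Lemma~6.1]{dAvGS} (stated here as Lemma~\ref{lem:dS}): if $Qu_{a_j,\lambda}^2\to Qu_0^2$ in $L^{6/5}$ then $\phi^{a_j}_{u_{a_j,\lambda}}\to\phi^0_{u_0}$ in $D^{1,2}$ and $a_j\Delta\phi^{a_j}_{u_{a_j,\lambda}}\to 0$ in $L^2$. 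This is precisely the ``$a\to0$ analogue'' you correctly flagged as needed. What the paper's approach buys is economy: nothing new has to be proved, one just points back to Section~\ref{sec:PS}. What your Brezis--Lieb approach buys is directness: the dichotomy $\ell=0$ or $\ell\ge S^{3/2}$ is reached in a few lines without revisiting the measure-theoretic apparatus.

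Two small points about your sketch. First, your energy identity with multiplier $\tfrac12$ leaves the nonlocal term $-\tfrac14\int Q\phi^{a_j}_{u_{a_j}}u_{a_j}^2$ on the right-hand side with the wrong sign for a direct lower bound; the paper (and the definition of $\mathfrak a_p$) uses the multiplier $\tfrac14$, which kills the nonlocal term and leaves exactly $\tfrac14\|u\|^2+\tfrac{1}{12}|u|_6^6-\lambda\tfrac{3-p}{4(p+1)}\int K|u|^{p+1}$. With that choice the Brezis--Lieb split yields $\mathcal I_{a_j,\lambda}(u_{a_j,\lambda})\ge \tfrac13\ell+\theta_\lambda(u_0)+o(1)\ge \tfrac13\ell-\lambda^{\frac{2}{1-p}}|K|_{\frac{2}{1-p}}^{\frac{2}{1-p}}\mathfrak a_p+o(1)$, and the exact constant $\mathfrak a_p$ falls out without further work, resolving the obstacle you anticipated. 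Second, your reading of the limits \eqref{eq:Ia0}--\eqref{eq:I'a0} as ``bookkeeping'' once strong $H^1$ convergence and Lemma~\ref{lem:dS} are in hand matches the paper exactly.
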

In particular  the convergences in \eqref{eq:new} and \eqref{eq:I'a0} say that,
as one parameter goes to zero,  solutions of 
\eqref{eq:eq} are ``almost'' solutions of \eqref{eq:eqSP} 
(for the same fixed value of the other parameter), and viceversa.

Note that, for the minimax solutions $u_{a,\lambda,n}$ and $u_{0,\lambda,n}$, with fixed $\lambda$ and $n$, 
found in Theorem \ref{vegeta} and Theorem \ref{th:a=0},  Theorem \ref{th:convina} does not say that 
$$\lim_{a\to 0} u_{a,\lambda,n}  = u_{0,\lambda,n} \quad \text{ in } H^{1}(\mathbb R^{3})$$
but it just states that the limit is a solution (and not necessarily the corresponding minimax solution $u_{0,\lambda,n}$).

However this is what happens for the ground state solutions, namely when $n=1$.
\begin{theorem}\label{th:GS1}
Assume \eqref{Q} and \eqref{W}.
If $\{u_{a,\lambda,1}\}_{a\in(0,1]}$ is the family of ground state solutions of \eqref{eq:eq}, then
$$ \lim_{a\to 0} u_{a,\lambda,1} = u_{0,\lambda,1} \quad \text{ in }\ H^{1}(\mathbb R^{3}),$$
where $u_{0,\lambda,1}$ is  a ground state solution of \eqref{eq:eqSP}.
\end{theorem}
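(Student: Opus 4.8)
The plan is to identify the $H^1$-limit furnished by Theorem~\ref{th:convina} as a \emph{ground state} of the Schr\"odinger--Poisson system, by comparing the two ground state energies. Fix $\lambda\in(0,\Lambda)$, small enough that $\tfrac13 S^{3/2}-\lambda^{2/(1-p)}|K|_{\frac2{1-p}}^{2/(1-p)}\mathfrak a_p>0$ (shrinking $\Lambda$ if necessary), so that Theorem~\ref{th:convina} applies to the ground states, whose energy is negative (Theorem~\ref{vegeta}). Put
$$m_a:=\inf_{\mathcal S_{a,\lambda}}\mathcal I_{a,\lambda}=\mathcal I_{a,\lambda}(u_{a,\lambda,1})<0\qquad(a\ge0),$$
the second equality being part of the conclusions of Theorem~\ref{vegeta}/Theorem~\ref{th:a=0} (see also the remark following Theorem~\ref{th:}). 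By Theorem~\ref{th:convina}, along a subsequence $a_k\to0$ we have $u_{a_k,\lambda,1}\to\bar u$ in $H^1(\mathbb R^3)$ with $\bar u\in\mathcal S_{0,\lambda}$ and, by \eqref{eq:Ia0}, $\mathcal I_{0,\lambda}(\bar u)=\lim_k m_{a_k}$; moreover $\bar u\ge0$, since $u_{a,\lambda,1}\ge0$ for every $a$ and nonnegativity passes to $H^1$-limits. Because $\bar u$ solves \eqref{eq:eqSP} one has $\mathcal I_{0,\lambda}(\bar u)\ge m_0$, so the whole statement reduces to proving $\limsup_{a\to0}m_a\le m_0$: this forces $\mathcal I_{0,\lambda}(\bar u)=m_0$, i.e. $\bar u$ is a nonnegative ground state of \eqref{eq:eqSP}.

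To get $m_a\le m_0$ I would combine two ingredients. The first is a ground state characterization uniform in $a$, read off from the proof of Theorem~\ref{vegeta}: since $\int_{\mathbb R^3}Q(x)\phi^a_u u^2\ge0$ for \emph{every} $a\ge0$, the functional obeys the $a$-independent lower bound $\mathcal I_{a,\lambda}(u)\ge \tfrac12\|u\|^2-\tfrac1{6S^3}\|u\|^6-\tfrac{\lambda}{p+1}|K|_{\frac2{1-p}}\|u\|^{p+1}$ (Sobolev and H\"older), whose right-hand side is strictly positive on a nonempty interval of radii once $\lambda$ is small; choosing a single radius $R$ in that interval and above the common a~priori bound $R_\lambda:=\bigl(\lambda\tfrac{5-p}{2(p+1)}|K|_{\frac2{1-p}}\bigr)^{1/(1-p)}$ of the solutions, the genus scheme can be run on $\overline{B_R}$ for all $a\ge0$ simultaneously, its lowest minimax value equals $\inf_{\overline{B_R}}\mathcal I_{a,\lambda}$, and by Theorem~\ref{vegeta}/\ref{th:a=0} it coincides with $m_a$; thus $m_a=\inf_{\overline{B_R}}\mathcal I_{a,\lambda}$ for all $a\ge0$, \emph{with the same ball}. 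The second ingredient is the pointwise comparison $\mathcal I_{a,\lambda}(u)\le\mathcal I_{0,\lambda}(u)$ for all $u\in H^1(\mathbb R^3)$ and all $a>0$: the two reduced functionals differ only through the nonlocal term, and $\mathcal I_{0,\lambda}(u)-\mathcal I_{a,\lambda}(u)$ equals, up to a positive constant, $\intd\frac{e^{-|x-y|/a}}{|x-y|}Q(x)u^2(x)Q(y)u^2(y)\,dx\,dy\ge0$, because the Yukawa kernel $e^{-|\cdot|/a}/|\cdot|$ is positive definite (its Fourier transform is a positive multiple of $(|\xi|^2+a^{-2})^{-1}$). Combining, $m_a=\inf_{\overline{B_R}}\mathcal I_{a,\lambda}\le\inf_{\overline{B_R}}\mathcal I_{0,\lambda}=m_0$ for every $a>0$.

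Putting the pieces together, $m_0\le\mathcal I_{0,\lambda}(\bar u)=\lim_k m_{a_k}\le m_0$, so $\mathcal I_{0,\lambda}(\bar u)=m_0$ and $\bar u$ is a nonnegative ground state of the Schr\"odinger--Poisson system. Since every subsequence of $\{u_{a,\lambda,1}\}_{a\in(0,1]}$ has a further subsequence enjoying this property, the family converges, up to a subsequence, in $H^1(\mathbb R^3)$ to a nonnegative ground state of \eqref{eq:eqSP}, which we call $u_{0,\lambda,1}$ (and the convergence is for the whole family when such a ground state is unique); this is the assertion. I expect the main obstacle to be the first ingredient above: checking that the genus construction of Theorems~\ref{vegeta} and \ref{th:a=0} may be carried out on a single ball valid for all $a\ge0$, and that the lowest critical level it produces is genuinely the infimum of $\mathcal I_{a,\lambda}$ over \emph{all} solutions. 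The pointwise inequality and the passage $a\to0$ are then routine, the latter being already recorded in Theorem~\ref{th:convina}.
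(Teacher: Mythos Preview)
Your proposal is correct and follows essentially the same approach as the paper: apply Theorem~\ref{th:convina} to get a limit $\bar u\in\mathcal S_{0,\lambda}$ with $\mathcal I_{0,\lambda}(\bar u)=\lim_a\mathcal I_{a,\lambda}(u_{a,\lambda,1})$, then use the pointwise inequality $\mathcal I_{a,\lambda}\le\mathcal I_{0,\lambda}$ together with a uniform-in-$a$ ``minimum on a fixed ball'' characterization of the ground state level to conclude $\mathcal I_{0,\lambda}(\bar u)\le b_{0,\lambda,1}$. Regarding your main concern (the ``first ingredient''), the paper handles it implicitly and more cheaply: the truncation radius $t^{*}_{\lambda,0}$ in Section~\ref{sec:minmax} is independent of $a$, so $b_{a,\lambda,1}=\inf\widetilde{\mathcal I}_{a,\lambda}=\min_{\|u\|\le t^{*}_{\lambda,0}}\mathcal I_{a,\lambda}$ for every $a\ge0$, and since $\|u_{0,\lambda,1}\|\le t^{*}_{\lambda,0}$ (as $\widetilde{\mathcal I}_{0,\lambda}(u_{0,\lambda,1})<0$) the chain $\mathcal I_{a,\lambda}(u_{a,\lambda,1})\le\mathcal I_{a,\lambda}(u_{0,\lambda,1})\le\mathcal I_{0,\lambda}(u_{0,\lambda,1})$ follows without reworking the genus scheme.
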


%Observe that our results also put in evidence the role of the norm of $K$ in $L^{\frac{2}{1-p}}(\mathbb R^{3})$
%since as we will see in the proof, the constants $\Lambda, \mathfrak a_{p}, \mathfrak c_{p}$ are explicitly depending on $|K|_{\frac{2}{1-p}}$.

\subsubsection*{Notations}
As a matter of notations, $B_{r}(a)$ denotes the ball centered in $a\in \mathbb R^{3}$ with radius $r>0$, and $B^{c}_{r}(a)$
its complementary. If $a=0\in \mathbb R^{3}$ we simply write $B_{r}$ and $B_{r}^{c}$.

The Lebesgue measure $dx$  in the integrals will be always omitted.

%Given $q\in[2,6], C_{q}>0$ is the embedding constant of $L^{q}(\mathbb R^{3})$ into $H^{1}(\mathbb R^{3})$.

Moreover $c_{i}$ are positive and generic constant whose value is not really important and they can even change from
line to line.

Other notations will be introduced as soon as we need.

\subsubsection*{Organization of the paper}
The paper is organized as follows. In the next Section \ref{sec:PS}
we prove the Palais-Smale condition for the energy functional $\mathcal I_{a,\lambda}$. We will see that the range 
of validity of the Palais-Smale condition depends on $\lambda$.
In Section \ref{sec:minmax} a general minimax argument is implemented in order to understand 
the right geometry of the functional and then apply general theorems in Critical Point Theory.
In the final Section \ref{sec:Proofs} the proof of the results is given.

\medskip

In the remaining of the paper our hypothesis \eqref{Q} and \eqref{W} are implicitly assumed, without any
other reference.

\section{The Palais-Smale condition }\label{sec:PS}
In this and the next sections the results will be true for every $a\geq0$
which then has to be considered fixed; of course if $a=0$ we are dealing with the 
Schr\"odinger-Poisson system, if $a>0$ with the Schr\"odinger-Bopp-Podolsky one.
Moreover $\phi_{u}^{a}\in D^{1,2}$ or $\mathcal D$ accordingly.

%\textcolor{red}{ Also $\lambda$ has to be considered fixed, 
 %but eventually taking into account the restriction that may appear.}

Let us first recall a useful and fundamental condition in order to apply variational methods.
We say that a sequence $\{u_n\}_{n\in \mathbb N}\subset H^{1}(\mathbb{R}^3)$ is  a Palais-Smale sequence at the level $c\in\mathbb R$
for $\mathcal{I}_{a,\lambda}$, for short $(PS)_c$ sequence, if 
\begin{equation}\label{eq:PS}
\mathcal{I}_{a,\lambda}(u_n)\rightarrow c\quad\text{ and } \quad \mathcal{I}'_{a,\lambda}(u_n)\rightarrow 0.
\end{equation}
The  functional $\mathcal{I}_{a,\lambda}$ is said to satisfy the $(PS)_c$ condition if any $(PS)_c$ sequence contains a convergent subsequence,
and in this case the limit is a critical point.

\medskip

The aim  is to show that $\mathcal I_{a,\lambda}$ satisfies the $(PS)_{c}$ condition,
at least at suitable levels $c$.

So  let $a,\lambda$ be fixed and, in all this Section, let $\{u_n\}_{n\in \mathbb N}$ be a sequence satisfying  \eqref{eq:PS}
(of course the sequence depends also on $a$ and $\lambda$ but we omit these dependence).

%\textcolor{red}{Il fatto he  $p\in(0,1)$ si usa solo nella limitatezza delle PS sequences,
%per la convergenza forte non si usa! infatti lo abbiamo fatto con Giovany, libro Willem....}

\begin{lemma}\label{androide1}
    %For every $\lambda>0$, 
    The $(PS)_{c}$ sequence  $\{u_n\}_{n\in \mathbb N}$ is bounded in $H^{1}(\mathbb{R}^3)$.
%     in the following cases
%    \begin{itemize}
%    \item[a)] if $p\in(0,1)$ for every $\lambda>0$,
%    \item[b)] if $p=1$ for $\lambda \in(0, |K|_{q}^{-1} C_{2q'}^{-1})$.
%    \end{itemize}
\end{lemma}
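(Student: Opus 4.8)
The plan is to show that any $(PS)_c$ sequence $\{u_n\}$ is bounded in $H^1(\mathbb R^3)$ by combining the energy identity $\mathcal I_{a,\lambda}(u_n)\to c$ with the derivative identity $\mathcal I'_{a,\lambda}(u_n)[u_n]=o(\|u_n\|)$, and exploiting the good sign of the nonlocal term together with the sublinearity of the $K$-term. First I would write down the two quantities explicitly:
\begin{eqnarray*}
\mathcal I_{a,\lambda}(u_n) &=& \frac12\|u_n\|^2 + \frac14\int_{\mathbb R^3} Q(x)\phi^a_{u_n}u_n^2 - \frac16\int_{\mathbb R^3}|u_n|^6 - \frac{\lambda}{p+1}\int_{\mathbb R^3} K(x)|u_n|^{p+1},\\
\mathcal I'_{a,\lambda}(u_n)[u_n] &=& \|u_n\|^2 + \int_{\mathbb R^3} Q(x)\phi^a_{u_n}u_n^2 - \int_{\mathbb R^3}|u_n|^6 - \lambda\int_{\mathbb R^3} K(x)|u_n|^{p+1}.
\end{eqnarray*}
The combination that kills the critical term is $\mathcal I_{a,\lambda}(u_n) - \frac16\mathcal I'_{a,\lambda}(u_n)[u_n]$. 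This gives
$$
\mathcal I_{a,\lambda}(u_n) - \tfrac16\mathcal I'_{a,\lambda}(u_n)[u_n] = \tfrac13\|u_n\|^2 + \tfrac{1}{12}\int_{\mathbb R^3} Q(x)\phi^a_{u_n}u_n^2 - \lambda\Big(\tfrac{1}{p+1}-\tfrac16\Big)\int_{\mathbb R^3} K(x)|u_n|^{p+1}.
$$

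Next I would use the sign information recalled in the excerpt, namely $\int_{\mathbb R^3} Q(x)\phi^a_{u_n}u_n^2\geq 0$, to drop that term from the lower bound, so that
$$
\tfrac13\|u_n\|^2 \leq \mathcal I_{a,\lambda}(u_n) - \tfrac16\mathcal I'_{a,\lambda}(u_n)[u_n] + \lambda\Big(\tfrac{1}{p+1}-\tfrac16\Big)\int_{\mathbb R^3} K(x)|u_n|^{p+1}.
$$
On the left-over integral I would apply the Hölder inequality with exponents $\frac{2}{1-p}$ and $\frac{2}{p+1}$ together with the Sobolev embedding $H^1(\mathbb R^3)\hookrightarrow L^2(\mathbb R^3)$, using \eqref{W}, to get $\int_{\mathbb R^3} K(x)|u_n|^{p+1}\leq |K|_{\frac{2}{1-p}}\,|u_n|_2^{p+1}\leq C\,\|u_n\|^{p+1}$. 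For the right-hand side I would use the $(PS)_c$ property: $\mathcal I_{a,\lambda}(u_n)$ is bounded (it converges to $c$), and $|\mathcal I'_{a,\lambda}(u_n)[u_n]|\leq \|\mathcal I'_{a,\lambda}(u_n)\|_{H^{-1}}\|u_n\| = o(1)\|u_n\|\leq \|u_n\|$ for $n$ large. Putting this together yields an inequality of the form
$$
\tfrac13\|u_n\|^2 \leq C_1 + \|u_n\| + C_2\lambda\|u_n\|^{p+1}
$$
for $n$ large, with $p+1<2$.

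Finally, since $p+1<2$ and $1<2$, this is a superlinear-in-$\|u_n\|^2$ lower bound against a sublinear upper bound, so it forces $\{\|u_n\|\}$ to be bounded: if a subsequence had $\|u_n\|\to\infty$, dividing through by $\|u_n\|^2$ would give $\frac13\leq o(1)$, a contradiction. Hence $\{u_n\}$ is bounded in $H^1(\mathbb R^3)$. I do not expect a real obstacle here; the only point requiring a little care is the control of the sublinear term by $\|u_n\|^{p+1}$ via Hölder and Sobolev using exactly the summability in \eqref{W}, and the observation that the nonlocal term has a sign so it can simply be discarded in the estimate — both are routine.
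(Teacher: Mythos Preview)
Your proposal is correct and follows essentially the same approach as the paper: both take the linear combination $\mathcal I_{a,\lambda}(u_n)-\tfrac16\mathcal I'_{a,\lambda}(u_n)[u_n]$ to eliminate the critical term, discard the nonnegative nonlocal term, and control the sublinear $K$-term via H\"older with exponents $\tfrac{2}{1-p},\tfrac{2}{p+1}$ and $|u_n|_2\le\|u_n\|$. The resulting inequality $\tfrac13\|u_n\|^2\le c+o(1)+o(1)\|u_n\|+C\lambda\|u_n\|^{p+1}$ with $p+1<2$ forces boundedness, exactly as in the paper.
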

\begin{proof}
Indeed, since $p\in(0,1)$, the result follows by the estimates
\begin{eqnarray*}
   c+\varepsilon_n+\varepsilon_n \|u_{n}\|&=&\mathcal{I}_{a,\lambda}(u_n)-\frac{1}{6}\mathcal{I}'_{a,\lambda}(u_n)[u_n] \\
   &=& \frac{1}{3}\| u\|^{2}   +\frac{1}{12}\int_{\mathbb{R}^3} Q(x)\phi_{u_{n}}^{a} u_{n}^{2}
    - \lambda\left(\frac{1}{p+1}-\frac{1}{6} \right) \int_{\mathbb{R}^3} K(x)\vert u_n\vert^{p+1}\\
    &\geq& \frac{1}{3}\Vert u_n\Vert^{2}  -\lambda \frac{5-p}{6(p+1)} \vert K\vert_{\frac{2}{1-p}}\vert u_n\vert^{p+1}_{2}\\
%&\geq&\frac{1}{3}\Vert u_n\Vert^{2} -\lambda\frac{5-p}{6(p+1)} |K|_{\frac{2}{1-p}}C^{p+1}_{2}\Vert u_n\Vert^{p+1},
&\geq&\frac{1}{3}\Vert u_n\Vert^{2} -\lambda\frac{5-p}{6(p+1)} |K|_{\frac{2}{1-p}}\Vert u_n\Vert^{p+1},
\end{eqnarray*}
where $\varepsilon_{n}\to 0$.
\end{proof}

\begin{remark}
Note that the bound does not depends on $a\geq0$ and moreover  it is uniform in $\lambda$
(so does not depends on the functional)  for $\lambda$ in a bounded set.
In particular, we can say that, given $\ell>0$ there is $C_{\ell}>0$ such that for every $\lambda\in (0,\ell)$
and  $a\geq0$, any $(PS)_{c}$ sequence for $\mathcal I_{a,\lambda}$ is bounded by $C_{\ell}$. 

Observe also that  there cannot be  Palais-Smale sequences at arbitrary negative values $c$ for the functional.
\end{remark}

Hence, 
 without loss of generality, we can assume that  
\begin{eqnarray*}\label{androide5}
\nonumber &u_n \rightharpoonup u & \text{in} \ H^{1}(\mathbb{R}^3),\\
%\nonumber  &u_n \rightharpoonup u &  \text{in} \ L^{p}(\mathbb{R}^3) \ \text{with} \ 2\leq p\leq 6,\\
 & u_n \rightarrow  u & \text{in} \ L^{p}_{loc}(\mathbb{R}^3) \ \ \text{for} \ 1\leq p<6,\\
\nonumber & u_n(x) \rightarrow u(x) & \text{a.e  in} \ \mathbb{R}^3.
\end{eqnarray*}
In the remaining of this Section $u$ will be always the weak limit of the $(PS)_{c}$
sequence $\{ u_{n}\}_{n\in \mathbb N}$.

%     In virtue of Lemma \ref{androide1}, we know that $\{u_n\}$ is bounded in $H^{1}(\mathbb{R}^3)$. 
By Lions' second concentration compactness lemma in \cite{PLions}, there exist an at most countable index set $\mathcal{J}$, a sequence of points $\{x_j\}_{j\in \mathcal{J}} \subset \mathbb{R}^3,$ and two sets  $\{\mu_j\}_{j\in \mathcal{J}}, \{\zeta\}_{j\in \mathcal{J}} \subset {\mathbb{R}^{+}}$ such that,
in the weak sense of measures, 
 \begin{equation}\label{androide3}
   \nonumber \vert \nabla u_n\vert^{2}\rightharpoonup d\mu \geq  \vert \nabla u\vert^{2}+\sum_{j\in \mathcal{J}} \mu_j \delta_{x_j},\qquad
   u_n^{6}\rightharpoonup d\zeta =  u^{6}+\sum_{j\in \mathcal{J}}\zeta_j \delta_{x_j},
    \end{equation}
    where $\delta_{x_j} $ is the Dirac mass at $x_j\in \mathbb{R}^3$,    and
\begin{equation}\label{eq:munu}
S\zeta_j^{\frac{1}{3}} \leq  \mu_j.
\end{equation}
Here $S$  is the best Sobolev constant, so
    $$ S\vert w \vert^{2}_{6}\leq  | \nabla w |_{2}^{2}, \quad \forall w\in H^{1}(\mathbb{R}^3).$$
    
In particular, if $\varepsilon>0$ is small enough, and
$\varphi_{\varepsilon} \in C^{\infty}_{c}(\mathbb{R}^3;[0,1])$  is such that 
    \begin{equation}\label{eq:bump}
         \varphi_{\varepsilon}  \equiv  1  \text{ on }  B_{\varepsilon}(x_{j_{0}}) \ \ \text{ and } \ \ 
        \varphi_{\varepsilon}  \equiv  0   \text{ on }  B^{c}_{2\varepsilon}(x_{j_{0}}), 
    \end{equation}
    then
     \begin{eqnarray*}\label{var}
\lim_{n\to\infty} \displaystyle  \int_{\mathbb{R}^3} \vert \nabla u_n \vert^{2} \varphi_{\varepsilon}  
%&=&\int_{B(x_{j_0},2\varepsilon)} \vert \nabla u_n \vert^{2} \varphi+ \int_{B(x_{j_0},2\varepsilon)^{c}} \vert \nabla u_n \vert^{2} \varphi\\
 &= &\lim_{n\to\infty}\int_{B_{2\varepsilon}(x_{j_{0}})} \vert \nabla u_n \vert^{2} \varphi_{\varepsilon} 
 = \int_{B_{2\varepsilon}(x_{j_{0}})} \varphi_{\varepsilon} d\mu\\
 &\geq & \int_{B_{2\varepsilon}(x_{j_{0}})} \varphi_{\varepsilon}  \vert \nabla u\vert^{2}+ \int_{B_{2\varepsilon}(x_{j_{0}})} \varphi_{\varepsilon} \sum_{j\in \mathcal{J}} \mu_j \delta_{x_j}.
 %&\geq&\int_{B_{2\varepsilon}(x_{j_{0}})} \varphi_{\varepsilon} \sum_{j\in \mathcal{J}} \mu_j \delta_{x_j}.
  \end{eqnarray*}  
   We conclude that  
\begin{eqnarray}
    \lim_{\varepsilon \rightarrow 0} \lim_{n\rightarrow\infty}\int_{\mathbb{R}^3} \vert \nabla u_n \vert^{2} \varphi_{\varepsilon}
     &\geq & 
     \lim_{\varepsilon \rightarrow 0} \left( \int_{B_{2\varepsilon}(x_{j_{0}})} \varphi_{\varepsilon} \vert \nabla u \vert^{2} + 
 \int_{B_{2\varepsilon}(x_{j_{0}})} \varphi_{\varepsilon} %\sum_{j\in \mathcal{J}} \mu_j \delta_{x_j} 
\sum_{j\in \mathcal{J}} \mu_j \delta_{x_j}        \nonumber \right)\\
          &=&\mu_{{j}_{0}} %\lim_{\varepsilon \rightarrow 0} 
          \varphi_{\varepsilon}(x_{j_0})           \nonumber\\
          &=& \mu_{{j}_{0}} \label{eq:mu}.
\end{eqnarray}
 
 Analogously,
 \begin{eqnarray*}
    \lim_{n\rightarrow \infty}\int_{\mathbb{R}^3}|u_n|^{6}\varphi_{\varepsilon}&=& \lim_{n\rightarrow \infty}\int_{B_{2\varepsilon}(x_{j_{0}})}|u_n|^{6}\varphi_{\varepsilon}
 = \int_{B_{2\varepsilon}(x_{j_{0}})}\varphi_{\varepsilon} d\zeta\\
    &=& \int_{B_{2\varepsilon}(x_{j_{0}})} |u|^{6}\varphi_{\varepsilon} +  \int_{B_{2\varepsilon}(x_{j_{0}})} \varphi_{\varepsilon}
    \sum_{j\in \mathcal{J}}\zeta_j \delta_{x_j} ,
 \end{eqnarray*} 
 and we conclude that
% by arguments used above, we have that  
% \begin{eqnarray*}
%      \lim_{\varepsilon \rightarrow 0} \lim_{n\rightarrow\infty} \int_{\mathbb{R}^3}u_n^{6}\varphi_{\varepsilon}& \geq & \lim_{\varepsilon \rightarrow 0}\int_{B(x_{j_{0}},2\varepsilon)} u^{6}\varphi_{\varepsilon}+ \zeta_{j_0}=\zeta_{j_0}.
% \end{eqnarray*}
%Thus we have proved that 
\begin{equation}\label{eq:6}
     \lim_{\varepsilon \rightarrow 0} \lim_{n\rightarrow\infty} \int_{\mathbb{R}^3}|u_n|^{6}\varphi_{\varepsilon}=\zeta_{j_0}.
\end{equation}

    \begin{lemma}\label{lem:nu}
For each $j\in \mathcal J$ it is $\zeta_{j}\geq \mu_{j}.$
    \end{lemma}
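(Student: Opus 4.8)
The plan is the classical second--concentration--compactness argument of Lions: test the equation satisfied asymptotically by the $(PS)_c$ sequence against the localized function $\varphi_{\varepsilon}u_n$, where $\varphi_{\varepsilon}$ is the cut-off in \eqref{eq:bump} concentrating at the point $x_{j_{0}}$. Since $\{u_n\}$ is bounded in $H^{1}(\mathbb{R}^3)$ by Lemma \ref{androide1} and multiplication by the fixed function $\varphi_{\varepsilon}\in C^{\infty}_{c}(\mathbb{R}^3)$ is a bounded operation on $H^{1}(\mathbb{R}^3)$, the sequence $\{\varphi_{\varepsilon}u_n\}_n$ is bounded; hence $\mathcal{I}'_{a,\lambda}(u_n)\to 0$ forces $\mathcal{I}'_{a,\lambda}(u_n)[\varphi_{\varepsilon}u_n]\to 0$ as $n\to\infty$, for every fixed $\varepsilon$. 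Writing $\nabla(\varphi_{\varepsilon}u_n)=\varphi_{\varepsilon}\nabla u_n+u_n\nabla\varphi_{\varepsilon}$ and using $|u_n|^{4}u_n^{2}=|u_n|^{6}$, $|u_n|^{p-1}u_n^{2}=|u_n|^{p+1}$, this reads
$$\int_{\mathbb{R}^3}\varphi_{\varepsilon}|\nabla u_n|^{2}+\int_{\mathbb{R}^3}u_n\nabla u_n\cdot\nabla\varphi_{\varepsilon}+\int_{\mathbb{R}^3}\varphi_{\varepsilon}u_n^{2}+\int_{\mathbb{R}^3}Q(x)\phi^{a}_{u_n}u_n^{2}\varphi_{\varepsilon}-\int_{\mathbb{R}^3}|u_n|^{6}\varphi_{\varepsilon}-\lambda\int_{\mathbb{R}^3}K(x)|u_n|^{p+1}\varphi_{\varepsilon}=o_n(1).$$

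Then I would pass to the limit, first in $n$ and then in $\varepsilon$, term by term. By \eqref{eq:mu} the first term contributes, in the iterated limit, a quantity $\geq\mu_{j_0}$, and by \eqref{eq:6} the fifth term contributes exactly $\zeta_{j_0}$. The remaining four terms vanish: the third because $u_n\to u$ in $L^{2}_{loc}(\mathbb{R}^3)$ and then $\int\varphi_{\varepsilon}u^{2}\to 0$ since $\suport\varphi_{\varepsilon}\subset B_{2\varepsilon}(x_{j_{0}})$ shrinks to a point; the fourth because $\int Q\phi^{a}_{u_n}u_n^{2}\varphi_{\varepsilon}\to\int Q\phi^{a}_{u}u^{2}\varphi_{\varepsilon}$ by \emph{vii-c)} of Lemma \ref{lemmaphi}, which in turn tends to $0$ as $\varepsilon\to 0$ by dominated convergence since $Q\phi^{a}_{u}u^{2}\in L^{1}(\mathbb{R}^3)$; the sixth because $K|u_n|^{p+1}\varphi_{\varepsilon}\to K|u|^{p+1}\varphi_{\varepsilon}$ in $L^{1}(\mathbb{R}^3)$ (H\"older with exponents $\tfrac{2}{1-p}$ and $\tfrac{2}{1+p}$, together with the local strong convergence and the continuity of the Nemytskii operator $v\mapsto|v|^{p+1}$ from $L^{2}$ to $L^{2/(p+1)}$), and then $\int K|u|^{p+1}\varphi_{\varepsilon}\to 0$. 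The only term requiring some care is the second: by the Cauchy--Schwarz inequality it is bounded by $|\nabla u_n|_{2}(\int u_n^{2}|\nabla\varphi_{\varepsilon}|^{2})^{1/2}$; letting $n\to\infty$ (using $u_n\to u$ in $L^{2}$ on the fixed support of $\nabla\varphi_{\varepsilon}$) and then H\"older together with the scale invariance of $|\nabla\varphi_{\varepsilon}|_{3}$ in $\mathbb{R}^{3}$, one obtains $\limsup_n|\,\cdots\,|\leq C\,|u|_{L^{6}(B_{2\varepsilon}(x_{j_{0}}))}\to 0$ as $\varepsilon\to 0$.

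Collecting everything, the iterated limit of the displayed identity yields $0\geq\mu_{j_0}-\zeta_{j_0}$, that is $\zeta_{j_0}\geq\mu_{j_0}$; since $j_0\in\mathcal{J}$ was arbitrary this proves the lemma (and, combined with \eqref{eq:munu}, it gives $\zeta_j\geq S^{3/2}$ whenever $\zeta_j\neq 0$, which is what is needed afterwards). The only, and rather mild, obstacle is organizational: since the second term has merely a $\limsup$ bound, one should either carry the whole computation at the level of $\liminf/\limsup$ until the final inequality, or remark that the limit in $n$ of the second term actually exists because all the other terms in the identity have limits in $n$.
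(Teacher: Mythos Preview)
Your proof is correct and follows exactly the paper's approach: test $\mathcal{I}'_{a,\lambda}(u_n)$ against $\varphi_{\varepsilon}u_n$, expand, and pass to the iterated limit term by term using \eqref{eq:mu}, \eqref{eq:6}, local strong convergence, and \emph{vii-c)} of Lemma~\ref{lemmaphi}. The only minor caveat is that the ``scale invariance of $|\nabla\varphi_{\varepsilon}|_{3}$'' you invoke for the cross term needs $|\nabla\varphi_{\varepsilon}|\leq C/\varepsilon$, which is the extra assumption \eqref{phi} the paper adds rather than just \eqref{eq:bump}.
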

    \begin{proof}
    Fixed $j_{0}\in \mathcal J$ and $\varepsilon>0$, define   a cut-off function 
    $\varphi_{\varepsilon} \in C^{\infty}_{c}(\mathbb{R}^3;[0,1])$  such that 
    \begin{equation}\label{phi}
         \varphi_{\varepsilon}  \equiv  1  \text{ on }  B_{\varepsilon}(x_{j_{0}}), \ \  
        \varphi_{\varepsilon}  \equiv  0   \text{ on }  B^{c}_{2\varepsilon}(x_{j_{0}}),  \ \
        \vert \nabla\varphi_{\varepsilon} \vert \leq {2}/{\varepsilon}.
    \end{equation}
Now,
\begin{eqnarray}\label{estimate}
     %\textcolor{red}{\varepsilon_n}&=&
  \mathcal{I}'_{a,\lambda}(u_n)[\varphi_{\varepsilon} u_n] 
    &=& \int_{\mathbb{R}^3} (\nabla u_n \nabla(\varphi_{\varepsilon} u_n)+  u_n^{2} \varphi_{\varepsilon}) +  \int_{\mathbb{R}^3} Q(x) \phi^{a}_{u_n} u_n^{2} \varphi_{\varepsilon} \nonumber \\ 
    &&-\int_{\mathbb{R}^3} \vert u_n\vert^6 \varphi_{\varepsilon} 
- \lambda \int_{\mathbb{R}^3} K(x)\vert u_n \vert^{p+1} \varphi_{\varepsilon} \nonumber \\
 &  = &  \int_{\mathbb{R}^3} u_n \nabla u_n \nabla \varphi_{\varepsilon}+  \int_{\mathbb{R}^3} \vert \nabla u_n\vert^{2}\varphi_{\varepsilon}+  \int_{\mathbb{R}^3}  u_{n}^{2} \varphi_{\varepsilon}  +  \int_{\mathbb{R}^3} Q(x) \phi^{a}_{u_n} u_n^{2} \varphi_{\varepsilon} \\
&& -\int_{\mathbb{R}^3} \vert u_n\vert^6 \varphi_{\varepsilon} - \lambda \int_{\mathbb{R}^3} K(x)\vert u_n \vert^{p+1} \varphi_{\varepsilon}\nonumber
%&=:&\sum_{i=1}^{6}I_{i, n}
\end{eqnarray}
and since the sequence $\{u_n\}_{n\in \mathbb N}$ is  bounded, the sequence  $\{\varphi_{\varepsilon} u_n \}_{n\in \mathbb N}$ is  bounded too in $H^{1}(\mathbb{R}^3)$;
 thus 
 \begin{equation}\label{eq:zero}
 \lim_{\varepsilon\to 0}\lim_{n\to\infty}       \mathcal{I}'_{a,\lambda}(u_n)[\varphi_{\varepsilon} u_n] 
=0.
 \end{equation}

Let us estimate  each term in the right hand side of  \eqref{estimate} by taking first the limit as $n\to\infty$ and then as $\varepsilon\to 0$.

By using  the H\"older inequality,   since $|u_n \nabla\varphi_{\varepsilon}| \in  L^{2}(\mathbb{R}^{3})$, we get
     \begin{eqnarray*}\label{girasol}
    0\leq \int_{\mathbb{R}^3} \vert u_n \nabla u_n \nabla \varphi_{\varepsilon}\vert &=&
    \int_{B_{2\varepsilon}(x_{j_{0}})}  \vert u_n \nabla u_n \nabla \varphi_{\varepsilon}\vert\\
     &\leq & \left(\int_{B_{2\varepsilon}(x_{j_{0}})}  \vert u_n \nabla \varphi_{\varepsilon}\vert^{2}\right)^{\frac12}\left(\int_{B_{2\varepsilon}(x_{j_{0}})}  \vert\nabla u_n \vert^{2}\right)^{\frac12} \\
     &\leq& \left(\int_{B_{2\varepsilon}(x_{j_{0}})}  \vert u_n \nabla \varphi_{\varepsilon}\vert^{2}\right)^{\frac12} \|u_{n}\|^{2}\\
     &\leq& c_{1} \left(\int_{B_{2\varepsilon}(x_{j_{0}})}  \vert u_n \nabla \varphi_{\varepsilon}\vert^{2}\right)^{\frac12}.
 \end{eqnarray*}
 Since $u_n \rightarrow u $ in $L^{2}(B_{2\varepsilon}(x_{j_{0}}))$, then by the Dominated Convergence Theorem and the H\"older inequality we get
 \begin{eqnarray*}
 0\leq \lim_{n\rightarrow\infty}
     \int_{\mathbb{R}^3} \vert u_n \nabla u_n \nabla \varphi_{\varepsilon}\vert &\leq&
     c_{1}\left(\int_{B_{2\varepsilon}(x_{j_{0}})}  \vert u\nabla \varphi_{\varepsilon}\vert^{2}\right)^{\frac12}\\
     &\leq& c_{1}
  \left(\int_{B_{2\varepsilon}(x_{j_{0}})}  |u|^{6}\right)^{\frac16}\left(\int_{B_{2\varepsilon}(x_{j_{0}})}  \vert\nabla \varphi_{\varepsilon} \vert^{3}\right)^{\frac13}\\
     &\leq & c_{2}\left(\int_{B_{2\varepsilon}(x_{j_{0}})}  | u|^{6}\right)^{\frac16},
 \end{eqnarray*}
 from which
 \begin{equation}\label{eq:gradgrad}
      \lim_{\varepsilon \rightarrow 0} \lim_{n\rightarrow\infty}
     \int_{\mathbb{R}^3} \vert u_n \nabla u_n \nabla \varphi_{\varepsilon}\vert =0.
 \end{equation}

The second and fifth terms have been already estimated in \eqref{eq:mu} and \eqref{eq:6}.

For the third term,
%\begin{itemize}
%    \item $\displaystyle\int_{\mathbb{R}^3} u_n^{2} \varphi_{\varepsilon}$
%\end{itemize}
%\remark We are not considering the potential $V.$\\
by (\ref{phi}) and since $u_n \rightarrow  u$ in  $L^{p}_{loc}(\mathbb{R}^3)$ for $1\leq p<6$, 
%from
%$$
   %  \displaystyle\int_{\mathbb{R}^3} u_n^{2} \varphi_{\varepsilon}  = \int_{B_{2\varepsilon}%(x_{j_{0}})} u_n^{2} \varphi_{\varepsilon}  \leq  \int_{B_{2\varepsilon}(x_{j_{0}})} u_n^{2} 
    %$$
 it follows that
$$
   0\leq    \lim_{n\rightarrow \infty}\int_{\mathbb{R}^3} u_n^{2} \varphi_{\varepsilon} 
=   \lim_{n\to\infty}\int_{B_{2\varepsilon}(x_{j_{0}})} u_n^{2} \varphi_{\varepsilon}  \leq \lim_{n\rightarrow \infty}\int_{B_{2\varepsilon}(x_{j_{0}})} u_n^{2}=\int_{B_{2\varepsilon}(x_{j_{0}})} u^2,
$$
%It is clear that 
%\begin{eqnarray*}
 %   \lim_{\varepsilon\rightarrow 0}\lim_{n\rightarrow 0} %\int_{\mathbb{R}^3} u_n^{2} \varphi_{\varepsilon}=\int_{B(x_{j_{0}},2\varepsilon)} u^{2} =0
%\end{eqnarray*}
and consequently
\begin{eqnarray}\label{eq:quad}
  \lim_{\varepsilon\rightarrow 0}\lim_{n\rightarrow \infty}  \int_{\mathbb{R}^3} u_n^{2} \varphi_{\varepsilon} = 0.
\end{eqnarray}

%\begin{itemize}
%    \item $ \displaystyle\int_{\mathbb{R}^3} Q(x)\phi_{u_{n}}^{a}u_n^{2}\varphi_{\varepsilon}$
%\end{itemize}
The fourth term is easily estimated taking into account
\emph{vii-c)} %\eqref{vii:lemmaphi}
 of Lemma    \ref{lemmaphi}, in fact
\begin{eqnarray*}
    \lim_{n\rightarrow \infty}\int_{\mathbb{R}^3} Q(x)\phi_{u_{n}}^{a}u_n^{2}\varphi_{\varepsilon}
    = \int_{\mathbb R^{3}} Q(x)\phi_{u}^{a}u^{2}\varphi_{\varepsilon}= \int_{B_{2\varepsilon}(x_{j_{0}})} Q(x)\phi_{u}^{a}u^{2}
\end{eqnarray*}
therefore, we conclude that
\begin{equation}\label{eq:Q}
    \lim_{\varepsilon\rightarrow 0}\lim_{n\rightarrow \infty} \int_{\mathbb{R}^3} Q(x)\phi_{u_{n}}^{a}u_n^{2}\varphi_{\varepsilon}=0.
\end{equation}
%For what concerns the fifth term, by  (\ref{androide3}), 
%$ u_n^{6}\rightharpoonup d\zeta$ this is provided that $\displaystyle \int_{B(x_{j_{0}}, 2\varepsilon)}  u_n^{6}\varphi_{\varepsilon} \rightarrow  \int_{B(x_{j_{0}}, 2\varepsilon)} \varphi_{\varepsilon} d\zeta$, for all $\varphi_{\varepsilon}\in C_{c}^{\infty}(B(x_{j_{0}},2\varepsilon))$. Then 
%\begin{eqnarray*}
%    \lim_{n\rightarrow \infty}\int_{\mathbb{R}^3}u_n^{6}\varphi_{\varepsilon}&=& \lim_{n\rightarrow \infty}\int_{B_{2\varepsilon}(x_{j_{0}})}u_n^{6}\varphi_{\varepsilon}\\
%    &=& \int_{B_{2\varepsilon}(x_{j_{0}})}\varphi_{\varepsilon} d\zeta\\
%    &=& \int_{B_{2\varepsilon}(x_{j_{0}})} u^{6}\varphi_{\varepsilon} +  \int_{B_{2\varepsilon}(x_{j_{0}})} \varphi_{\varepsilon}
%    \sum_{j\in \mathcal{J}}\zeta_j \delta_{x_j} ,
% \end{eqnarray*} 
% and then, as before, 
% by arguments used above, we have that  
% \begin{eqnarray*}
%      \lim_{\varepsilon \rightarrow 0} \lim_{n\rightarrow\infty} \int_{\mathbb{R}^3}u_n^{6}\varphi_{\varepsilon}& \geq & \lim_{\varepsilon \rightarrow 0}\int_{B(x_{j_{0}},2\varepsilon)} u^{6}\varphi_{\varepsilon}+ \zeta_{j_0}=\zeta_{j_0}.
% \end{eqnarray*}
%Thus we have proved that 
%\begin{equation}\label{eq:6}
%     \lim_{\varepsilon \rightarrow 0} \lim_{n\rightarrow\infty} \int_{\mathbb{R}^3}u_n^{6}\varphi_{\varepsilon}=\zeta_{j_0}.
%\end{equation}
%\begin{itemize}
%    \item $\displaystyle \int_{\mathbb{R}^3}K(x)\vert u_n\vert ^{p+1}\varphi_{\varepsilon}$
%\end{itemize}

Finally, using the assumption  (\ref{W}) and  $ \vert u_n \vert^{p+1} \in L^{2/(p+1)}(\mathbb{R}^3)$ with $0<p<1 $, applying  the H\"older inequality, it follows that
\begin{eqnarray*}%\label{wv}
0\leq \lim_{n\to\infty}\int_{\mathbb{R}^3}|K(x)|\vert u_n\vert ^{p+1}\varphi_{\varepsilon} &=&
  \lim_{n\to\infty} \int_{B_{2\varepsilon}(x_{j_{0}})}|K(x)|\vert u_n\vert ^{p+1}\varphi_{\varepsilon}\\
  &=& \int_{B_{2\varepsilon}(x_{j_{0}})}|K(x)|\vert u\vert ^{p+1}\varphi_{\varepsilon}\\
  &\leq&  \int_{B_{2\varepsilon}(x_{j_{0}})}|K(x)|\vert u\vert ^{p+1},
%& \leq&\lim_{n\to\infty} \vert u_n\vert_{2}^{p+1} \vert K \vert_{2/(1-p)}
\end{eqnarray*}
%In addition, we know that $ u_n \rightarrow u $ in $L^{2}_{loc}(\mathbb{R}^3)$ implies that $ \vert u_n \vert^{p+1} \rightarrow \vert u \vert^{p+1}$ in $L^{2}(B(x_{j_{0}}, 2\varepsilon))$. We obtain 
%\begin{equation}\label{local}
%    \vert u_n \vert^{p+1}_{2} \vertK\vert_{2/(1-p)} \rightarrow \vert u \vert^{p+1}_{2}\vertK\vert_{2/(1-p)}, \ in \ L^2( B(x_{j_{0}}, 2\varepsilon)).
%\end{equation}
%By (\ref{local}) and (\ref{wv}) implies that 
%\begin{equation*}
%    \lim_{n\rightarrow \infty} \int_{B(x_{j_0},2\varepsilon)}K(x)\vert u_n\vert ^{p+1}= \int_{B(x_{j_0},2\varepsilon)}K(x)\vert u\vert ^{p+1}, 
%\end{equation*}
%finalizing, we have that 
from which
\begin{equation}\label{eq:ultimo}
     \lim_{\varepsilon\rightarrow 0}\lim_{n\rightarrow \infty} \int_{\mathbb{R}^3}K(x)\vert u_n\vert ^{p+1}\varphi_{\varepsilon}=0.
\end{equation}
From \eqref{estimate} and \eqref{eq:zero}, making use of 
\eqref{eq:mu}, \eqref{eq:6}, \eqref{eq:gradgrad}-\eqref{eq:ultimo},
we get
$     \zeta_{j_{0}}\geq \mu_{j_{0}}.$
%that combined  with  \eqref{eq:munu}, gives the conclusion.
% \begin{equation}\label{androide4}
%    S^{3/2}\leq \zeta_{j_{0}} \qquad \text{or} \qquad \zeta_{j_{0}}=0.
 %\end{equation}
 %
% 
%
%
%\textcolor{red}{In case $p=1$ the same conclusion is achieved since 
%\begin{eqnarray*}
%\lim_{n\to\infty}\int_{\mathbb{R}^3}K(x) u_n^{2}\varphi_{\varepsilon}
 % = \int_{B_{2\varepsilon}(x_{j_{0}})}K(x) u^{2}\varphi_{\varepsilon}
  %\leq  \int_{B_{2\varepsilon}(x_{j_{0}})}K(x) u^{2},
%\end{eqnarray*}
%an  we conclude as before.}
 \end{proof}
 %
 %
 %
 %
 
% \begin{remark}
% Looking at the proof of the previous Lemma,
% we see that actually we do not need to estimate the third and fourth term, namely
%$ \int_{\mathbb{R}^3} u_n^{2}\varphi_{\varepsilon}$ and 
%$ \int_{\mathbb{R}^3} Q(x)\phi_{u_{n}}^{a}u_n^{2}\varphi_{\varepsilon}$; in fact they are positive
%and then we could even get rid of them. 
% \end{remark}

 \begin{corollary}\label{cor:zeta}
     For every $j\in \mathcal J$,
    $$\text{either } \ \zeta_{j} = 0 \ \text{ or }\ \zeta_{j}\geq S^{\frac32}.$$
    In particular, if $\zeta_{j}\geq S^{\frac32}$ then
$$ \frac{1}{4}\mu_{j} +\frac{1}{12}\zeta_{j} \geq \frac{1}{3}S^{\frac32}.$$
 \end{corollary}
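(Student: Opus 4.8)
The plan is to simply combine the two inequalities already at our disposal: the Sobolev-type inequality \eqref{eq:munu}, namely $S\zeta_j^{1/3}\leq\mu_j$, and the reverse-type bound $\zeta_j\geq\mu_j$ just obtained in Lemma \ref{lem:nu}. Chaining them gives
\begin{equation*}
S\zeta_j^{1/3}\leq\mu_j\leq\zeta_j\qquad\text{for every }j\in\mathcal J.
\end{equation*}
If $\zeta_j>0$ we may divide the outer inequality $S\zeta_j^{1/3}\leq\zeta_j$ by $\zeta_j^{1/3}>0$ to obtain $S\leq\zeta_j^{2/3}$, i.e. $\zeta_j\geq S^{3/2}$; otherwise $\zeta_j=0$. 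This establishes the dichotomy.

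For the second assertion, suppose $\zeta_j\geq S^{3/2}$. Feeding this back into \eqref{eq:munu} we get
\begin{equation*}
\mu_j\geq S\zeta_j^{1/3}\geq S\,(S^{3/2})^{1/3}=S\cdot S^{1/2}=S^{3/2},
\end{equation*}
so that both $\mu_j\geq S^{3/2}$ and $\zeta_j\geq S^{3/2}$. Therefore
\begin{equation*}
\frac14\mu_j+\frac1{12}\zeta_j\geq\frac14 S^{3/2}+\frac1{12}S^{3/2}=\frac{3+1}{12}S^{3/2}=\frac13 S^{3/2},
\end{equation*}
which is the claimed estimate.

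There is no real obstacle here: the corollary is a purely algebraic consequence of Lemma \ref{lem:nu} and \eqref{eq:munu}, and the only point worth a word is that the division step requires $\zeta_j\neq0$, which is precisely why the statement is phrased as an alternative. The coefficients $1/4$ and $1/12$ are exactly those appearing in the decomposition $\mathcal I_{a,\lambda}(u_n)-\tfrac16\mathcal I'_{a,\lambda}(u_n)[u_n]$ used in Lemma \ref{androide1}, so this corollary is what will later let us rule out concentration when the energy level $c$ lies below $\tfrac13 S^{3/2}$ (up to the $\lambda$-dependent correction).
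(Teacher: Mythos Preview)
Your proof is correct and follows essentially the same route as the paper: chain $S\zeta_j^{1/3}\leq\mu_j\leq\zeta_j$ to get the dichotomy, then use $\mu_j\geq S\zeta_j^{1/3}\geq S^{3/2}$ for the second estimate (the paper instead writes $\tfrac14\mu_j+\tfrac1{12}\zeta_j\geq\tfrac13\mu_j\geq\tfrac13 S\zeta_j^{1/3}\geq\tfrac13 S^{3/2}$, which is the same idea rearranged). One small inaccuracy in your closing commentary: the coefficients $\tfrac14$ and $\tfrac1{12}$ arise from the combination $\mathcal I_{a,\lambda}(u_n)-\tfrac14\mathcal I'_{a,\lambda}(u_n)[u_n]$ used just before Lemma~\ref{lem:zero}, not from the $\tfrac16$-combination of Lemma~\ref{androide1}.
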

\begin{proof}
By Lemma \ref{lem:nu} and \eqref{eq:munu} it is
$\zeta_{j}\geq \mu_{j}\geq S\zeta_{j}^{\frac13}$ and the first conclusion follows.
In particular, if $\zeta_{j}\geq S^{\frac32}$ we infer
 \begin{eqnarray*}\label{eq:fund}
 \frac{1}{4}\mu_{j} +\frac{1}{12}\zeta_{j} \geq \frac{1}{4}\mu_{j} + \frac{1}{12}\mu_{j}
 =\frac13 \mu_{j}
 \geq\frac13 S\zeta_{j}^{\frac13}
\geq\frac13 S S^{\frac12} = \frac{1}{3}S^{\frac32}
 \end{eqnarray*}
concluding the proof.
\end{proof}

Since $\{u_{n}\}_{n\in \mathbb N}$ is  a $(PS)_{c}$  sequence, we know that
\begin{eqnarray*}%\label{eq:positive}
c&=& \lim_{n\to\infty} \left( \mathcal{I}_{a,\lambda}(u_n)-\frac{1}{4} \mathcal{I}_{a,\lambda}'(u_n)[u_n] \right) \nonumber\\
&=& \lim_{n\to\infty}
  %&=& \lim_{n\rightarrow \infty}\left[ \frac{1}{2}\int_{\mathbb{R}^3}\vert \nabla u_n\vert^{2} +\frac{1}{2}\int_{\mathbb{R}^3} \vert u_n\vert ^2+ \frac{1}{4} \int_{\mathbb{R}^3} Q(x)u_n^{2}\phi_{u_n}^{a} -\frac{1}{6}\int_{\mathbb{R}^3} \vert u_n\vert ^{6}\right. \\
  %&&  -\frac{\lambda}{p+1}\int_{\mathbb{R}^3} K(x)\vert u_n\vert^{p+1} -\frac{1}{4}\left( \int_{\mathbb{R}^3}\vert \nabla u_n\vert^{2} +\int_{\mathbb{R}^3} \vert u_n\vert ^2+  \int_{\mathbb{R}^3} Q(x)u_n^{2}\phi_{u_n}^{a} \right.\\
  %&& \left. \left. -\int_{\mathbb{R}^3} \vert u_n\vert ^{6} -\lambda \int_{\mathbb{R}^3}K(x) \vert u_n \vert^{p+1} \right) \right]\\ 
\left( \frac{1}{4}\int_{\mathbb{R}^3}\vert \nabla u_n\vert^{2} +\frac{1}{4}\int_{\mathbb{R}^3} \vert u_n\vert ^2+ \frac{1}{12}\int_{\mathbb{R}^3} \vert u_n\vert ^{6} -\lambda \frac{3-p}{4(p+1)}\int_{\mathbb{R}^3}K(x) \vert u_n \vert^{p+1} \right)\nonumber \\
 &\geq& \lim_{n\to\infty} \left(\frac{1}{4}\int_{\mathbb{R}^3}\vert \nabla u_n\vert^{2} 
 + \frac{1}{12}\int_{\mathbb{R}^3} \vert u_n\vert ^{6}  +  \theta_{\lambda}(u_{n}) \right)
% &\geq&  \lim_{n\to\infty} \frac{1}{4}\int_{\mathbb R^{3}} |\nabla u_{n}|^{2} + \frac{1}{12}\int_{\mathbb R^{3}}|u_{n}|^{6} \\
% &\geq&\lim_{n\to\infty} \frac{1}{4}\int_{\mathbb R^{3}} |\nabla u_{n}|^{2}\psi_{R} + \frac{1}{12}\int_{\mathbb R^{3}}|u_{n}|^{6}\psi_{R} -\varepsilon  \\
 %&\geq& \frac14 \int_{B(0,R)}|\nabla u|^{2} \psi_{R} +\int_{B(0,R)}\psi_{R} \sum_{j\in \mathcal{J}} \mu_j \delta_{x_j} 
\end{eqnarray*}
where 
\begin{equation}\label{eq:t}
\theta_{\lambda}(u_{n}) = 
\frac14 |u_{n}|_{2}^{2} - \lambda\frac{3-p}{4(p+1)}|K|_{\frac{2}{1-p}}|u_{n}|_{2}^{p+1}.
\end{equation}
 Then it is natural to consider the  function $g_{\lambda}(t) = c_{1}t^{2} - \lambda c_{2}t^{p+1}$, and  straightforward computations
 show that
 \begin{equation*}%\label{eq:minimizando}
 g_{\lambda}(t) \geq \min_{t\geq0} g_{\lambda}(t) = -  \lambda^{\frac{2}{1-p}}(1-p)\left(\frac{c_{2}}{2} \right)^{\frac{2}{1-p}} \left( \frac{p+1}{c_{1}}\right)^{\frac{1+p}{1-p}}.
 \end{equation*} 
Applying this to our function \eqref{eq:t} we see that, defining
 \begin{equation*}%\label{eq:a}
  \mathfrak a_{p} :=  \frac{1-p}{1+p} \left(\frac{(3-p)^{2}}{4^{2-p}} \right)^{\frac{1}{1-p}}>0,
 \end{equation*}
 we have
% \begin{equation}\label{eq:a2}
 $\theta_{\lambda}(u_{n}) \geq  -\lambda^{\frac{2}{1-p}} |K|_{\frac{2}{1-p}}^{\frac{2}{1-p}}
  \mathfrak a_{p}.
  $
  %\end{equation}
 From now on $ \mathfrak a_{p}$ is the constant given above. 

Now, let  $j_{0}\in \mathcal J$ and 
 take  $\varphi_{\varepsilon}\in C_{c}^{\infty}(\mathbb R^{3};[0,1])$ a bump centered in 
$x_{j_{0}}$ (hence satisfying  \eqref{eq:bump}). We see that
 \begin{eqnarray*}
 c&\geq&   \lim_{n\to\infty} \left(\frac{1}{4}\int_{\mathbb{R}^3}\vert \nabla u_n\vert^{2} \varphi_{\varepsilon} + 
   \frac{1}{12}\int_{\mathbb{R}^3} \vert u_n\vert ^{6}  \varphi_{\varepsilon}
 \right) - \lambda^{\frac{2}{1-p}}  |K|_{\frac{2}{1-p}}^{\frac{2}{1-p}} \mathfrak a_{p} \\ 
 &\geq& \frac14 \int_{\mathbb R^{3}}|\nabla u|^{2}\varphi_{\varepsilon}  +\frac14\mu_{j_{0}}+
 \frac{1}{12}\int_{\mathbb R^{3}}|u|^{6}\varphi_{\varepsilon} + \frac{1}{12}\zeta_{j_{0}}-\lambda^{\frac{2}{1-p}}
 |K|_{\frac{2}{1-p}}^{\frac{2}{1-p}} \mathfrak a_{p} .
 \end{eqnarray*}
Passing to the limit as $\varepsilon\to \infty$,  
we find
\begin{equation}\label{eq:stimac}
c\geq \frac14 \int_{\mathbb R^{3}}|\nabla u|^{2} +
 \frac{1}{12}\int_{\mathbb R^{3}}|u|^{6}+ \frac14\mu_{j_{0}}+\frac{1}{12}\zeta_{j_{0}}-\lambda^{\frac{2}{1-p}}   |K|_{\frac{2}{1-p}}^{\frac{2}{1-p}}\mathfrak a_{p} 
\end{equation}
and we immediately deduce  the next  result.
 \begin{lemma}\label{lem:zero}
If
 $$c<\frac{1}{3}S^{3/2} - \lambda^{\frac{2}{1-p}}  |K|_{\frac{2}{1-p}}^{\frac{2}{1-p}} \mathfrak a_{p}$$ 
  then, for every $j\in \mathcal J$, it is $\zeta_{j} = 0$.
In particular, by Lemma \ref{lem:nu}, it is   $\mu_{j}=0$.
 \end{lemma}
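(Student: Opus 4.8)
The plan is to argue by contradiction, exploiting the estimate \eqref{eq:stimac} already established together with the dichotomy in Corollary \ref{cor:zeta}. Suppose that for some $j_{0}\in \mathcal J$ one has $\zeta_{j_{0}}\neq 0$. Then Corollary \ref{cor:zeta} forces $\zeta_{j_{0}}\geq S^{3/2}$, and consequently
$$\frac14 \mu_{j_{0}} + \frac{1}{12}\zeta_{j_{0}} \geq \frac13 S^{3/2}.$$

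First I would insert this lower bound into \eqref{eq:stimac}. Since the two remaining terms $\frac14\int_{\mathbb R^{3}}|\nabla u|^{2}$ and $\frac{1}{12}\int_{\mathbb R^{3}}|u|^{6}$ are nonnegative, discarding them yields
$$c \;\geq\; \frac13 S^{3/2} - \lambda^{\frac{2}{1-p}}|K|_{\frac{2}{1-p}}^{\frac{2}{1-p}}\mathfrak a_{p},$$
which directly contradicts the hypothesis $c< \frac13 S^{3/2} - \lambda^{\frac{2}{1-p}}|K|_{\frac{2}{1-p}}^{\frac{2}{1-p}}\mathfrak a_{p}$. Hence $\zeta_{j}=0$ for every $j\in \mathcal J$. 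The assertion on $\mu_{j}$ then follows immediately: by Lemma \ref{lem:nu} we have $0\leq \mu_{j}\leq \zeta_{j}=0$, so $\mu_{j}=0$.

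There is essentially no obstacle at this final step: all the analytic content — the second concentration--compactness splitting, the convergence facts \emph{vii-c)} of Lemma \ref{lemmaphi}, and the term-by-term bookkeeping of $\mathcal I'_{a,\lambda}(u_{n})[\varphi_{\varepsilon}u_{n}]$ that produced \eqref{eq:stimac} and Corollary \ref{cor:zeta} — has already been carried out, so what remains is only the short contradiction argument above. If anything requires a word of care, it is merely noting that the discarded Sobolev-norm terms are genuinely nonnegative (so the inequality is preserved) and that the constant $\mathfrak a_{p}$ appearing in \eqref{eq:stimac} is exactly the one fixed before the lemma, so the threshold matches the hypothesis verbatim.
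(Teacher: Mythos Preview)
Your proof is correct and matches the paper's argument essentially line for line: argue by contradiction, invoke Corollary \ref{cor:zeta} to get $\zeta_{j_0}\geq S^{3/2}$ and hence $\tfrac14\mu_{j_0}+\tfrac{1}{12}\zeta_{j_0}\geq \tfrac13 S^{3/2}$, then plug into \eqref{eq:stimac} (dropping the nonnegative integral terms) to contradict the hypothesis on $c$. The deduction $\mu_j=0$ from Lemma \ref{lem:nu} is likewise exactly what the paper records.
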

\begin{proof}
 Using Corollary \ref{cor:zeta},  assume by contradiction that for some $j_{0}$
 it is $    \zeta_{j_{0}}\geq S^{\frac32}$.
 From \eqref{eq:stimac} it follows
$$
c\geq\frac14\mu_{j_{0}}+\frac{1}{12}\zeta_{j_{0}}-\lambda^{\frac{2}{1-p}}  |K|_{\frac{2}{1-p}}^{\frac{2}{1-p}}\mathfrak a_{p}
 \geq  \frac13 S^{\frac32} -\lambda^{\frac{2}{1-p}}  |K|_{\frac{2}{1-p}}^{\frac{2}{1-p}}\mathfrak a_{p}
$$
which is absurd.
\end{proof}
The above result will be useful to show that any functional satisfies
the Palais-Smale condition below a certain  level which is varying with the functional.

Then by the Lemma it follows that, in the sense of measures, actually we have
\begin{eqnarray}\label{eq:misura}
|u_{n}|^{6} \rightharpoonup |u|^{6} \ \ \ \text{ and }\ \ \ 
|\nabla u_{n}|^{2}  \rightharpoonup \mu \geq |\nabla u|^{2}.
\end{eqnarray}

The next step is to address the convergence in  $L^{6}(\mathbb R^{3})$
of $\{u_{n}\}_{n\in \mathbb N}$.
Let us start with the  local convergence.

\begin{lemma}\label{lem:loc}
%Under the assumption of Lemma \ref{lem:zero},
If  
 $$c<\frac{1}{3}S^{\frac{3}{2}} - \lambda^{\frac{2}{1-p}}  |K|_{\frac{2}{1-p}}^{\frac{2}{1-p}}\mathfrak a_{p},  $$
 for every $R>0$ we have
$$      \lim_{n\to\infty} \int_{B_{R}} |u_{n}|^{6} =  \int_{B_{R}}|u|^{6}.$$
\end{lemma}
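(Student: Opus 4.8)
The plan is to exploit the measure-theoretic information already collected, namely \eqref{eq:misura}, which under the hypothesis $c<\frac13 S^{3/2}-\lambda^{\frac{2}{1-p}}|K|_{\frac{2}{1-p}}^{\frac{2}{1-p}}\mathfrak a_{p}$ tells us (via Lemma \ref{lem:zero}) that there are no concentration points, so that $|u_n|^6\rightharpoonup|u|^6$ as measures on all of $\mathbb R^3$. The point of the lemma is to upgrade this vague/weak-$*$ convergence of measures into genuine convergence of the integrals over each fixed ball $B_R$; the subtlety is that testing weak-$*$ convergence of measures against $\mathbf 1_{B_R}$ is not directly allowed since $\mathbf 1_{B_R}$ is not continuous.

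First I would fix $R>0$ and, for small $\delta>0$, choose cutoff functions $\psi_\delta\in C_c^\infty(\mathbb R^3;[0,1])$ with $\psi_\delta\equiv 1$ on $B_R$ and $\psi_\delta\equiv 0$ outside $B_{R+\delta}$, together with $\chi_\delta\in C_c^\infty(\mathbb R^3;[0,1])$ with $\chi_\delta\equiv 1$ on $B_{R+\delta}$ and supported in $B_{R+2\delta}$. Using $|u_n|^6\rightharpoonup|u|^6$ in the sense of measures (so we can test against the continuous compactly supported functions $\psi_\delta$ and $\chi_\delta$) and the sandwich
\[
\int_{B_R}|u_n|^6 \le \int_{\mathbb R^3}|u_n|^6\psi_\delta \le \int_{B_R}|u_n|^6 + \int_{B_{R+2\delta}\setminus B_R}|u_n|^6\chi_\delta,
\]
I would pass to the limit $n\to\infty$ in the middle term to get $\int_{\mathbb R^3}|u|^6\psi_\delta$, and then let $\delta\to 0$; since $\int_{\mathbb R^3}|u|^6\psi_\delta\to\int_{B_R}|u|^6$ and, similarly, $\limsup_n\int_{B_{R+2\delta}\setminus B_R}|u_n|^6\chi_\delta\le\int_{\overline{B_{R+2\delta}}\setminus B_R}|u|^6\to 0$ as $\delta\to 0$ by dominated convergence (the limit measure $|u|^6\,dx$ has no atoms and gives zero mass to the sphere $\partial B_R$), the two bounds pinch and yield $\lim_n\int_{B_R}|u_n|^6=\int_{B_R}|u|^6$.

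An alternative, perhaps cleaner, route avoids cutoffs on annuli: from \eqref{eq:misura} one has $|u_n|^6\rightharpoonup|u|^6$ weakly-$*$ as finite measures on any fixed compact set, and since the limit measure is absolutely continuous it assigns no mass to $\partial B_R$; by the standard portmanteau theorem for weak-$*$ convergence of measures, convergence on a set whose boundary is null of the limit measure holds, giving directly $\mu_n(B_R)\to\mu(B_R)$, i.e. the claimed identity. I would present the elementary cutoff argument for self-containedness, as the paper's style favours explicit Hölder/cutoff estimates over citing portmanteau.

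The main obstacle, and the only genuinely delicate point, is justifying that the limit measure puts no mass on the sphere $\partial B_R$: this is where absolute continuity of $|u|^6\,dx$ (trivially, Lebesgue measure of a sphere is zero) is used, together with the already-established fact from Lemma \ref{lem:zero} that the concentration part $\sum_j\zeta_j\delta_{x_j}$ has vanished. Without the no-concentration conclusion one could not rule out an atom sitting exactly on $\partial B_R$, which is precisely why the hypothesis on $c$ is needed. Everything else — the boundedness of $\{u_n\}$ in $L^6$ from Lemma \ref{androide1} and the Sobolev embedding, the sandwich inequality, and the two limit passages — is routine.
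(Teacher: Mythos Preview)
Your proposal is correct and follows essentially the same route as the paper: use the weak-$*$ convergence $|u_n|^6\rightharpoonup|u|^6$ from \eqref{eq:misura} (available thanks to Lemma \ref{lem:zero}) together with smooth cutoffs approximating $\mathbf 1_{B_R}$, then let the cutoff parameter go to zero using dominated convergence and the fact that $|\partial B_R|=0$. The paper's write-up is marginally cleaner in that it treats the $\liminf$ and $\limsup$ separately with an inside cutoff ($\varphi_\varepsilon\equiv1$ on $B_{R-\varepsilon}$, supported in $B_R$) and an outside cutoff ($\varphi_\varepsilon\equiv1$ on $B_R$, supported in $B_{R+\varepsilon}$), avoiding your auxiliary annulus term $\chi_\delta$; but this is a purely cosmetic difference.
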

\begin{proof}
This is a result in Measure Theory, however for the reader convenience we give the details.
  Fix $R>0$ and for every $\varepsilon>0$  consider $\varphi_{\varepsilon}\in C^{\infty}_{c}(\mathbb R^{3};[0,1])$
  such that 
  $$\varphi_{\varepsilon}\equiv 1 \ \text{ on } B_{R-\varepsilon}, \quad \varphi_{\varepsilon}\equiv0 \ \text{ on }
  B_{R}^{c}.$$
  Then by \eqref{eq:misura}
  \begin{equation*}
  \liminf_{n\to\infty} \int_{B_{R}} |u_{n}|^{6} \geq   \liminf_{n\to\infty} \int_{\mathbb R^{3}} 
  \varphi_{\varepsilon} |u_{n}|^{6}  =   \int_{\mathbb R^{3}}   \varphi_{\varepsilon}|u|^{6}
  \end{equation*}
  and by the Lebesgue's Theorem it holds, as $\varepsilon\to0$,
  \begin{equation}\label{eq:inf}
    \liminf_{n\to\infty} \int_{B_{R}} |u_{n}|^{6} \geq  \int_{B_{R}}|u|^{6}.
  \end{equation}
 
 Similarly, by taking $\varphi_{\varepsilon}\in C^{\infty}_{c}(\mathbb R^{3};[0,1])$
  such that 
  $$\varphi_{\varepsilon}\equiv 1 \ \text{ on } B_{R}, \quad \varphi_{\varepsilon}\equiv0 \ \text{ on }
  B_{R+\varepsilon}^{c},$$ 
  we infer
  $$\limsup_{n\to\infty} \int_{B_{R}} |u_{n}|^{6}\leq \limsup_{n\to\infty} \int_{\mathbb R^{3}} \varphi_{\varepsilon}|u_{n}|^{6}= \int_{\mathbb R^{3}} \varphi_{\varepsilon}|u|^{6}$$
  and passing to the limit as $\varepsilon\to 0 $ we get
\begin{equation}\label{eq:sup}
\limsup_{n\to\infty} \int_{B_{R}} |u_{n}|^{6}\leq \int_{B_{R}}|u|^{6}.
\end{equation}
The conclusion then follows by \eqref{eq:inf} and \eqref{eq:sup}.
\end{proof}

Finally  we can prove the fundamental result.

\begin{proposition}\label{prop:6}
If 
  $$c<\frac{1}{3}S^{\frac{3}{2}} - \lambda^{\frac{2}{1-p}}  |K|_{\frac{2}{1-p}}^{\frac{2}{1-p}}\mathfrak a_{p},  $$
 it is 
$$\lim_{n\to\infty} |u_{n}|_{6}^{6} = |u|_{6}^{6}$$
and then $u_{n}\to u$ in $L^{6}(\mathbb R^{3})$.
\end{proposition}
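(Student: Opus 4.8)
The plan is to rule out the only remaining possibility, namely loss of $L^{6}$-mass at infinity, by means of the ``at infinity'' version of Lions' concentration--compactness lemma, combined with the local convergence already obtained in Lemma~\ref{lem:loc}. Passing if necessary to a subsequence along which the relevant tail quantities converge, I would set
$$
\mu_{\infty}:=\lim_{R\to\infty}\limsup_{n\to\infty}\int_{B_{R}^{c}}|\nabla u_{n}|^{2},
\qquad
\zeta_{\infty}:=\lim_{R\to\infty}\limsup_{n\to\infty}\int_{B_{R}^{c}}|u_{n}|^{6},
$$
which are finite by Lemma~\ref{androide1}. One has the standard relation $S\zeta_{\infty}^{1/3}\le\mu_{\infty}$ (from the Sobolev inequality applied to $\psi u_{n}$ with cut-offs $\psi$ supported near infinity), and, since $|u_{n}|^{6}\rightharpoonup|u|^{6}$ by \eqref{eq:misura} and $\int_{B_{R}}|u_{n}|^{6}\to\int_{B_{R}}|u|^{6}$ by Lemma~\ref{lem:loc}, also $\limsup_{n}|u_{n}|_{6}^{6}=|u|_{6}^{6}+\zeta_{\infty}$. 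Hence it suffices to show $\zeta_{\infty}=0$.

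The core step is to prove $\mu_{\infty}\le\zeta_{\infty}$, arguing as in Lemma~\ref{lem:nu} but localizing at infinity. I would fix $R>0$, take $\psi_{R}\in C^{\infty}(\mathbb R^{3};[0,1])$ with $\psi_{R}\equiv0$ on $B_{R}$, $\psi_{R}\equiv1$ on $B_{2R}^{c}$ and $|\nabla\psi_{R}|\le2/R$, and use that $\{\psi_{R}u_{n}\}_{n}$ is bounded in $H^{1}(\mathbb R^{3})$ together with $\mathcal I'_{a,\lambda}(u_{n})\to0$ to get $\mathcal I'_{a,\lambda}(u_{n})[\psi_{R}u_{n}]\to0$ as $n\to\infty$. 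Expanding this bracket as in \eqref{estimate}: the term $\int u_{n}^{2}\psi_{R}$ is nonnegative and is simply discarded; the cross term $\int u_{n}\nabla u_{n}\nabla\psi_{R}$ is $O(1/R)$ uniformly in $n$ by H\"older and $|\nabla\psi_{R}|\le2/R$; the sublinear term satisfies $\lambda\big|\int K|u_{n}|^{p+1}\psi_{R}\big|\le\lambda|K|_{L^{2/(1-p)}(B_{R}^{c})}|u_{n}|_{2}^{p+1}$, which is uniformly $o_{R}(1)$ since $K\in L^{2/(1-p)}$; and the nonlocal term $\int Q(x)\phi^{a}_{u_{n}}u_{n}^{2}\psi_{R}$ is controlled, via H\"older with exponents $(s,6,\tfrac{12s}{5s-6},\tfrac{12s}{5s-6})$ and \eqref{v:lemmaphi} of Lemma~\ref{lemmaphi}, by $c\,|Q|_{L^{s}(B_{R}^{c})}$, which is uniformly $o_{R}(1)$ when $s<\infty$ (and in the borderline case $s=\infty$ one splits $Q=Q\chi_{\{|Q|<t\}}+Q\chi_{\{|Q|\ge t\}}$ and uses that $\{|Q|\ge t\}$ has finite measure, exactly as in the proof of Lemma~\ref{lemmaphi}). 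Collecting these estimates yields $\limsup_{n}\int|\nabla u_{n}|^{2}\psi_{R}\le\limsup_{n}\int|u_{n}|^{6}\psi_{R}+o_{R}(1)$, and letting $R\to\infty$ gives $\mu_{\infty}\le\zeta_{\infty}$. Combined with $S\zeta_{\infty}^{1/3}\le\mu_{\infty}$, this forces $\zeta_{\infty}=0$ or $\zeta_{\infty}\ge S^{3/2}$.

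The last step excludes $\zeta_{\infty}\ge S^{3/2}$ through the energy level, reproducing the computation leading to \eqref{eq:stimac}: from $c=\lim_{n}\big(\mathcal I_{a,\lambda}(u_{n})-\tfrac14\mathcal I'_{a,\lambda}(u_{n})[u_{n}]\big)$, the bound $\theta_{\lambda}(u_{n})\ge-\lambda^{\frac{2}{1-p}}|K|_{\frac{2}{1-p}}^{\frac{2}{1-p}}\mathfrak a_{p}$, and localization with $\psi_{R}$, one obtains $c\ge\tfrac14\mu_{\infty}+\tfrac1{12}\zeta_{\infty}-\lambda^{\frac{2}{1-p}}|K|_{\frac{2}{1-p}}^{\frac{2}{1-p}}\mathfrak a_{p}$; if $\zeta_{\infty}\ge S^{3/2}$ then, as in Corollary~\ref{cor:zeta} (using $\mu_{\infty}\ge S\zeta_{\infty}^{1/3}$), $\tfrac14\mu_{\infty}+\tfrac1{12}\zeta_{\infty}\ge\tfrac13 S^{3/2}$, hence $c\ge\tfrac13 S^{3/2}-\lambda^{\frac{2}{1-p}}|K|_{\frac{2}{1-p}}^{\frac{2}{1-p}}\mathfrak a_{p}$, contradicting the hypothesis. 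Therefore $\zeta_{\infty}=0$, so $\limsup_{n}|u_{n}|_{6}^{6}=|u|_{6}^{6}$, while $|u|_{6}^{6}\le\liminf_{n}|u_{n}|_{6}^{6}$ by Fatou's lemma (a.e.\ convergence); hence $|u_{n}|_{6}^{6}\to|u|_{6}^{6}$, first along the subsequence and then, by uniqueness of the limit, for the whole sequence. Finally, $u_{n}\rightharpoonup u$ in $L^{6}(\mathbb R^{3})$ (since $u_{n}\rightharpoonup u$ in $H^{1}(\mathbb R^3)$) together with the convergence of the norms gives $u_{n}\to u$ in $L^{6}(\mathbb R^{3})$ by uniform convexity (Radon--Riesz property). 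I expect the genuine work to be concentrated in the second step, specifically the uniform-in-$n$ control of the nonlocal term $\int Q\phi^{a}_{u_{n}}u_{n}^{2}\psi_{R}$, which is most delicate in the limiting case $s=\infty$ of assumption \eqref{Q}.
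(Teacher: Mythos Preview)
Your proposal is correct and follows essentially the same route as the paper's proof: define the concentration quantities $\mu_{\infty},\zeta_{\infty}$ at infinity, test $\mathcal I'_{a,\lambda}(u_{n})$ against a cut-off localized near infinity to obtain $\mu_{\infty}\le\zeta_{\infty}$, combine with $S\zeta_{\infty}^{1/3}\le\mu_{\infty}$ to get the dichotomy, and exclude $\zeta_{\infty}\ge S^{3/2}$ via the energy bound exactly as in Lemma~\ref{lem:zero}. The only noteworthy differences are cosmetic (your cut-off transitions on $[R,2R]$ rather than $[R,R+1]$, and you bound the cross term directly by $O(1/R)$), except that you explicitly treat the case $s=\infty$ in \eqref{Q} for the nonlocal term---a point the paper's proof leaves implicit.
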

\begin{proof}

By the Fatou's Lemma  it is
\begin{eqnarray*}
   \int_{\mathbb{R}^3} |u|^{6} 
   %= \int_{\mathbb{R}^3} \liminf _{n \rightarrow \infty} u_n^{6}
    \leq \liminf _{n \rightarrow \infty} \int_{\mathbb{R}^3} |u_n|^{6}
    % \leq \limsup _{n \rightarrow \infty}\int_{\mathbb{R}^3} u_n^{6} =\int_{\mathbb{R}^3}u^{6}.
\end{eqnarray*}  
and so it is sufficient to prove that 
\begin{equation}\label{eq:sup6}
     \limsup _{n \rightarrow \infty} \int_{\mathbb{R}^3} |u_n|^{6}\leq   \int_{\mathbb{R}^3} |u|^{6} .\end{equation}
%  Fix $R>0$ and for every $\varepsilon>0$  consider $\varphi_{\varepsilon}\in C^{\infty}_{c}(\mathbb R^{3};[0,1])$
%  such that 
%  $$\varphi_{\varepsilon}\equiv 1 \ \text{ on } B_{R-\varepsilon}, \quad \varphi_{s}\equiv0 \ \text{ on }
%  B_{R-\varepsilon}^{c}.$$
%  Then,
%  \begin{equation*}
%  \liminf_{n\to\infty} \int_{B_{R}} |u_{n}|^{6} \geq   \liminf_{n\to\infty} \int_{\mathbb R^{3}} 
%  \varphi_{s} |u_{n}|^{6}  =   \int_{\mathbb R^{3}}   \varphi_{s}|u|^{6}
%  \end{equation*}
%  and by the Lebesgue's Theorem it holds, as $\varepsilon\to0$,
%  \begin{equation}\label{eq:inf}
%    \liminf_{n\to\infty} \int_{B_{R}} |u_{n}|^{6} \geq  \int_{B_{R}}|u|^{6}.
%  \end{equation}
% 
% Similarly, by taking $\varphi_{\varepsilon}\in C^{\infty}_{c}(\mathbb R^{3};[0,1])$
%  such that 
%  $$\varphi_{\varepsilon}\equiv 1 \ \text{ on } B_{R}, \quad \varphi_{s}\equiv0 \ \text{ on }
%  B_{R+\varepsilon}^{c},$$ 
%  we infer
%  $$\limsup_{n\to\infty} \int_{B_{R}} |u_{n}|^{6}\leq \limsup_{n\to\infty} \int_{\mathbb R^{3}} \varphi_{\varepsilon}|u_{n}|^{6}= \int_{\mathbb R^{3}} \varphi_{\varepsilon}|u|^{6}$$
%  and to the limit as $\varepsilon\to 0 $ we get
%    \begin{equation}\label{eq:sup}
%    \limsup_{n\to\infty} \int_{B_{R}} |u_{n}|^{6} \leq  \int_{B_{R}}|u|^{6}.
%  \end{equation}
%  By \eqref{eq:inf} and \eqref{eq:sup} we deduce that
%
  Note  first that, in virtue of Lemma \ref{lem:loc} 
  \begin{eqnarray}\label{eq:bho}
  \limsup_{n\to\infty}\int_{\mathbb R^{3}} |u_{n}|^{6} &= &
    \limsup_{n\to\infty} \left(\int_{B_{R}^{c}} |u_{n}|^{6}  +
\int_{B_{R}} |u_{n}|^{6} \right) \nonumber \\
&      =&  \limsup_{n\to\infty} \int_{B_{R}^{c}} |u_{n}|^{6}  + \int_{B_{R}}|u|^{6}.
  \end{eqnarray}
If we set
  $$\zeta_{\infty} := \lim_{R\to+\infty} \limsup_{n\to\infty}\int_{B_{R}^{c}} |u_{n}|^{6} $$
  by \eqref{eq:bho} we infer, as $R\to+\infty$,
  \begin{equation*}\label{eq:}
    \limsup_{n\to\infty}\int_{\mathbb R^{3}} |u_{n}|^{6} =  \zeta_{\infty} + \int_{\mathbb R^{3}}|u|^{6}.
  \end{equation*}
  So the proof is finished if we show that $\zeta_{\infty} = 0$, since in this case we would have \eqref{eq:sup6} (indeed with the equality).

Let us define also 
  \begin{equation*}\label{eq:}
  \mu_{\infty}:= \lim_{R\to+\infty}\limsup_{n\to\infty} \int_{B_{R}^{c}}|\nabla u_{n}|^{2}
  \end{equation*}
  and  as in \cite[Lemma 1.40]{MWill} we see that  
\begin{equation}\label{eq:zetamuinfty}
S\zeta_{\infty}^{\frac{1}{3}} \leq \mu_{\infty}.
\end{equation}    
  
  {\bf Claim: } $\zeta_{\infty} \geq\mu_{\infty}$.
  
\medskip

 % inizio di questa prova...
%  If    $\varphi_{R}\in C^{\infty}_{c}(\mathbb R^{3};[0,1])$
%  such that 
%  $$\varphi_{R}\equiv 1 \ \text{ on } B_{R+1}^{c}, \quad \varphi_{R}\equiv0 \ \text{ on }
%  B_{R},$$ 
%  we have
 % $$S^{1/2}| \varphi_{R} u_{n}|_{6} \leq |\nabla (\varphi_{R} u_{n})|_{2} \leq $$
%
  Consider a cut-off function $\varphi_{R}\in C^{\infty}(\mathbb R^{3};[0,1])$ with
  \begin{equation}\label{eq:phiR}
  \varphi_{R}\equiv 0\ \text{ on } B_{R}, \quad \varphi_{R}\equiv1 \ \text{ on }
  B_{R+1}^{c}, \ \ |\nabla \varphi_{R}| \leq \frac2R
  \end{equation}
  Since $\{u_{n}\varphi_{R}\}_{n\in \mathbb N}$ is bounded in $H^{1}(\mathbb R^{3})$ we have
  $\lim_{n\to\infty}\mathcal I'_{a,\lambda}(u_{n})[u_{n}\varphi_{R}] = 0$, which means
  \begin{eqnarray}\label{estimate2}
     %\textcolor{red}{\varepsilon_n}&=&
      \mathcal{I}'_{a,\lambda}(u_n)[\varphi_{R} u_n ] 
    &=& \int_{\mathbb{R}^3} (\nabla u_n \nabla(\varphi_{R} u_n)+  u_n^{2} \varphi_{R}) +  \int_{\mathbb{R}^3} Q(x) \phi^{a}_{u_n} u_n^{2} \varphi_{R} \nonumber \\ 
    &&-\int_{\mathbb{R}^3} \vert u_n\vert^6 \varphi_{R} 
- \lambda \int_{\mathbb{R}^3} K(x)\vert u_n \vert^{p+1} \varphi_{R} \nonumber \\
 &  \geq &  \int_{\mathbb{R}^3} u_n \nabla u_n \nabla \varphi_{R}
 +  \int_{\mathbb{R}^3} \vert \nabla u_n\vert^{2}\varphi_{R} +   \int_{\mathbb{R}^3} Q(x) \phi^{a}_{u_n} u_n^{2} \varphi_{R} \nonumber
 %+  \int_{\mathbb{R}^3} \vert u_n\vert^{2} \varphi_{R}  +  \int_{\mathbb{R}^3} Q(x) \phi^{a}_{u_n} u_n^{2} \varphi_{R}\nonumber 
 \\
&& -\int_{\mathbb{R}^3} \vert u_n\vert^6 \varphi_{R} - \lambda \int_{\mathbb{R}^3} K(x)\vert u_n \vert^{p+1} \varphi_{R}.
\end{eqnarray}
Let us evaluate each term above by taking first the limsup as $n\to+\infty$ and then as $R\to+\infty$.

  By repeating the computations which led to \eqref{eq:gradgrad} we have 
$$ 
0\leq \lim_{n\rightarrow\infty}
     \int_{\mathbb{R}^3} \vert u_n \nabla u_n \nabla \varphi_{R}\vert 
    \leq
     %\left(\int_{B(x_{j_{0}},2\varepsilon)}  \vert u\nabla \varphi_{\varepsilon}\vert^{2}\right)^{1/2}\\
     %&\leq& C
  %\left(\int_{B(x_{j_{0}},2\varepsilon)}  \vert u\vert^{6}\right)^{1/6}\left(\int_{B(x_{j_{0}},2\varepsilon)}  \vert\nabla \varphi_{\varepsilon} %\vert^{3}\right)^{1/3}\\
     c_{1}\left(\int_{B_{R+1}\setminus B_{R}}  \vert u\vert^{6}\right)^{\frac{1}{6}},
$$
 from which
 \begin{equation}\label{eq:gradgrad2}
      \lim_{R \rightarrow +\infty} \lim_{n\rightarrow\infty}
     \int_{\mathbb{R}^3} \vert u_n \nabla u_n \nabla \varphi_{R}\vert =0.
 \end{equation}

 For the second and fourth terms note that
 \begin{eqnarray*}
&& \int_{B^{c}_{R+1}} |\nabla u_{n}|^{2} \leq \int_{\mathbb R^{3}} |\nabla u_{n}|^{2}  \varphi_{R}\leq \int_{B^{c}_{R}} |\nabla u_{n}|^{2} \\
&& \int_{B^{c}_{R+1}} | u_{n}|^{6} \leq \int_{\mathbb R^{3}} | u_{n}|^{6}  \varphi_{R}\leq \int_{B^{c}_{R}} |u_{n}|^{6}
 \end{eqnarray*}
 so
 \begin{eqnarray}
&& \lim_{R\to+\infty}\limsup_{n\to\infty}\int_{\mathbb R^{3}} |\nabla u_{n}|^{2}  \varphi_{R} = \mu_{\infty}\label{eq:muinfty}, \\
&& \lim_{R\to+\infty}\limsup_{n\to\infty}\int_{\mathbb R^{3}} | u_{n}|^{6}  \varphi_{R} = \zeta_{\infty}\label{eq:zetainfty}.
 \end{eqnarray}
 
 The third term is estimates as 
 \begin{eqnarray*}
 \limsup_{n\to+\infty} \Big| \int_{\mathbb R^{3}} Q(x) \phi_{u_{n}}^{a} u_{n}^{2}\varphi_{R} \Big| &\leq &
  \limsup_{n\to+\infty} \int_{B_{R}^{c}} |Q(x) \phi_{u_{n}}^{a}| u_{n}^{2}\\
  &\leq &  \limsup_{n\to+\infty} |Q|_{L^{s}(B_{R}^{c})} %\left(\int_{B_{R}^{c}} |Q|^{s} \right)^{\frac{1}{s}}
   |\phi_{u_{n}}^{a}|_{L^{6}(B_{R}^{c})} |u_{n}^{2}|_{L^{\frac{6s}{5s-6}}(B_{R}^{c})}% \left(\int_{B_{R}^{c}} |u_{n}|^{\frac{12s}{5s-6}} \right)^{\frac{1}{s'}}
  \\
  &\leq &  \limsup_{n\to+\infty}  c_{1}  |Q|_{L^{s}(B_{R}^{c})}%\left(\int_{B_{R}^{c}} |Q|^{s} \right)^{\frac{1}{s}}
  \|\phi_{u_{n}}^{a}\|_{\mathcal D} \| u_{n}\|^{2}\\
  &\leq &c_{2} |Q|_{L^{s}(B_{R}^{c})} %\left(\int_{B_{R}^{c}} |Q|^{s} \right)^{\frac{1}{s}}
 \end{eqnarray*}
having used that  $\frac{12s}{5s-6}\in[2,6]$, \eqref{v:lemmaphi} of Lemma \ref{lemmaphi}
and that $\{u_{n}\}_{n\in \mathbb N}$ is bounded.  It follows that 
 \begin{equation}\label{eq:Q}
 \lim_{R\rightarrow +\infty}   \limsup _{n \rightarrow \infty} \int_{\mathbb{R}^3} Q(x) \phi_{u_{n}}^{a} u_{n}^{2}\varphi_{R} =0.
 \end{equation}
 
 Finally for the fifth term, using the boundedness of $\{u_{n}\}_{n\in \mathbb N}$ in $L^{2}(\mathbb R^{3})$,
 \begin{eqnarray*}
   \limsup _{n \rightarrow \infty}\Big| \int_{\mathbb{R}^3} K(x)\vert u_n \vert^{p+1}\varphi_{R} \Big|&\leq& \limsup _{n \rightarrow \infty}%\left(
   \int_{B^{c}_{R}} |K(x)|\vert u_n \vert^{p+1}\\
   %+ \int_{B(0,2R)^{c}} K(x)\vert u_n \vert^{p+1}\varphi_{R}\right)\\
   &\leq & \limsup _{n \rightarrow \infty}\left(\int_{B^{c}_{R}}  \vert K(x)\vert^{\frac{2}{1-p}}\right)^{\frac{1-p}{2}} \left(\int_{B^{c}_{R}} \vert u_n \vert^2 \right)^{\frac{p+1}{2}}\\
%&& + \limsup_{n \rightarrow \infty} \left(\int_{B(0,2R)^{c}} \vert K x) \vert^{2/(1-p)} \right)^{(1-p)/2} \left(\int_{B(0,2R)^{c}} \vert u_n \vert^{2}\right)^{(p+1)/2},
&\leq& c_{3} \left(\int_{B^{c}_{R}}  \vert K(x)\vert^{\frac{2}{1-p}}\right)^{\frac{1-p}{2}}
\end{eqnarray*}
and so
\begin{equation}\label{eq:omega}
\lim_{R\rightarrow +\infty}   \limsup _{n \rightarrow \infty} \int_{\mathbb{R}^3} K(x)\vert u_n \vert^{p+1}\varphi_{R} =0.
% c_1 \lim_{R\rightarrow \infty}  \limsup _{n \rightarrow \infty}\left(\int_{B(0,2R)/B(0,R)}  \vert %K(x)\vert^{2/(1-p)}\right)^{(1-p)/2}\\
%&& + c_2  \lim_{R\rightarrow \infty}  \limsup_{n \rightarrow \infty} \left(\int_{B(0,2R)^{c}} \vert K(x) \vert^{2/(1-p)} \right)^{(1-p)/2}=0 \\
%\lim_{R\rightarrow \infty}   \limsup _{n \rightarrow \infty} \int_{\mathbb{R}^3} K(x)\vert u_n \vert^{p+1}\phi_{R} &=& 0
\end{equation}

Summing up,  taking the limsup in $n$ in \eqref{estimate2}, and successively the limit as $R\to+\infty$, by means of \eqref{eq:gradgrad2}-\eqref{eq:omega} we deduce that $\zeta_{\infty}\geq\mu_{\infty}$ which proves the claim

\medskip

In virtue of \eqref{eq:zetamuinfty} it is 
$$\zeta_{\infty}=0 \quad\text{or}\quad\zeta_{\infty} \geq S^{\frac32}$$
and arguing as in Corollary \ref{cor:zeta} we infer
that if $\zeta_{\infty}\geq S^{\frac32}$ then

\begin{equation}\label{eq:combinfty}
\frac{1}{4}\mu_{\infty} +\frac{1}{12}\zeta_{\infty} \geq \frac{1}{3}S^{\frac32}.
\end{equation}

%We show that $\zeta_{\infty}=0$.
%If it were  $\zeta_{\infty} \geq S^{3/2} $,

But then we can argue  as in the proof of Lemma \ref{lem:zero} and get a contradiction.
Hence $\zeta_{\infty} =0$ and the proof is concluded.
%
%\textcolor{red}{In fact,
%we have 
%$$c\geq \lim_{n\to\infty} \left(\frac{1}{4}\int_{\mathbb{R}^3}\vert \nabla u_n\vert^{2} 
% + \frac{1}{12}\int_{\mathbb{R}^3} \vert u_n\vert ^{6}  +  \theta_{\lambda}(u_{n}) \right),
%$$
%with
%$
%\theta_{\lambda}
%%(u)=\frac14 |u|_{2}^{2} - \lambda\frac{3-p}{4(p+1)}|K|_{2/(1-p)}|u|_{2}^{p+1}.
%$
%given in Lemma \ref{lem:Lambda}.
%We distinguish two cases as before and proceed by choosing $\varphi_{R}$ as in \eqref{eq:phiR}
%and use \eqref{eq:muinfty}, \eqref{eq:zetainfty} and \eqref{eq:combinfty} obtaining the contradiction $c\geq\frac13 S^{3/2}$.
%}
\end{proof}

We have the following convergences that will be useful to prove the compactness condition.
Note that the strong convergence in $L^{6}(\mathbb R^{3})$ of $\{u_{n}\}_{n\in \mathbb N}$ is not used in the proof.
Indeed the next result is true for any  sequence weakly convergent in $H^{1}(\mathbb R^{3})$.

\begin{lemma}\label{lem:final}
We have
\begin{eqnarray*}
%& \displaystyle \int_{\mathbb R^{3}}Q(x) \phi^{a}_{u_{n}} u_{n}^{2}\longrightarrow \int_{\mathbb R^{3}}Q(x) \phi^{a}_{u} u^{2},&\\
%&\displaystyle \int_{\mathbb R^{3}}Q(x) \phi^{a}_{u_{n}} u_{n} u \longrightarrow  \int_{\mathbb R^{3}}Q(x) \phi^{a}_{u} u^{2 },& \\
&\displaystyle \int_{\mathbb R^{3}} K(x)|u_{n}|^{p+1} \longrightarrow \int_{\mathbb R^{3}}K(x)|u|^{p+1},&\\
&\displaystyle \int_{\mathbb R^{3}} K(x)|u_{n}|^{p-1}u_{n}u \longrightarrow\int_{\mathbb R^{3}} K(x)|u|^{p+1}.&
\end{eqnarray*}
\end{lemma}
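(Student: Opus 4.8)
\emph{Proof sketch.} Since the assertion uses only the weak convergence $u_n\rightharpoonup u$ in $H^1(\mathbb R^3)$, I would first record the standard consequences: $\{u_n\}_{n\in\mathbb N}$ is bounded in $H^1(\mathbb R^3)$, hence in $L^2$ and in $L^6$; by Rellich--Kondrachov $u_n\to u$ in $L^q(B_R)$ for every $R>0$ and every $q\in[1,6)$; and $u_n\to u$ a.e.\ in $\mathbb R^3$. Setting $r:=\tfrac{2}{1-p}$ (the exponent in \eqref{W}), the plan is the usual cut-off argument: split $\mathbb R^3=B_R\cup B_R^c$, pass to the limit on $B_R$ by local compactness, and show that the integrals over $B_R^c$ are small \emph{uniformly in $n$}, exploiting the global integrability $K\in L^{r}(\mathbb R^3)$.

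For the tails, H\"older's inequality — with the conjugate pair $\tfrac1r=\tfrac{1-p}{2},\ \tfrac{p+1}{2}$ in the first integral and the triple $\tfrac{1-p}{2},\tfrac p2,\tfrac12$ in the second — together with the boundedness of $\{u_n\}$ in $L^2(\mathbb R^3)$ gives
\[
\int_{B_R^c}|K|\,|u_n|^{p+1}\le |K|_{L^{r}(B_R^c)}\,|u_n|_2^{p+1}\le c_1\,|K|_{L^{r}(B_R^c)},\qquad
\int_{B_R^c}|K|\,|u_n|^{p}\,|u|\le |K|_{L^{r}(B_R^c)}\,|u_n|_2^{p}\,|u|_2\le c_1\,|K|_{L^{r}(B_R^c)},
\]
and the same bound holds for $\int_{B_R^c}|K|\,|u|^{p+1}$. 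Since $K\in L^{r}(\mathbb R^3)$, all these right-hand sides tend to $0$ as $R\to+\infty$, uniformly in $n$.

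On a fixed ball $B_R$ we have $u_n\to u$ in $L^2(B_R)$, so, up to a subsequence, $u_n\to u$ a.e.\ on $B_R$ with $|u_n|\le h$ for some $h\in L^2(B_R)$; dominated convergence then yields $|u_n|^{p+1}\to|u|^{p+1}$ in $L^{2/(p+1)}(B_R)$ and $|u_n|^{p-1}u_n\to|u|^{p-1}u$ in $L^{2/p}(B_R)$, and hence the full sequence converges by the usual subsequence argument. Multiplying by $K\in L^{r}(B_R)$ and using H\"older with the same exponents as above gives
\[
\int_{B_R}K\,|u_n|^{p+1}\ \longrightarrow\ \int_{B_R}K\,|u|^{p+1},\qquad
\int_{B_R}K\,|u_n|^{p-1}u_n\,u\ \longrightarrow\ \int_{B_R}K\,|u|^{p-1}u\,u=\int_{B_R}K\,|u|^{p+1}.
\]
Combining the two steps by an $\varepsilon/3$ argument — fix $R$ so that all the tails are $<\varepsilon/3$ uniformly in $n$, then let $n\to\infty$ — proves both convergences in the statement. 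The only point requiring care is the uniform smallness of the tails, which is precisely where hypothesis \eqref{W} is used; note also that the strong $L^6$ convergence of $\{u_n\}$ plays no role here, consistently with the remark preceding the statement.
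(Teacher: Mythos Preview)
Your proof is correct and follows the same strategy as the paper: control the tails on $B_R^c$ via H\"older and the global integrability $K\in L^{2/(1-p)}(\mathbb R^3)$, then use local strong convergence on $B_R$. The only difference is that on the ball the paper inserts an additional splitting by the level sets $A_M=\{x\in B_R:|K(x)|>M\}$ so as to work with $L^1$-convergence of $|u_n|^{p+1}$, whereas you pass directly through the continuity of the Nemytskii map $L^2(B_R)\to L^{2/(p+1)}(B_R)$ and pair with $K\in L^{2/(1-p)}(B_R)$, which is slightly cleaner.
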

\begin{proof}

Fixed $\varepsilon>0$ there is $R_{\varepsilon}>0$ such that
$|K|_{L^{\frac{2}{1-p}}(B_{R_{\varepsilon}}^{c})} \leq \varepsilon$. Then we have
\begin{eqnarray}\label{eq:K}
\int_{\mathbb R^{3}} |K(x)| \left| |u_{n}|^{p+1} - |u|^{p+1}\right| &\leq&
% |K|_{L^{\frac{2}{1-p}}(B_{R_{\varepsilon}}) }
%\left| |u_{n}|^{p+1} - |u|^{p+1}\right|_{L^{\frac{2}{1+p}}(B_{R_{\varepsilon}}) }
\int_{B_{R_{\varepsilon}}} |K(x)| \left| |u_{n}|^{p+1} - |u|^{p+1}\right| \nonumber\\
&+&  \varepsilon  \left| |u_{n}|^{p+1} - |u|^{p+1}\right|_{L^{\frac{2}{1+p}}(B_{R_{\varepsilon}}^{c}) } \nonumber \\
&\leq&\int_{B_{R_{\varepsilon}}} |K(x)| \left| |u_{n}|^{p+1} - |u|^{p+1}\right|  + \varepsilon \left( |u_{n}|_{2}^{p+1} +|u|^{p+1}_{2} \right) \nonumber\\
&\leq& \int_{B_{R_{\varepsilon}}} |K(x)| \left| |u_{n}|^{p+1} - |u|^{p+1}\right|  + \varepsilon c_{1}.
\end{eqnarray}
Setting $A_{M}:=\{x\in B_{R_{\varepsilon}} : |K(x)|>M \}$ we know that 
$\lim_{M\to+\infty}\textrm{meas} \,A_{M} = 0$. Therefore for $M$ large it is 
$|K|_{L^{\frac{2}{1-p}}(A_{M})} \leq \varepsilon$
and so we have, 
\begin{eqnarray*}
\int_{B_{R_{\varepsilon}}} |K(x)| \left| |u_{n}|^{p+1} - |u|^{p+1}\right| &\leq& 
\varepsilon \left| {|u_{n}|^{p+1} - |u|^{p+1}}\right|_{L^{\frac{2}{p+1}}(A_{M})}+M\int_{B_{R_{\varepsilon}}\setminus A_{M}}\left| |u_{n}|^{p+1} - |u|^{p+1}\right| \\ 
&\leq&\varepsilon (|u_{n}|^{p+1}_{2} + |u|^{p+1}_{2}) %o_{n}(1) 
+ M \left| |u_{n}|^{p+1} - |u|^{p+1}\right|_{L^{1}(B_{R_{\varepsilon}})}\\
&\leq & \varepsilon c_{1}  +  o_{n}(1)
\end{eqnarray*}
due to the strong convergence in the ball. 
Inserting the above estimate in \eqref{eq:K} we get the   the first convergence due to the arbitrariness of $\varepsilon$.

\smallskip

The other  convergence is proved similarly: fixed $\varepsilon>0$ choose $R_{\varepsilon}>0$ 
so that $|K|_{L^{\frac{2}{1-p}}(B_{R_{\varepsilon}}^{c})} \leq \varepsilon$ and then
\begin{eqnarray*}
\int_{\mathbb R^{3}} |K(x)| \left| |u_{n}|^{p-1}u_{n} - |u|^{p}\right| |u| &\leq&
\int_{B_{R_{\varepsilon}}}  |K(x)| \left| |u_{n}|^{p-1}u_{n} - |u|^{p}\right| |u| \nonumber\\
&+& \varepsilon ||u_{n}|^{p-1} u _{n} - |u|^{p} |_{L^{\frac2p}(B_{R_{\varepsilon}}^{c})}  |u|_{L^{2}(B_{R_{\varepsilon}}^{c})}.
\end{eqnarray*}
The proof is concluded as before.
\end{proof}

Finally we are able to prove the Palais-Smale condition.

\begin{proposition}\label{cor:PS}
If 
$$c<\frac13 S^{\frac32}-\lambda^{\frac{2}{1-p}}|K|_{\frac{2}{1-p}}^{\frac{2}{1-p}}\mathfrak a_{p} $$
then the $(PS)_{c}$ sequence $\{u_{n}\}_{n\in \mathbb N}$
for the functional  $\mathcal I_{a,\lambda}$ is strongly convergent (up to subsequences) to $u$ in $H^{1}(\mathbb R^{3})$.
\end{proposition}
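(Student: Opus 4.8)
The plan is to exploit all the machinery assembled in this section. We already know from Lemma~\ref{androide1} that the $(PS)_{c}$ sequence $\{u_{n}\}_{n\in\mathbb N}$ is bounded in $H^{1}(\mathbb R^{3})$, so (passing to a subsequence) $u_{n}\rightharpoonup u$ in $H^{1}(\mathbb R^{3})$, $u_{n}\to u$ in $L^{q}_{loc}(\mathbb R^{3})$ for $q\in[1,6)$, and $u_{n}(x)\to u(x)$ a.e. The limit $u$ is automatically a critical point of $\mathcal I_{a,\lambda}$: indeed, testing $\mathcal I'_{a,\lambda}(u_{n})\to0$ against an arbitrary $\varphi\in C^{\infty}_{c}(\mathbb R^{3})$ and passing to the limit, using the local compactness together with \emph{vii-c)} and \emph{vii-d)} of Lemma~\ref{lemmaphi} for the nonlocal term and the first convergence of Lemma~\ref{lem:final} (or rather its variant for the derivative) for the sublinear term, gives $\mathcal I'_{a,\lambda}(u)=0$. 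Then one passes to general test functions by density.

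The core of the proof is to upgrade the weak convergence to strong convergence, and the natural route is to test with $u_{n}$ and $u$ themselves. First I would write
$$
o_{n}(1)=\mathcal I'_{a,\lambda}(u_{n})[u_{n}]-\mathcal I'_{a,\lambda}(u_{n})[u],
$$
which, after expanding, reads
$$
\|u_{n}\|^{2}-\langle u_{n},u\rangle
+\int_{\mathbb R^{3}}Q(x)\phi^{a}_{u_{n}}u_{n}(u_{n}-u)
-\int_{\mathbb R^{3}}(|u_{n}|^{4}u_{n}-|u_{n}|^{4}u_{n}\cdot\tfrac{u}{u_{n}}\cdots)
$$
— more cleanly: subtract the term $\int|u_{n}|^{4}u_{n}u$. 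The nonlocal term $\int Q(x)\phi^{a}_{u_{n}}u_{n}(u_{n}-u)\to0$ by \emph{vii-d)} of Lemma~\ref{lemmaphi} (which gives $\int Q\phi^{a}_{u_{n}}u_{n}w\to\int Q\phi^{a}_{u}uw$ for every fixed $w\in H^{1}$, applied with $w=u$, together with $\int Q\phi^{a}_{u_{n}}u_{n}^{2}\to\int Q\phi^{a}_{u}u^{2}$ from \emph{vii-b)}). The sublinear term $\lambda\int K(x)|u_{n}|^{p-1}u_{n}(u_{n}-u)\to0$ is exactly handled by Lemma~\ref{lem:final} (combining $\int K|u_{n}|^{p+1}\to\int K|u|^{p+1}$ and $\int K|u_{n}|^{p-1}u_{n}u\to\int K|u|^{p+1}$). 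For the critical term, Proposition~\ref{prop:6} gives $u_{n}\to u$ strongly in $L^{6}(\mathbb R^{3})$ under the level restriction $c<\tfrac13 S^{3/2}-\lambda^{\frac{2}{1-p}}|K|_{\frac{2}{1-p}}^{\frac{2}{1-p}}\mathfrak a_{p}$; hence $|u_{n}|^{4}u_{n}\to|u|^{4}u$ in $L^{6/5}(\mathbb R^{3})$, so $\int|u_{n}|^{4}u_{n}(u_{n}-u)\to0$ (pairing an $L^{6/5}$-convergent sequence with the $L^{6}$-bounded $u_{n}-u$, which moreover tends weakly to $0$). Finally $\langle u_{n},u\rangle\to\|u\|^{2}$ by weak convergence. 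Putting everything together yields $\|u_{n}\|^{2}\to\|u\|^{2}$, and since $u_{n}\rightharpoonup u$ in the Hilbert space $H^{1}(\mathbb R^{3})$, convergence of the norms plus weak convergence gives $u_{n}\to u$ strongly.

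The main obstacle — and the reason the whole section up to here is needed — is precisely controlling the critical term: without the concentration-compactness analysis of Lemmas~\ref{lem:nu}--\ref{lem:loc} and Proposition~\ref{prop:6}, the sequence $\{u_{n}\}$ could lose mass either at the concentration points $x_{j}$ or at infinity, and $\int|u_{n}|^{6}$ would fail to converge to $\int|u|^{6}$. The level threshold $c<\tfrac13 S^{3/2}-\lambda^{\frac{2}{1-p}}|K|_{\frac{2}{1-p}}^{\frac{2}{1-p}}\mathfrak a_{p}$ is exactly what rules out both phenomena. Once $L^{6}$-strong convergence is in hand, the argument is the standard Brezis--Lieb-type splitting combined with the compactness of the nonlocal and sublinear nonlinearities, and is routine. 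I would also remark that the limit $u$, being a critical point at level $c\le\liminf\mathcal I_{a,\lambda}(u_{n})=c$ by lower semicontinuity (in fact $\mathcal I_{a,\lambda}(u)=c$ by the strong convergence), is indeed the desired critical point, which completes the verification that $\mathcal I_{a,\lambda}$ satisfies $(PS)_{c}$ below the stated level.
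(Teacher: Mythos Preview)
Your proof is correct and follows essentially the same route as the paper: both compute $\mathcal I'_{a,\lambda}(u_{n})[u_{n}]$ and $\mathcal I'_{a,\lambda}(u_{n})[u]$, use Proposition~\ref{prop:6} for the $L^{6}$-convergence of the critical term, Lemma~\ref{lem:final} for the sublinear term, and \emph{vii-b)}, \emph{vii-d)} of Lemma~\ref{lemmaphi} for the nonlocal term, then subtract to obtain $\|u_{n}\|\to\|u\|$ and conclude strong convergence. Your additional remarks that $u$ is a critical point at level $c$ are correct but not needed for the statement as phrased.
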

\begin{proof}

By using Lemma \ref{lem:final}, the strong convergence  $u_{n}\to u$ in $L^{6}(\mathbb R^{3})$ 
and the convergences in \emph{vii-b)} and \emph{vii-d)}  of Lemma \ref{lemmaphi},
we infer
\begin{eqnarray*}
0 \longleftarrow \mathcal I_{a,\lambda}(u_{n})[u] &=& \langle u_{n}, u \rangle + \int_{\mathbb R^{3}}Q(x) \phi^{a}_{u_{n}} u_{n} u -\int_{\mathbb R^{3}}|u_{n}|^{4} u_{n} u -\lambda \int_{\mathbb R^{3}} K(x) |u_{n}|^{p-1} u_{n} u\nonumber\\
&\longrightarrow& \|u\|^{2} + \int_{\mathbb R^{3}}Q(x) \phi^{a}_{u} u^{2} -\int_{\mathbb R^{3}}|u|^{6}-\lambda \int_{\mathbb R^{3}} K(x) |u|^{p+1}
\end{eqnarray*}
and
\begin{eqnarray*}
0 \longleftarrow \mathcal I_{a,\lambda}(u_{n})[u_{n}] &=& \|u_{n}\|^{2} + \int_{\mathbb R^{3}}Q(x) \phi^{a}_{u_{n}} u^{2}_{n} -\int_{\mathbb R^{3}}|u_{n}|^{6}-\lambda \int_{\mathbb R^{3}} K(x) |u_{n}|^{p+1}\\
&=& \|u_{n}\|^{2} + \int_{\mathbb R^{3}}Q(x) \phi^{a}_{u} u^{2} -\int_{\mathbb R^{3}}|u|^{6}-\lambda \int_{\mathbb R^{3}} K(x) |u|^{p+1} +o_{n}(1)
\end{eqnarray*}
Then, by subtracting the above relations we find $\|u_{n}\| \to \|u\|$, which implies the strong convergence of $\{u_{n}\}_{n\in \mathbb N}$ to $u$ completing the proof.
\end{proof}

\section{The minimax scheme}\label{sec:minmax}
In this section we use a topology tool, called Kransnoselki genus, to prove that  the functional
$\mathcal I_{a,\lambda}$ has the right geometric properties in order to apply
classical theorems in Critical Point Theory.
We first recall few general facts. Let $\Sigma$ be the class of closed and symmetric subset 
of an Hilbert space $E$,
$\gamma$ be the Krasnoselkii genus:
$$
\gamma: A\in \Sigma \longmapsto
\begin{cases}
\min\{n\in \mathbb N: \exists h\in C(A;\mathbb R^{n}\setminus \{0\}), h \text{ odd } \},\\
+\infty &\mbox{ if } \{\ldots\} =\emptyset, \\
0&\mbox{ if } A =\emptyset.
\end{cases}
$$

Let us recall the following properties of the Krasnoselskii genus that we will use in the following.
For more details see \cite{Rabinowitz}.
\begin{proposition}
Let $A,B\in \Sigma$. Then
 \begin{enumerate}
 \item if there is a continuous and odd function from $A$ to $B$, then $\gamma(A)\leq \gamma(B)$, \smallskip
 \item if $A\subset B$, then $\gamma(A)\leq \gamma(B)$, \smallskip
 \item if there is an odd homeomorphism between $A$ and $B$, then $\gamma(A)=\gamma(B)$, \smallskip
 \item if $\mathbb S^{N-1}$ is the unit sphere in $\mathbb R^{N}$, then $\gamma(\mathbb S^{N-1})=N$, \smallskip
 \item $\gamma(A\cup B)\leq \gamma(A)+\gamma(B)$, \smallskip
 \item if $\gamma(B)<+\infty$, then $\gamma(\overline{A\setminus B})\geq \gamma(A) - \gamma(B)$, \smallskip
 \item if $A$ is compact, then $\gamma(A)<+\infty$ and there is  
 $N_{\delta}(A)= \{ x\in E: d(x,A)\leq \delta\}$ with $\gamma(A)=\gamma(N_{\delta}(A))$,
 \item if $\gamma(A)\geq 2,$ then the set $A$ has infinitely many points.
 \end{enumerate}
\end{proposition}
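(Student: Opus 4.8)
I would follow the classical development of these facts (see \cite{Rabinowitz}); here is the plan. The whole proposition, except one point, rests on the trivial remark that odd continuous maps compose. If $f\colon A\to B$ is continuous and odd and $h\colon B\to\mathbb R^{n}\setminus\{0\}$ is continuous and odd with $n=\gamma(B)$, then $h\circ f$ realizes $\gamma(A)\le n=\gamma(B)$: this is item (1), item (2) is its special case for the inclusion $A\hookrightarrow B$, and item (3) follows by applying (1) to an odd homeomorphism and to its inverse. For (4) the identity map gives $\gamma(\mathbb S^{N-1})\le N$ at once; the reverse inequality $\gamma(\mathbb S^{N-1})\ge N$, i.e.\ the non-existence of a continuous odd map $\mathbb S^{N-1}\to\mathbb S^{N-2}$, is precisely the Borsuk--Ulam theorem, and this is the one genuinely nontrivial ingredient — everything else is soft point-set topology built on it.

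For the subadditivity (5) the key device is an \emph{odd} Tietze extension: if $A\in\Sigma$ and $f=(f_{1},\dots,f_{m})\colon A\to\mathbb R^{m}$ is continuous and odd, extend each component to $\bar f_{i}\in C(E;\mathbb R)$ by the Tietze theorem and set $f_{i}^{*}(x):=\tfrac12\bigl(\bar f_{i}(x)-\bar f_{i}(-x)\bigr)$, which is continuous, odd, and still equals $f_{i}$ on $A$. Applying this to maps realizing $\gamma(A)=m$ and $\gamma(B)=n$ and forming $(f^{*},g^{*})\colon A\cup B\to\mathbb R^{m+n}$, one obtains a continuous odd map whose first block is nonzero on $A$ and whose second block is nonzero on $B$, hence nonvanishing on $A\cup B$; thus $\gamma(A\cup B)\le m+n$. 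Item (6) is then purely formal: from $A\subseteq\overline{A\setminus B}\cup B$ together with (2) and (5) one gets $\gamma(A)\le\gamma(\overline{A\setminus B})+\gamma(B)$, and since $\gamma(B)<+\infty$ this rearranges into $\gamma(\overline{A\setminus B})\ge\gamma(A)-\gamma(B)$.

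For (7), note that a compact $A$ relevant here does not contain the origin (otherwise $\gamma(A)=+\infty$), so for each $x\in A$ one chooses $r_{x}>0$ with $B_{r_{x}}(x)\cap B_{r_{x}}(-x)=\emptyset$, extracts a finite subcover $\{B_{r_{i}/2}(x_{i})\}_{i=1}^{n}$ of $A$, picks Urysohn functions $\pi_{i}$ that are $1$ on $B_{r_{i}/2}(x_{i})$ and vanish outside $B_{r_{i}}(x_{i})$, and checks that $h(x)=\bigl(\pi_{1}(x)-\pi_{1}(-x),\dots,\pi_{n}(x)-\pi_{n}(-x)\bigr)$ is continuous, odd, and nonvanishing on $A$, so $\gamma(A)\le n<+\infty$. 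For the neighbourhood part, let $m=\gamma(A)$, take $h\colon A\to\mathbb R^{m}\setminus\{0\}$ realizing it, extend it oddly as above to $\tilde h\in C(E;\mathbb R^{m})$, and put $U=\tilde h^{-1}(\mathbb R^{m}\setminus\{0\})$: this is open and symmetric and contains the compact $A$, so $N_{\delta}(A)\subseteq U$ for $\delta$ small; then $\tilde h$ restricted to $N_{\delta}(A)$ is a continuous odd map into $\mathbb R^{m}\setminus\{0\}$, and combined with monotonicity this yields $\gamma(A)\le\gamma(N_{\delta}(A))\le m=\gamma(A)$, i.e.\ equality. Finally (8): if such an $A$ were finite, write $A=\{\pm a_{1},\dots,\pm a_{k}\}$ (with $0\notin A$, the $a_{i}$ and $-a_{i}$ all distinct) and define $h\colon A\to\mathbb R\setminus\{0\}$ by $h(\pm a_{i})=\pm1$; this is continuous and odd, so $\gamma(A)\le1$, contradicting $\gamma(A)\ge2$. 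The only real obstacle in this whole proof is the Borsuk--Ulam step in (4); everything else reduces to composing odd maps and anti-symmetrizing Tietze/Urysohn extensions.
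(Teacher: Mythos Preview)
Your proof is correct and follows the standard argument (essentially the one in \cite{Rabinowitz} that you cite). Note, however, that the paper does not prove this proposition at all: it is stated without proof, with the remark ``For more details see \cite{Rabinowitz}.'' So there is nothing to compare against beyond observing that your sketch is precisely the classical development the reference contains, including the identification of Borsuk--Ulam as the only nontrivial ingredient.
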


Now, using the H\"{o}lder and Sobolev inequality, we find  
\begin{eqnarray*}%\label{app5}
   \nonumber \mathcal{I}_{a,\lambda}(u)
   % \frac{1}{2}\int_{\mathbb{R}^3} \vert \nabla u \vert^{2}+ u^{2}dx + \frac{1}{4}\int_{\mathbb{R}^3} Q(x)\phi_{u}^{a}u^{2}- \frac{1}{6}\int_{\mathbb{R}^3} \vert u\vert^{6}- \frac{\lambda}{p+1}\int_{\mathbb{R}^3} K(x)\vert u \vert^{p+1}\\
   \nonumber &\geq &   \frac{1}{2}\int_{\mathbb{R}^3}( \vert \nabla u \vert^{2}+ u^{2})- \frac{1}{6}\int_{\mathbb{R}^3} \vert u\vert^{6}- \frac{\lambda}{p+1}\int_{\mathbb{R}^3} K(x)\vert u \vert^{p+1}\\
    & \geq & \frac{1}{2} \Vert u\Vert^{2} -\frac{1}{6} \vert u \vert^{6}_{6}-\frac{\lambda}{p+1} \vert K \vert_{\frac{2}{1-p}} \vert u \vert^{p+1}_{2}\\
%    &\geq & \frac{1}{2}\Vert u\Vert^{2} -\frac{1}{6S^{3}} \Vert u\Vert^{6}-\lambda\frac{\vert K \vert_{\frac{2}{1-p}}C_{2}^{p+1}}{p+1}\Vert u\Vert^{p+1} .
        &\geq & \frac{1}{2}\Vert u\Vert^{2} -\frac{1}{6S^{3}} \Vert u\Vert^{6}-\frac{\lambda }{p+1}\vert K \vert_{\frac{2}{1-p}}\Vert u\Vert^{p+1} .
\end{eqnarray*}

As in \cite{AzP}, we consider the auxiliary function 
\begin{equation*}
%    f_{\lambda}(t)=\frac{1}{2}t^{2}-\frac{1}{6S^{3}}t^{6}-\lambda \frac{\vert K \vert_{\frac{2}{1-p}}C_{2}^{p+1}}{p+1} t^{p+1},
        f_{\lambda}(t)=\frac{1}{2}t^{2}-\frac{1}{6S^{3}}t^{6}- \frac{\lambda }{p+1} \vert K \vert_{\frac{2}{1-p}}t^{p+1},
    \quad t>0.
\end{equation*}
Straightforward computations, being $p\in (0,1)$, show that, setting
\begin{equation}\label{eq:L1}
%\Lambda_{1} : =  \frac{2(1+p)}{(5-p)|K|_{\frac{2}{1-p}} C_{2}^{1+p}}\Big( \frac{3(1-p)S^{3}}{5-p} \Big)^{\frac{1-p}{4}} 
\Lambda_{1} : =  \frac{2(1+p)}{(5-p)|K|_{\frac{2}{1-p}} }\Big( \frac{3(1-p)S^{3}}{5-p} \Big)^{\frac{1-p}{4}} 
\end{equation} 
%\textcolor{red}{
%\begin{equation}\label{eq:b}
%\lambda =  \frac{2(1+p)}{(5-p)|K|_{\frac{2}{1-p}} C_{2}^{1+p}}\Big( \frac{3(1-p)S^{3}}{5-p} \Big)^{\frac{1-p}{4}} =:
%\frac{1}{|K|_{\frac{2}{1-p}}} \mathfrak b_{p} 
%\end{equation}
%}
the function
$f_{\Lambda_{1}}$ has its unique maximum in $(0,+\infty)$ at the zero level.
Hence, for 
$\lambda\in(0,\Lambda_{1})$  the function $f_{\lambda}$  has 
 a unique  maximum at a strictly positive level, and has two zeroes:
let us call them $0<t_{\lambda,0}^{*}<t_{\lambda,0}^{**}$. 
%Then
 %$$\textcolor{red}{NON SERVE : \ \ \ \mathcal{I}_{a}(u)\geq f(\Vert u\Vert).}$$

Now define  a truncated functional  as follows:
\begin{equation*}
\mathcal{\widetilde{I}}_{a,\lambda}(u):= \frac{1}{2} \|u\|^{2}+ \frac{1}{4} \int_{\mathbb{R}^3}Q(x) \phi^{a}_{u}u^{2}dx- \frac{\tau(\|u\|)}{6} |u|_{6}^{6}-\frac{\lambda}{p+1}\int_{\mathbb{R}^3} K(x) \vert u\vert^{p+1}
\end{equation*}
 where 
  $ \tau \in C^{\infty}(\mathbb R^{3}; [0,1])$ is decreasing and  such that
$$
\tau(\Vert u \Vert) =
 \begin{cases}
   1, \ \ \mbox{ if } \| u \|\leq t_{\lambda,0}^{*}, \smallskip\\
    0, \ \  \mbox{ if } \| u \|\geq t_{\lambda,0}^{**},  
 \end{cases}
 $$
 and then,
 $$\mathcal{\widetilde{I}}_{a,\lambda}(u)\geq \widetilde{f_{\lambda}}(\Vert u \Vert), 
 \quad\text{with } \ 
%   \widetilde{ f}(t)=\frac{1}{2}t^{2}- \frac{\tau(t)}{6S^{3}}t^{6}-\lambda \frac{\vert K \vert_{\frac{2}{1-p}}C_{2}^{p+1}}{p+1} t^{p+1}.
      \widetilde{ f}_{\lambda}(t)=\frac{1}{2}t^{2}- \frac{\tau(t)}{6S^{3}}t^{6}- \frac{\lambda }{p+1} \vert K \vert_{\frac{2}{1-p}}t^{p+1}.
$$
Note that $\mathcal{\widetilde{I}}_{a,\lambda}\in C^{1}$, is bounded from below and 
 if $\mathcal{\widetilde{I}}_{a,\lambda}(u)<0$ then it is $\Vert u\Vert \leq t_{\lambda,0}^{*}$, implying
 $\mathcal{\widetilde{I}}_{a,\lambda}(u)=\mathcal{I}_{a,\lambda}(u)$.
Hence we are reduced to  find critical points for $\widetilde{ \mathcal I}_{a,\lambda}$ at negative energy level.

To this aim, let us introduce a minimax scheme.
%Let 
 %$$\Sigma=\{A \subset H^{1}(\mathbb{R}^3) \setminus\{0\}: A \ is \ closet, A=-A\}.$$

%We state the following result, it will be useful in the test of the following Lemma. For more details see in 
%Let $\Sigma$ the class of closed and symmetric subset of $H^1(\mathbb{R}^3)$,
%$\gamma$ be the Krasnoselkii genus.
Consider for $n\in \mathbb N, n\geq1, d\in \mathbb R$
 \begin{eqnarray*}\label{app7}
   &&  \nonumber \Sigma_{n}:=\{A \in\Sigma : \gamma(A)\geq n\},\\
    && \textrm K_{d}:=\{u \in H^{1}(\mathbb{R}^3): \mathcal{\widetilde{I}}_{a,\lambda}(u)=d, \ \mathcal{\widetilde{I}'}_{a,\lambda}(u)=0 \},\\
     &&\nonumber \mathcal{\widetilde{I}}_{a}^{d}:= \{u \in H^{1}(\mathbb{R}^3):\mathcal{\widetilde{I}}_{a,\lambda}(u)\leq d\},
 \end{eqnarray*}
 and define% for each $j\in \mathbb{N}$, 
 \begin{equation}\label{app8}
     b_{a,\lambda,n}:=\inf_{A\in \Sigma_n}\sup_{u \in A} \mathcal{\widetilde{I}}_{a,\lambda}(u)
 \end{equation}
 which satisfy evidently $-\infty<b_{a,\lambda,1}\leq b_{a,\lambda,2}\leq b_{a,\lambda,3} \leq \ldots$
The strict inequality holds being the functional bounded below.
 
Arguing as in  \cite{AzP} we get
the existence of sublevels  of the functional with arbitrary large genus.
\begin{lemma}\label{n17}
%\textcolor{blue}{  Let $\lambda\in (0,\Lambda)$. }
Given $n\in \mathbb{N},$ there is $\varepsilon=\varepsilon(n)>0$ satisfying $\gamma(\mathcal{\widetilde{I}}_{a,\lambda}^{-\varepsilon})\geq n.$
%\textcolor{red}{   Moreover
%       $$ -\infty< b_1\leq \ldots b_{j-1}\leq b_{j}= \inf_{A\in \Sigma_j}\sup_{u \in A} \mathcal{\widetilde{I}}_{a}(u)\leq \sup_{u\in \mathcal{\widetilde{I}}_{a}^{-\varepsilon}} \mathcal{\widetilde{I}}_{a}(u)\leq -\varepsilon<0,\quad \forall j\geq2.$$}
\end{lemma}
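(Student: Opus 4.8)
The plan is to locate, inside the sublevel set $\mathcal{\widetilde{I}}_{a,\lambda}^{-\varepsilon}$, a subset that is homeomorphic through an odd homeomorphism to a sphere $\mathbb S^{n-1}$, and then to conclude by monotonicity of the genus together with the normalization $\gamma(\mathbb S^{n-1})=n$. The first — and, I expect, only delicate — step is to construct a suitable $n$-dimensional subspace of $H^1(\mathbb R^3)$. Exploiting hypothesis \eqref{W}, i.e. $\mathrm{meas}\{K>0\}>0$, I would choose a constant $c_0>0$ with $\mathrm{meas}\{K\geq c_0\}>0$, pick $n$ distinct Lebesgue density points of (a bounded part of) $\{K\geq c_0\}$, and take pairwise disjoint balls $\Omega_1,\dots,\Omega_n$ centred at them; each $\Omega_i$ then meets $\{K\geq c_0\}$ in a set of positive measure. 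Inside each $\Omega_i$ I would mollify the characteristic function of such a positive-measure piece, obtaining $\varphi_i\in C_c^\infty(\Omega_i)$, $\varphi_i\geq0$, with $\beta_i:=\int_{\mathbb R^3}K|\varphi_i|^{p+1}>0$ — the positivity of the integral being preserved along the mollification since $K\in L^{2/(1-p)}(\mathbb R^3)\subset L^1_{loc}(\mathbb R^3)$. I then set $E_n:=\mathrm{span}\{\varphi_1,\dots,\varphi_n\}$, which is $n$-dimensional because the $\varphi_i$ have disjoint supports.

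On $E_n$ the disjointness of supports gives, for $u=\sum_i c_i\varphi_i$, the pointwise identity $|u|^{p+1}=\sum_i|c_i\varphi_i|^{p+1}$, hence
$$\int_{\mathbb R^3}K|u|^{p+1}=\sum_{i=1}^n|c_i|^{p+1}\beta_i>0\qquad\text{for every }u\in E_n\setminus\{0\}.$$
Since $E_n$ is finite-dimensional all norms on it are equivalent and its unit sphere is compact, so there is a constant $\alpha=\alpha(n)>0$ with $\int_{\mathbb R^3}K|u|^{p+1}\geq\alpha\|u\|^{p+1}$ for all $u\in E_n$.

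Next I would estimate $\mathcal{\widetilde{I}}_{a,\lambda}$ on the sphere $S_r:=\{u\in E_n:\|u\|=r\}$. Discarding the nonnegative term $\frac{\tau(\|u\|)}{6}|u|_6^6$, bounding $\int_{\mathbb R^3}Q(x)\phi_u^a u^2\leq C\|u\|^4$ by \eqref{v:lemmaphi} of Lemma \ref{lemmaphi} (with $C$ independent of $a$), and using the lower bound just obtained, one gets for every $u\in S_r$
$$\mathcal{\widetilde{I}}_{a,\lambda}(u)\leq\frac12 r^2+\frac{C}{4}r^4-\frac{\lambda\alpha}{p+1}\,r^{p+1}.$$
Since $p+1\in(1,2)$, the term $r^{p+1}$ dominates as $r\to0^+$, so the right-hand side is strictly negative for $r>0$ sufficiently small; fixing such an $r=r_n$ and putting $\varepsilon=\varepsilon(n):=-\bigl(\tfrac12 r_n^2+\tfrac{C}{4}r_n^4-\tfrac{\lambda\alpha}{p+1}r_n^{p+1}\bigr)>0$ gives $S_{r_n}\subset\mathcal{\widetilde{I}}_{a,\lambda}^{-\varepsilon}$.

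Finally, $S_{r_n}$ is the sphere of the norm $\|\cdot\|$ restricted to $E_n\cong\mathbb R^n$, hence odd-homeomorphic to $\mathbb S^{n-1}$, so by the properties of the Krasnoselskii genus recalled above (monotonicity, invariance under odd homeomorphisms, and $\gamma(\mathbb S^{n-1})=n$) one concludes $\gamma(\mathcal{\widetilde{I}}_{a,\lambda}^{-\varepsilon})\geq\gamma(S_{r_n})=n$, which is the claim. The main obstacle is exactly the construction of $E_n$: because $K$ may change sign, $u\mapsto\int_{\mathbb R^3}K|u|^{p+1}$ need not be positive definite on a generic subspace, and it is the disjoint-support construction — where hypothesis \eqref{W} is genuinely used — that repairs this; once $E_n$ is in place, everything else is routine. (Note that the whole argument is insensitive to $a\geq0$, since the coupling term only enters through the $O(\|u\|^4)$ bound, so the very same proof gives the statement for the Schrödinger–Poisson case $a=0$.)
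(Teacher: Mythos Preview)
Your proof is correct and follows essentially the same strategy as the paper's: build an $n$-dimensional subspace from disjoint-support test functions on which $\int K|u|^{p+1}>0$, exploit the sublinearity $p+1<2$ so that the functional is negative on a small enough sphere in that subspace, and conclude via $\gamma(\mathbb S^{n-1})=n$ and monotonicity. Your construction of $E_n$ is in fact a little more careful than the paper's---which simply asserts the existence of smooth functions ``supported in $\{K>0\}$'' without saying why such supports exist when $\{K>0\}$ need not be open---whereas you only demand $\int K|\varphi_i|^{p+1}>0$, and your discarding of the truncated critical term via $\tau\geq 0$ is a harmless shortcut compared to the paper's choice $t<t_{\lambda,0}^{*}$ forcing $\tau=1$.
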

\begin{proof}
% \textcolor{red}{ $\varepsilon$ e $t_{*}$ dependono on $\lambda$ e vanno a zero per lambda sempre più piccolo,
 %come si vede dal grafico di quest funzione perché é decrescente con lambda. Ma questo non é un problema,
 %lambda qui é fissato}
    Since $\mathcal{\widetilde{I}}_{a,\lambda}$ is continuous and even, it is for every $n\in \mathbb N,  \mathcal{\widetilde{I}}_{a,\lambda}^{-\varepsilon} \in\Sigma_{n}.$

    Fix any $n\in \mathbb{N}$ and let $F_n$ be the $n-$dimensional subspace of $H^{1}(\mathbb{R}^3)$
    spanned by $n$ smooth  functions supported in $\{x\in \mathbb R^{3}: K(x)>0 \}$ with disjoint supports.
Taking $z\in F_n$ with $\Vert z \Vert=1$ and $t\in (0,t_{\lambda,0}^{*})$, it is $\tau(\|t z\|) =1$
hence (recall \eqref{v:lemmaphi} of Lemma \ref{lemmaphi})
\begin{eqnarray*}\label{app6}
    \nonumber\mathcal{\widetilde{I}}_{a,\lambda}(t z)&=& \mathcal I_{a,\lambda}(t z) \nonumber\\
    &=&
    \frac{t^2}{2}\|z\|^{2} + \frac{t^4}{4}\int_{\mathbb{R}^3} Q(x)\phi_{z}^{a}z^2 -\lambda\frac{t^{p+1}}{p+1}\int_{\mathbb{R}^3} K(x) \vert z \vert^{p+1}
    -\frac{t^6}{6}\int_{\mathbb{R}^3} \vert z\vert^6\nonumber \\
    &\leq& c_{1}t^{2} + c_{2}t^{4} -\lambda\frac{t^{p+1}}{p+1}\int_{\{x : \, K(x)>0 \}} K(x) \vert z \vert^{p+1}    -\frac{t^6}{6}\int_{\{x : \, K(x)>0 \}} \vert z\vert^6.
        \end{eqnarray*}
        Now in $F_{n}$ all the norms are equivalent, hence
        \begin{eqnarray*}
        &\inf_{z\in F_{n}, \|z\| =1} |z|_{L^{6}(\{x : \, K(x)>0 \})}^{6} >0\\
        &\inf_{z\in F_{n}, \|z\| =1} \displaystyle\int_{\{x : \, K(x)>0 \}} K(x) |z|^{p+1}>0
        \end{eqnarray*}
and        consequently, for suitable constants $c_{3}, c_{4}>0$
       \begin{eqnarray*}
\widetilde{\mathcal I}_{a,\lambda}(tz)  &\leq& c_{1} t^2+ c_{2}t^4 -\lambda c_{3} t^{p+1}-c_{4}t^6. %\\
%  &=&\frac12 t^{2} +c_{1}t^{4} -\lambda c_{2}t^{p+1} -c_{3}t^{6}
    \end{eqnarray*}

 Since  $p\in(0,1)$, there is $\varepsilon>0, \overline t\in(0,t_{\lambda,0}^{*})$ 
 such that 
 $\sup_{z\in F_{n}, \|z\|=1}\widetilde{\mathcal I}_{a,\lambda}(\overline t z)\leq -\varepsilon$; hence
$n=\gamma \{z\in F_{n}: \|z\|=\overline t \} 
\leq  \gamma  ( \widetilde{\mathcal I}_{a,\lambda}^{-\varepsilon})$
%
%\textcolor{red}{Finally since  $\widetilde{f}(\Vert u \Vert )\leq \mathcal{\widetilde{I}}_{a}(u)$  and $\widetilde{f}$ is  bounded below
%(for every $\lambda>0$), we have
%    $$ -\infty< b_1\leq \ldots b_{j-1}\leq b_{j}= \inf_{A\in \Sigma_j}\sup_{u \in A} \mathcal{\widetilde{I}}_{a}(u)\leq \sup_{u\in \mathcal{\widetilde{I}}_{a}^{-\varepsilon}} \mathcal{\widetilde{I}}_{a}(u)\leq -\varepsilon<0,\quad \forall j\geq2$$}
which concludes the proof.
\end{proof}

For $\lambda>0$ such that $\frac{1}{3}S^{\frac32}-\lambda^{\frac{2}{1-p}} |K|_{\frac{2}{1-p}}^{\frac{2}{1-p}}\mathfrak a_{p}>0$, namely
\begin{equation}\label{eq:L2}
0<\lambda< \frac{1}{|K|_{\frac{2}{1-p}}}\Big(\frac{1}{3 \mathfrak a_{p}}S^{\frac32}\Big)^{\frac{1-p}{2}}=:\Lambda_{2}
\end{equation} 
%(with $\mathfrak a_{p}$  defined in \eqref{eq:a})
we guarantee, by Proposition \ref{cor:PS}, that the $(PS)$ condition is satisfied at negative
energy levels for $\mathcal I_{a,\lambda}$, then also for $\widetilde{\mathcal I}_{a,\lambda}$. So let now 
%%%%%%%%%%%%%%%%%%%%%%%%%%%%%%%%%%%%
%%%%%% If C_{2} =1 then Lambda_{1}<\Lambda_{2} as it is seen by plotting the graphs
%%%%%%%%%%%%%%%%%%%%%%%%%%%%%%%%%%%%
$$\Lambda:=\min\Big\{
 \Lambda_{1}, \Lambda_{2}
 %\frac{1}{|K|_{\frac{2}{1-p}}}\mathfrak b_{p}, \frac{1}{|K|_{\frac{2}{1-p}}}\Big(\frac{1}{3 \mathfrak a_{p}}S^{\frac32}\Big)^{\frac{1-p}{2}} 
 \Big\},$$
(that of course does not depend on the parameter $a\geq0$), where $\Lambda_{1}, \Lambda_{2}$
%where $\mathfrak a_{p}$ and $\mathfrak b_{p}$ 
are defined in \eqref{eq:L1} and \eqref{eq:L2}.
In this way, for $\lambda\in(0,\Lambda)$ we guarantee for $\mathcal I_{a,\lambda}$ the $(PS)$ condition at negative energy 
levels as well as the minimax scheme.
\begin{lemma}\label{no18}
    Assume $\lambda \in (0,\Lambda)$ and let  $n\geq1, r\geq0 $ such that
    $$b:=b_{a,\lambda,n}=b_{a,\lambda,n+1}=...=b_{a, \lambda, n+r}.$$
   Then $ \gamma(\emph K_b)\geq n+1.$ In particular $b_{a,\lambda,n}$ are critical levels.
\end{lemma}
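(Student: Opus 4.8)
The plan is to use the standard deformation-lemma argument from Krasnoselskii genus theory (as in \cite{Rabinowitz}), adapted to the truncated functional $\widetilde{\mathcal I}_{a,\lambda}$. First I would record the two ingredients that make the machine run: since $\lambda\in(0,\Lambda)\subset(0,\Lambda_2)$, Proposition \ref{cor:PS} guarantees that $\mathcal I_{a,\lambda}$ satisfies the $(PS)_c$ condition at every level $c<\frac13 S^{3/2}-\lambda^{\frac{2}{1-p}}|K|_{\frac{2}{1-p}}^{\frac{2}{1-p}}\mathfrak a_p$, in particular at all negative levels; and because a negative value of $\widetilde{\mathcal I}_{a,\lambda}$ forces $\|u\|\le t^{*}_{\lambda,0}$, where the truncation is inactive, any $(PS)$ sequence for $\widetilde{\mathcal I}_{a,\lambda}$ at a negative level is eventually a $(PS)$ sequence for $\mathcal I_{a,\lambda}$, so $\widetilde{\mathcal I}_{a,\lambda}$ also satisfies $(PS)_b$. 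Moreover $b=b_{a,\lambda,n}<0$: indeed by Lemma \ref{n17} there is $A\in\Sigma_n$ with $A\subset\widetilde{\mathcal I}_{a,\lambda}^{-\varepsilon}$, so $b_{a,\lambda,n}\le-\varepsilon<0$, and $b_{a,\lambda,n}>-\infty$ since $\widetilde{\mathcal I}_{a,\lambda}$ is bounded below.

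Next I would argue by contradiction: suppose $\gamma(\mathrm K_b)\le n$. Since $b<0$ the set $\mathrm K_b$ consists of nontrivial critical points (it does not contain $0$), it is compact by the $(PS)_b$ condition, and it is symmetric because $\widetilde{\mathcal I}_{a,\lambda}$ is even; hence by property (7) of the genus there is a closed symmetric $\delta$-neighbourhood $N:=N_\delta(\mathrm K_b)$ with $\gamma(N)=\gamma(\mathrm K_b)\le n$, and we may take $\delta$ small enough that $N$ avoids $0$. Now apply the equivariant deformation lemma: there exist $\bar\varepsilon>0$ and an odd homeomorphism $\eta:H^1(\mathbb R^3)\to H^1(\mathbb R^3)$ with $\eta\big(\widetilde{\mathcal I}_{a,\lambda}^{\,b+\bar\varepsilon}\setminus N\big)\subset\widetilde{\mathcal I}_{a,\lambda}^{\,b-\bar\varepsilon}$ (this is where the $(PS)_b$ condition and the evenness are used). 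By definition of $b=b_{a,\lambda,n+r}$ choose $A\in\Sigma_{n+r}$ with $\sup_A\widetilde{\mathcal I}_{a,\lambda}\le b+\bar\varepsilon$, i.e. $A\subset\widetilde{\mathcal I}_{a,\lambda}^{\,b+\bar\varepsilon}$. Then $\overline{A\setminus N}\in\Sigma$ and by properties (6) and (2) of the genus,
$$
\gamma\big(\overline{A\setminus N}\big)\ \ge\ \gamma(A)-\gamma(N)\ \ge\ (n+r)-n\ =\ r,
$$
so $\eta\big(\overline{A\setminus N}\big)\in\Sigma_r$ by property (3) (odd homeomorphism preserves genus). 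But $\eta\big(\overline{A\setminus N}\big)\subset\widetilde{\mathcal I}_{a,\lambda}^{\,b-\bar\varepsilon}$, whence $b_{a,\lambda,r}\le b-\bar\varepsilon<b$. Since $r\le n$ and the sequence $b_{a,\lambda,k}$ is nondecreasing, $b_{a,\lambda,r}\le b_{a,\lambda,n}=b$, and actually one gets the contradiction $b\le b-\bar\varepsilon$; one must be slightly careful when $r=0$, in which case $\Sigma_0$ is all of $\Sigma$ and one instead notes directly that $\overline{A\setminus N}$ is nonempty with $\sup\widetilde{\mathcal I}_{a,\lambda}$ pushed below $b$, contradicting that $A$ was taken with $\gamma(A)\ge n+0=n$ and $\sup_A\le b+\bar\varepsilon$ nearly optimal — more cleanly, when $r=0$ the statement $\gamma(\mathrm K_b)\ge n+1$ is the classical fact that $b=b_{a,\lambda,n}$ is a critical level with $\gamma(\mathrm K_b)\ge1$ upgraded by the same deformation argument with $\Sigma_n$ in place of $\Sigma_r$. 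Thus the assumption $\gamma(\mathrm K_b)\le n$ is untenable and $\gamma(\mathrm K_b)\ge n+1$.

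Finally, since $\gamma(\mathrm K_b)\ge n+1\ge2$, property (8) of the genus shows $\mathrm K_b$ is infinite, and in particular nonempty, so each $b_{a,\lambda,n}$ is indeed a critical level of $\widetilde{\mathcal I}_{a,\lambda}$; being negative, it is also a critical level of $\mathcal I_{a,\lambda}$ by the truncation remark. The main obstacle in writing this carefully is bookkeeping the case $r=0$ versus $r\ge1$ and making sure the deformation is produced as an \emph{odd} homeomorphism (so that genus is preserved), together with verifying that $N_\delta(\mathrm K_b)$ can be chosen symmetric and bounded away from $0$ — all of which are standard once the $(PS)_b$ condition at the negative level $b$ is in hand from Proposition \ref{cor:PS}.
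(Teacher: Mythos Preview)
There is a genuine gap in your contradiction step. After assuming $\gamma(\mathrm K_b)\le n$ and producing a set $\eta(\overline{A\setminus N})\in\Sigma_r$ contained in $\widetilde{\mathcal I}_{a,\lambda}^{\,b-\bar\varepsilon}$, you deduce $b_{a,\lambda,r}\le b-\bar\varepsilon$. But this is not a contradiction: by monotonicity one always has $b_{a,\lambda,r}\le b_{a,\lambda,n}=b$ whenever $r\le n$ (and there is no reason to assume $r\le n$ in the first place), so $b_{a,\lambda,r}\le b-\bar\varepsilon$ is perfectly compatible with $b_{a,\lambda,n}=b$. Your line ``one gets the contradiction $b\le b-\bar\varepsilon$'' simply does not follow from the two inequalities you have written down.

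The paper's argument reverses the roles of $n$ and $r$: it assumes for contradiction that $\gamma(\mathrm K_b)\le r$, takes a neighbourhood $U$ with $\gamma(U)\le r$, and from $A\in\Sigma_{n+r}$ obtains $\gamma\big(\eta(\overline{A\setminus U})\big)\ge(n+r)-r=n$, so $\eta(\overline{A\setminus U})\in\Sigma_n$. Since this set lies in $\widetilde{\mathcal I}_{a,\lambda}^{\,b-\delta}$, the very definition of $b_{a,\lambda,n}$ gives $b=b_{a,\lambda,n}\le b-\delta$, which \emph{is} a contradiction. This actually proves $\gamma(\mathrm K_b)\ge r+1$, the standard conclusion in genus theory (cf.\ \cite{Rabinowitz}); the ``$n+1$'' in the statement of the lemma is evidently a misprint for ``$r+1$''. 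Your difficulty with the case $r=0$ is a symptom of the same issue: under the correct contradiction hypothesis $\gamma(\mathrm K_b)\le r=0$ one has $\mathrm K_b=\emptyset$, the deformation pushes all of $A\in\Sigma_n$ below $b-\delta$, and the contradiction $b_{a,\lambda,n}\le b-\delta$ follows without any special handling.
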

\begin{proof}
Assume by contradiction  that $b=b_{a,\lambda, n}=\ldots=b_{a,\lambda,n+r}<0$ and $\gamma(\textrm K_b)\leq r$. Then there is a 
 symmetric and closed set $U$ with $\textrm K_b\subset U$ such that $\gamma(U)=\gamma(\textrm K_b)\leq r$
 and we can assume $U\subset \widetilde{\mathcal I}^{0}_{a,\lambda}$ since $b<0$.
 
Since the Palais-Smale condition holds, by the Deformation Lemma (see e.g.  \cite[Theorem A.4]{Rabinowitz}), there exists an odd homeomorphism 
$    \eta: H^1(\mathbb{R}^3) \rightarrow H^1(\mathbb{R}^3) $
such that $ \eta(\mathcal{\widetilde{I}}_{a,\lambda}^{b+\delta}\setminus U) \subset \mathcal{\widetilde{I}}_{a,\lambda}^{b-\delta}$
for some $\delta>0$, with $0<\delta <-b$. 
From
\begin{equation*}
    b=b_{a,\lambda,n+r}=\inf_{A\in \Sigma_{n+r}}\sup_{u\in A }\mathcal{\widetilde{I}}_{a,\lambda}(u),
\end{equation*}
 there exist $A\in \Sigma_{n+r}$ such that $\sup_{u\in A} \mathcal{\widetilde{I}}_{a,\lambda}(u)< b+\delta<0$, that is, 
 $A \subset \mathcal{\widetilde{I}}_{a,\lambda}^{b+\delta}\subset\widetilde{\mathcal I}_{a,\lambda}^{0}$. Hence  
 \begin{equation}\label{eq:contr}
  \eta(\overline{A\setminus U})\subset \overline{\eta(\mathcal{\widetilde{I}}_{a,\lambda}^{b+\delta}\setminus U) }\subset \mathcal{\widetilde{I}}_{a,\lambda}^{b-\delta} \quad \text{and}\quad \sup_{u\in \eta(\overline{A\setminus U})} \widetilde{\mathcal I}_{a,\lambda}(u) \leq b-\delta.
 \end{equation}
On the other hand, by the  properties of genus, we obtain 
$$ \gamma (\eta(\overline{A\setminus U}))\geq \gamma(\overline{A\setminus U})\geq \gamma(A)- \gamma(U) \geq n+r- r=n$$
so that $\eta (\overline{A\setminus U})\in \Sigma_n$. But then, 
using \eqref{eq:contr}
$$b=b_{a,\lambda,n} \leq \sup_{u\in \eta(\overline{A\setminus U})}\mathcal{\widetilde{I}}_{a,\lambda}(u)\leq b-\delta$$
which is a contradiction, and the proof is finished.
\end{proof}

\section{Proof of the results}\label{sec:Proofs}

\subsubsection{Proof of Theorem \ref{vegeta} and Theorem \ref{th:a=0}}

In view of the results in Section \ref{sec:PS} and Section \ref{sec:minmax} we deduce that
for every fixed $\lambda\in (0,\Lambda)$ and $a>0$
the numbers  $\{b_{a,\lambda,n}\}_{n\in \mathbb N}$ defined in \eqref{app8} 
are negative critical levels for $\widetilde{\mathcal I}_{a,\lambda}$, hence  for $\mathcal I_{a,\lambda}$,
which tends increasing to zero (see Theorem \cite[Theorem 0]{Kaji}). 
Moreover we can always assume by Theorem \cite[Theorem 1]{Kaji}
that the associated critical points $\{u_{a,\lambda,n}\}_{n\in \mathbb N}$ converge to zero
as $n$ goes to infinity.

Since $\mathcal I_{a,\lambda}(|u|) = \mathcal I_{a,\lambda}(u)$
the ground state solution $u_{a,\lambda,1}$ can be assumed positive.

Finally it is easy to see that the sequence of solutions found is compact.
In fact, 
\begin{itemize}
\item if  infinitely many of them are at the same critical level, let us call it $c\in [b_{a,\lambda,1}, 0)$, the
conclusion is obvious since $\textrm K_{c}$ is compact;
\item if  they are at infinitely many critical levels in 
%let us say  \{$\widetilde{\mathcal I}_{a,\lambda}(u_{k})\}_{k\in \mathbb N} \subset 
$[b_{a,\lambda,1},0)$,
%we can extract a subsequence (denoted again with $u_{k}$) such that
%$$\widetilde{\mathcal I}_{a,\lambda}(u_{k})\to c \in [b_{a,\lambda,1},0] \quad \text{and} \quad \widetilde{\mathcal I}_{a,%\lambda}'(u_{k})=0.$$
%\noindent So $\{u_{k}\}$ is 
they  provide a Palais-Smale sequence (of solutions) and we know that such a sequence  is convergent 
by Proposition \ref{cor:PS}. 
\end{itemize}

Moreover by
\begin{eqnarray}
0&>& \mathcal I_{a,\lambda}(u_{a,\lambda,n}) -\frac{1}{6}\mathcal I'_{a,\lambda} (u_{a,\lambda,n})[u_{a,\lambda,n}] 
\nonumber\\
   &=& \frac{1}{3}\| u_{a,\lambda,n}\|^{2}   +\frac{1}{12}\int_{\mathbb{R}^3} Q(x)\phi_{u_{a,\lambda,n}}^{a} u_{a,\lambda,n}^{2}
    - \lambda \frac{5-p}{6(p+1)}  \int_{\mathbb{R}^3} K(x)\vert u_{a,\lambda,n}\vert^{p+1}\nonumber\\
    &\geq& \frac{1}{3}\Vert u_{a,\lambda,n}\Vert^{2}  -\lambda \frac{5-p}{6(p+1)} \vert K\vert_{\frac{2}{1-p}}\vert u_{a,\lambda,n}\vert^{p+1}_{2}\nonumber \\
%&\geq& \frac{1}{3}\| u_{a,\lambda,n}\|^{2} - \lambda \frac{5-p}{6(p+1)} |K|_{\frac{2}{1-p}}C_{2}^{p+1}\|u_{a,\lambda,n}\|^{p+1},
&\geq& \frac{1}{3}\| u_{a,\lambda,n}\|^{2} - \lambda \frac{5-p}{6(p+1)} |K|_{\frac{2}{1-p}}\|u_{a,\lambda,n}\|^{p+1},
\label{eq:Ibd}
\end{eqnarray}
 the estimate on the norm of the solutions  follows, and consequently
  the squeezing.

This proves Theorem \ref{vegeta} and Theorem \ref{th:a=0}, since everything is true also when 
 $a=0$.

\subsubsection{Proof of Theorem \ref{th:} }

Indeed we have, for all $u_{a,\lambda}\in \mathcal S_{a,\lambda}$, by the same computation as above
\begin{eqnarray*}
\mathcal I_{a,\lambda}(u_{a,\lambda})&=& \mathcal I_{a,\lambda}(u_{a,\lambda}) -\frac{1}{6}\mathcal I'_{a,\lambda} (u_{a,\lambda})[u_{a,\lambda}] 
\nonumber\\
%&\geq& \frac{1}{3}\| u_{a,\lambda}\|^{2} - \lambda \frac{5-p}{6(p+1)} |K|_{\frac{2}{1-p}}C_{2}^{p+1}\|u_{a,\lambda}\|^{p+1}.
&\geq& \frac{1}{3}\| u_{a,\lambda}\|^{2} - \lambda \frac{5-p}{6(p+1)} |K|_{\frac{2}{1-p}}\|u_{a,\lambda}\|^{p+1}.
\end{eqnarray*}
Then by minimizing   the function $g(t) = c_{1}t^{2} - \lambda c_{2}t^{p+1}$
we see that
\begin{eqnarray*}
\inf_{S_{a,\lambda}} \mathcal I_{a,\lambda} &\geq & -  \lambda^{\frac{2}{1-p}}
\frac{1-p}{3} \left( \frac{5-p}{4}\right)^{\frac{2}{1-p}}(1+p)^{\frac{1+p}{ 1-p}} |K|_{\frac{2}{1-p}}\\
&=:& -\lambda^{\frac{2}{1-p}}  |K|_{\frac{2}{1-p}} \mathfrak c_{p} >-\infty.
\end{eqnarray*}
So that, the functional is bounded below on the solutions, and coercive if $\mathcal S_{a,\lambda}$ is unbounded.

\subsubsection{Proof of Theorem \ref{th:a>0}}
In fact, fixed $a\geq0, \lambda>0$ and taking  solutions $u_{a,\lambda}$,
by the same computations as in \eqref{eq:Ibd} with $\mathcal I_{a,\lambda}(u_{a,\lambda})\leq c$, we see that 
%$$\|u_{a,\lambda}\|^{2} \leq \lambda\frac{5-p}{2(p+1)} |K|_{\frac{2}{1-p}} C_{2}^{p+1} \|u_{a,\lambda} \|^{p+1} + 3c$$
$$\|u_{a,\lambda}\|^{2} \leq \lambda\frac{5-p}{2(p+1)} |K|_{\frac{2}{1-p}}  \|u_{a,\lambda} \|^{p+1} + 3c$$
which proves the first statement.
 Moreover  any sequence of solutions  is a Palais-Smale sequence and,
if it is at an energy level 
$c<\frac{1}{3} S^{\frac32} - \lambda^{\frac{2}{1-p}} |K|^{\frac{2}{1-p}}_{\frac{2}{1-p}}\mathfrak a_{p}$, 
all the results of Section \ref{sec:PS} apply. In particular Proposition \ref{cor:PS} 
implies  that this sequence  is convergent in $H^{1}(\mathbb R^{3})$, and the limit is of course a solution with the same values of $a$ and $\lambda$.

The further uniform estimates on the solutions follows since such solutions form a Palais-Smale sequence
at the level $c$ and then by \eqref{eq:stimac}, with $\mu_{j_{0}} = \zeta_{j_{0}} = 0$, the estimate holds.

\subsubsection{Proof of Theorem \ref{th:squeezing}}

It follows by  the computations as in \eqref{eq:Ibd}, which are true  also for any solution.
So when $c\leq0$  we have the  bound
%$$\|u_{a,\lambda}\|^{1-p} \leq \lambda\frac{5-p}{2(p+1)} |K|_{\frac{2}{1-p}} C_{2}^{p+1} $$
$$\|u_{a,\lambda}\|^{1-p} \leq \lambda\frac{5-p}{2(p+1)} |K|_{\frac{2}{1-p}}  $$
which is uniform in $a\geq0$,  and the squeezing holds for solutions of the Schr\"odinger-Bopp-Podolsky 
system as well for the Schr\"odinger-Poisson system.
The convergences in \eqref{eq:new0} and \eqref{eq:new} are obvious.

\subsubsection{Proof of Theorem \ref{th:convina} and Theorem \ref{th:GS1}}
Take general solutions $\{u_{a,\lambda}\}_{a\in(0,1]}$
and as before we get
%%%%%%%%
%%%%%%%
% caso xi = 4
%%%%%%%
%\begin{eqnarray}
%0>I_{a,\lambda}(u_{a,\lambda})
%&=&\mathcal{I}_{a,\lambda}(u_{a,\lambda})-\frac{1}{4}\mathcal{I}'_{a,\lambda}(u_{a,\lambda})[u_{a,\lambda}] \label{eq:questa}\\
%   &=& \frac{1}{4}\Vert u_{a,\lambda}\Vert^{2}  +\frac{1}{12}\int_{\mathbb{R}^3} \vert u_{a,\lambda}\vert^6
%    - \lambda\left(\frac{1}{p+1}-\frac{1}{4} \right) \int_{\mathbb{R}^3} K(x)\vert u_{a,\lambda}\vert^{p+1} \nonumber \\
%    &>& \frac{1}{4}\Vert u_{a,\lambda}\Vert^{2}  -\lambda \frac{3-p}{4(p+1)} \vert K\vert_{2/(1-p)}\vert u_{a,\lambda}\vert^{p+1}_{2} \nonumber \\
%&\geq&\frac{1}{4}\Vert u_{a,\lambda}\Vert^{2} -\lambda |K|_{2/(1-p)}C^{p+1}_{2}\Vert u_{a,\lambda}\Vert^{p+1}. \nonumber
%\end{eqnarray}
\begin{eqnarray}
\frac{1}{3}S^{\frac32} -\lambda^{\frac{2}{1-p}} |K|_{\frac{2}{1-p}}^{\frac{2}{1-p}}\mathfrak a_{p}
&>&\mathcal{I}_{a,\lambda}(u_{a,\lambda})-\frac{1}{6}\mathcal{I}'_{a,\lambda}(u_{a,\lambda})[u_{a,\lambda}] \label{eq:questa}\\
%&\geq&\frac{1}{3}\Vert u_{a,\lambda}\Vert^{2} -\lambda  \frac{5-p}{6(p+1)}|K|_{\frac{2}{1-p}}C^{p+1}_{2}\Vert u_{a,\lambda}\Vert^{p+1}. \nonumber
&\geq&\frac{1}{3}\Vert u_{a,\lambda}\Vert^{2} -\lambda  \frac{5-p}{6(p+1)}|K|_{\frac{2}{1-p}}\Vert u_{a,\lambda}\Vert^{p+1}. \nonumber
\end{eqnarray}
%and the function $g(t)=c_{1}t^{2} - c_{2}t^{p+1}$ with $p\in(0,1)$ is coercive,
Then  $\{u_{a,\lambda}\}_{a\in(0,1]}$ is bounded in $H^{1}(\mathbb R^{3})$, that 
  joint with \eqref{eq:questa},  gives that $\{\mathcal I_{\lambda, a}(u_{a,\lambda})\}_{a\in(0,1]}$ is bounded too. 
  %So we can assume that $\mathcal{I}_{a,\lambda}(u_{a,\lambda,1})\to b \leq c_{0,\lambda,1}<0$.
Then the family  $\{u_{a, \lambda}\}_{a\in(0,1]}$ ``behaves'' like a  bounded  Palais-Smale sequence,
although   now the functional is varying  with $a$.
%In particular $u_{a, \lambda}\rightharpoonup \overline u$ in $H^{1}(\mathbb R^{3})$ as $a\to0$
However all the procedure made  in Section \ref{sec:PS} can be repeated just with very minor changes
(take $a_{n}\to 0$ and replace $u_{n}$ with $u_{a_{n}, \lambda}$)
and we get the strong convergence, namely, there is $\overline u_{\lambda}\in H^{1}(\mathbb R^{3})$ such that
\begin{equation}\label{eq:strongu}
\lim_{a\to 0} u_{a,\lambda} =  \overline u_{\lambda} \quad \text{ in }\quad H^{1}(\mathbb R^{3}).
\end{equation}

Let us  prove that $ \overline u_{\lambda}$ is a solution of the Schr\"odinger-Poisson equation \eqref{eq:eqSP}.
To this aim, we recall the following important result (see \cite[Lemma 6.1]{dAvGS}).
\begin{lemma}\label{lem:dS}
 Consider $f^0\in  L^{6/5}(\mathbb R^{3})$,  $\{ f^{a}\}_{ a\in (0,1)}\subset  L^{6/5}(\mathbb R^{3})$ and let 
 $$\phi^{0}\in D^{1,2}(\mathbb R^{3}) \hbox{ be the unique solution of }  -\Delta \phi = f^{0} \hbox{ in }\mathbb R^{3}$$
 and
 $$\phi^{a}\in \mathcal D\hbox{ be the unique solution of } -\Delta\phi + a^{2}\Delta^{2}\phi = f^{a}\hbox{ in }\mathbb R^{3}.$$ 
As  $a\to0$ we have:
\begin{enumerate}[label=(\roman*),ref=\roman*]
		\item\label{1ato0} if $f^{a}\rightharpoonup f^{0}$ in $L^{6/5}(\mathbb R^{3})$, then 
		$\phi^{a}\rightharpoonup \phi^{0}$ in $D^{1,2}(\mathbb R^{3})$; \smallskip
		\item\label{2ato0}  if $f^{a}\to f^{0}$ in $L^{6/5}(\mathbb R^{3})$, then $\phi^{a}\to \phi^{0}$ in $D^{1,2}(\mathbb R^{3})$ and $a\Delta \phi^{a} \to 0$ in $L^{2}(\mathbb R^{3})$.
	\end{enumerate}
\end{lemma}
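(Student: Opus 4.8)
The plan is to argue by weak compactness, using as the basic tool the energy identity obtained by testing each equation with its own solution. First I would test $-\Delta\phi^{a}+a^{2}\Delta^{2}\phi^{a}=f^{a}$ with $\phi^{a}$, which gives
$$|\nabla\phi^{a}|_{2}^{2}+a^{2}|\Delta\phi^{a}|_{2}^{2}=\int_{\mathbb R^{3}}f^{a}\phi^{a}\leq |f^{a}|_{6/5}\,|\phi^{a}|_{6}\leq C|f^{a}|_{6/5}\,|\nabla\phi^{a}|_{2},$$
by H\"older and the Sobolev embedding $D^{1,2}(\mathbb R^{3})\hookrightarrow L^{6}(\mathbb R^{3})$. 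Hence $|\nabla\phi^{a}|_{2}\leq C|f^{a}|_{6/5}$ and $a^{2}|\Delta\phi^{a}|_{2}^{2}\leq C|f^{a}|_{6/5}^{2}$. Since a sequence that converges weakly (a fortiori strongly) in $L^{6/5}(\mathbb R^{3})$ is bounded there, along any $a_{n}\to0$ we get, up to a subsequence, $\phi^{a_{n}}\rightharpoonup\tilde\phi$ in $D^{1,2}(\mathbb R^{3})$ for some $\tilde\phi$, and $a_{n}\Delta\phi^{a_{n}}$ bounded in $L^{2}(\mathbb R^{3})$.

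For part \eqref{1ato0} I would then pass to the limit in the weak formulation
$$\int_{\mathbb R^{3}}\nabla\phi^{a}\nabla\varphi+a^{2}\int_{\mathbb R^{3}}\Delta\phi^{a}\Delta\varphi=\int_{\mathbb R^{3}}f^{a}\varphi,\qquad\varphi\in C^{\infty}_{c}(\mathbb R^{3}).$$
The first term converges to $\int\nabla\tilde\phi\nabla\varphi$ since $\nabla\phi^{a_{n}}\rightharpoonup\nabla\tilde\phi$ in $L^{2}(\mathbb R^{3})$; the biharmonic term is bounded by $a_{n}\,|a_{n}\Delta\phi^{a_{n}}|_{2}\,|\Delta\varphi|_{2}\to0$; and the right-hand side converges to $\int f^{0}\varphi$ because $\varphi\in L^{6}(\mathbb R^{3})$ and $f^{a}\rightharpoonup f^{0}$ in $L^{6/5}(\mathbb R^{3})$. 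Thus $-\Delta\tilde\phi=f^{0}$ in distributions, so $\tilde\phi=\phi^{0}$ by uniqueness; as the limit is independent of the subsequence (and of the sequence $a_{n}\to0$), $\phi^{a}\rightharpoonup\phi^{0}$ in $D^{1,2}(\mathbb R^{3})$ as $a\to0$.

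For part \eqref{2ato0}, assuming now $f^{a}\to f^{0}$ strongly in $L^{6/5}(\mathbb R^{3})$, I would combine the energy identity with the weak convergence $\phi^{a}\rightharpoonup\phi^{0}$ in $D^{1,2}(\mathbb R^{3})$, hence in $L^{6}(\mathbb R^{3})$: the pairing $\int f^{a}\phi^{a}$ (strong times weak) converges to $\int f^{0}\phi^{0}$, and testing the limit equation $-\Delta\phi^{0}=f^{0}$ with $\phi^{0}$ itself — legitimate after approximating $\phi^{0}$ by $C^{\infty}_{c}(\mathbb R^{3})$ functions in the $D^{1,2}$-norm, both sides of the identity being continuous in that norm — gives $\int f^{0}\phi^{0}=|\nabla\phi^{0}|_{2}^{2}$. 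Hence
$$\limsup_{a\to0}\big(|\nabla\phi^{a}|_{2}^{2}+a^{2}|\Delta\phi^{a}|_{2}^{2}\big)=|\nabla\phi^{0}|_{2}^{2},$$
while weak lower semicontinuity of the norm gives $\liminf_{a\to0}|\nabla\phi^{a}|_{2}^{2}\geq|\nabla\phi^{0}|_{2}^{2}$. Therefore $|\nabla\phi^{a}|_{2}\to|\nabla\phi^{0}|_{2}$ and $a^{2}|\Delta\phi^{a}|_{2}^{2}\to0$; the first, with the weak convergence, yields $\phi^{a}\to\phi^{0}$ strongly in the Hilbert space $D^{1,2}(\mathbb R^{3})$, and the second is exactly $a\Delta\phi^{a}\to0$ in $L^{2}(\mathbb R^{3})$.

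The only slightly delicate points, which I expect to be the main obstacle, are the justification of using $\phi^{0}$ as a test function in the limit problem (density of $C^{\infty}_{c}(\mathbb R^{3})$ in $D^{1,2}(\mathbb R^{3})$), and the handling of the fourth-order term in the limit: everything hinges on reading $a^{2}\Delta^{2}\phi^{a}$ as $a\cdot(a\Delta\phi^{a})$ with $\{a\Delta\phi^{a}\}$ merely bounded in $L^{2}$, rather than attempting to control $\Delta\phi^{a}$ itself, which does \emph{not} stay bounded as $a\to0$.
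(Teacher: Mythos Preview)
Your proof is correct. The paper does not actually prove this lemma; it merely recalls it and cites \cite[Lemma 6.1]{dAvGS} for the proof. Your argument---testing with $\phi^{a}$ to get uniform $D^{1,2}$ bounds and an $L^{2}$ bound on $a\Delta\phi^{a}$, then identifying the weak limit via the distributional formulation, and finally upgrading to strong convergence in part \eqref{2ato0} via convergence of norms from the energy identity---is the standard and natural route, and is essentially the argument one finds in the cited reference.
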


From this we deduce that
\begin{equation}\label{eq:strongphi}
\lim_{a\to 0} \phi^{a}_{u_{a,\lambda}}= \phi^{0}_{ \overline u_{\lambda}} \ \  \text{ in }  \ D^{1,2}(\mathbb R^{3}) \quad\text{ and }
\quad 
\lim_{a\to 0}a\Delta \phi^{a}_{ u_{a,\lambda}} =0 \ \ \text{ in } \ L^{2}(\mathbb R^{3}),
\end{equation} 
where $-\Delta \phi^{0}_{ \overline u_{\lambda}} = 4\pi  \overline u_{\lambda}^{2}$.
%\begin{lemma}\label{lem:conv}
%If for a certain $\lambda_{*}$ we have that $\{u_{\lambda_{*},a}\}_{a\in (0,1)}$
%is a family of solutions of \eqref{eq:eq} and $\lim_{a\to 0}u_{\lambda_{*},a} = u_{\lambda_{*},0}$ in E,
%then $ u_{\lambda_{*},0}$ is a solution of \eqref{eq:eqSP}. 
%\end{lemma}
%
%\begin{lemma}
%
%\end{lemma}
If now $\varphi\in C_{c}^{\infty}(\mathbb R^{3})$, then
\begin{multline*}\label{eq:}
\int_{\text{supp\,}\varphi}\nabla u_{a,\lambda}\nabla \varphi + \int_{\text{supp\,}\varphi} 
u_{a,\lambda} \varphi +
\int_{\text{supp\,}\varphi}Q(x) \phi^{a}_{u_{a,\lambda}} u_{a,\lambda}\varphi =\\
\lambda \int_{\text{supp\,}\varphi} K(x) |u_{a,\lambda}|^{p-1}u_{a,\lambda} \varphi
+ \int_{\text{supp\,}\varphi} |u_{a,\lambda}|^{4}u_{a,\lambda} \varphi.
\end{multline*}
Passing to the limit as $a\to 0$ we achieve
\begin{multline*}
\int_{\text{supp\,}\varphi}\nabla  \overline u_{\lambda}\nabla \varphi + \int_{\text{supp\,}\varphi} 
 \overline u_{\lambda} \varphi +
\int_{\text{supp\,}\varphi}Q(x) \phi^{0}_{ \overline u_{\lambda}}  \overline u_{\lambda}\varphi =\\
\lambda \int_{\text{supp\,}\varphi} K(x) | \overline u_{\lambda}|^{p-1} \overline u_{\lambda} \varphi
+ \int_{\text{supp\,}\varphi} | \overline u_{\lambda}|^{4} \overline u_{\lambda} \varphi
\end{multline*}
implying that $ \overline u_{\lambda}$ solves \eqref{eq:eqSP}
 and then can be denoted  its own right with $u_{0,\lambda}$; so 
 $$\lim_{a\to 0} u_{a,\lambda} = u_{0,\lambda}\quad \text{ and } \quad \lim_{a\to 0} \phi^{a}_{u_{a,\lambda}} = \phi^{0}_{u_{0,\lambda}}$$
and from these, the  convergences in \eqref{eq:Ia0} follow.
%and the continuity of the maps $u\mapsto \phi^{a}_{u}, u\mapsto \phi^{0}_{u}.$
Similarly  the convergences in \eqref{eq:I'a0} follow and Theorem \ref{th:convina} is proved.

%The estimate in \eqref{eq:stimauniforme} follows by the computation in \eqref{eq:questa} using that 
%$\mathcal I_{a,\lambda}(u_{a,\lambda})<0$.

\medskip

In particular, when we apply the Theorem \ref{th:convina}  to the family of ground states $\{u_{a,\lambda,1}\}_{a\in(0,1]}$
we get that the limit as $a$ goes to $0$ is a solution of \eqref{eq:eqSP}. Let us call it $\overline u_{0,\lambda}$.
But  by \eqref{eq:strongu} and the first limit in \eqref{eq:strongphi}
we have
$$\lim_{a\to 0}\mathcal I_{a,\lambda}(u_{a,\lambda,1}) = \mathcal I_{0,\lambda}(\overline u_{0,\lambda}).$$
On the other hand, since $\mathcal I_{a,\lambda}(u) \leq \mathcal I_{0,\lambda}(u)$
for every $a>0$, we obtain
\begin{eqnarray*}
\mathcal  I_{a,\lambda}(u_{a,\lambda,1})
\leq\mathcal I_{a,\lambda}(u_{0,\lambda,1})
\leq \mathcal I_{0,\lambda}(u_{0,\lambda,1}) 
=b_{0,\lambda,1}
\end{eqnarray*}
and hence,   we infer
\begin{eqnarray*}
\mathcal I_{0,\lambda}(\overline u_{0,\lambda}) = \limsup_{a\to 0}\mathcal{I}_{a,\lambda}(u_{a,\lambda,1}) \leq b_{0,\lambda,1}
\end{eqnarray*}
implying that $\overline u_{0,\lambda}$ is a ground state for \eqref{eq:eqSP}, and then it can be denoted with 
$u_{0,\lambda,1}$. This finishes the proof of Theorem \ref{th:GS1}.

\bigskip

{\bf Acknowledgements.}

H. M. Santos Damian was supported by  CNPq grant n. 140423/2020-6.
G. Siciliano was supported by Capes, CNPq, FAPDF Edital 04/2021 - Demanda Espont\^anea, 
Fapesp grants no. 2022/16407-1 and 2022/16097-2 (Brazil),  
PRIN PNRR, P2022YFAJH “Linear and Nonlinear PDEs: New directions and applications”, and 
INdAM-GNAMPA project n. E5324001950001 ``Critical and limiting phenomena in nonlinear elliptic systems'' (Italy).

\medskip

{\bf Data availability. } Data sharing not applicable to this article as no datasets were generated or analysed during
the current study.

\medskip

{\bf Conflict of interest. }The authors declare that they have no conflict of interest.

\medskip

{\bf Author contributions. } The authors contributed equally to the writing of this article. All authors
read and approved the final manuscript.

\end{document}